\newtheorem{assumption}{Assumption A.\!\!}
\theoremstyle{remark}
\newtheorem*{remark}{Remark}
\newcommand{\R}{\mathbb{R}}
\newcommand{\Rext}{\mathbb{R}\cup\{+\infty\}}
\newcommand{\snorm}[1]{\Vert#1\Vert}
\newcommand{\abs}[1]{\left\vert#1\right\vert}
\newcommand{\set}[1]{\left\{#1\right\}}
\newcommand{\norm}[1]{\Vert #1\Vert}
\newcommand{\argmin}{\mathrm{arg}\!\min}
\newcommand{\Eproof}{\hfill$\square$}
\newcommand{\ri}[1]{\mathrm{ri}\left(#1\right)}
\newcommand{\xb}{x} 
\newcommand{\yb}{y} 
\newcommand{\zb}{z} 
\newcommand{\ub}{u}
\newcommand{\vb}{v} 
\newcommand{\ab}{a}
\newcommand{\Zb}{Z}
\newcommand{\cb}{c}
\newcommand{\Nc}{\mathcal{N}}
\newcommand{\Kc}{\mathcal{K}}
\newcommand{\Xc}{\mathcal{X}}
\newcommand{\Yc}{\mathcal{Y}}
\newcommand{\Zc}{\mathcal{Z}}
\newcommand{\Wc}{\mathcal{W}}
\newcommand{\Db}{D}
\newcommand{\Sb}{\mathbf{S}}
\newcommand{\dom}[1]{\mathrm{dom}\left(#1\right)}
\renewcommand{\vec}[1]{\mathrm{vec}\left(#1\right)}
\newcommand{\Lc}{\mathcal{L}}
\newcommand{\Tc}{\mathcal{T}}
\newcommand{\rb}{r} 
\newcommand{\wb}{w} 
\newcommand{\bb}{b} 
\newcommand{\Ab}{A}
\newcommand{\Bc}{\mathcal{B}}
\newcommand{\Cc}{\mathcal{C}}
\newcommand{\iprod}[1]{\left\langle #1\right\rangle}
\newcommand{\iprods}[1]{\langle #1\rangle}
\newcommand{\dist}[1]{\mathrm{dist}\left(#1\right)}
\newcommand{\prox}{\textrm{prox}}
\newcommand{\xopt}{x^{\star}}
\newcommand{\yopt}{y^{\star}}
\newcommand{\wopt}{w^{\star}}
\newcommand{\Xopt}{\mathcal{X}^{\star}}
\newcommand{\Yopt}{\mathcal{Y}^{\star}}
\newcommand{\Wopt}{\mathcal{W}^{\star}}
\newcommand{\fopt}{f^{\star}}
\newcommand{\Popt}{P^{\star}}
\newcommand{\Dopt}{D^{\star}}
\newcommand{\xbar}{\bar{x}}
\newcommand{\ybar}{\bar{y}}
\newcommand{\zbar}{\bar{z}}
\newcommand{\wbar}{\bar{w}}
\newcommand{\xhat}{\hat{x}}
\newcommand{\yhat}{\hat{y}}
\newcommand{\xtilde}{\tilde{x}}
\newcommand{\xast}{x^{\ast}}
\newcommand{\yast}{y^{\ast}}
\newcommand{\Id}{\mathbb{I}}
\newcommand{\Fc}{\mathcal{F}}
\newcommand{\Sc}{\mathcal{S}}
\newcommand{\TheTitle}{A Smooth Primal-Dual Optimization Framework for Nonsmooth Composite Convex Minimization} 
\newcommand{\TheRunningTitle}{A Smooth Primal-Dual Optimization Framework}
\newcommand{\TheAuthors}{Quoc Tran-Dinh, Olivier Fercoq, Volkan Cevher}
\newcommand{\TheAbAuthors}{Q. Tran-Dinh, O. Fercoq, V. Cevher}
\headers{\TheRunningTitle}{\TheAbAuthors}
\title{{\TheTitle}}
\author{
Quoc Tran-Dinh\thanks{Department of Statistics and Operations Research, University of North Carolina at Chapel Hill (UNC), USA. \newline Email: {\tt quoctd@email.unc.edu}.} 
\and Olivier Fercoq\thanks{LTCI, CNRS, T\'el\'ecom ParisTech, Universit\'e Paris-Saclay, 75013, Paris, France. \newline E-mail: {\tt olivier.fercoq@telecom-paristech.fr}.}
\and Volkan Cevher\thanks{Laboratory  for Information and Inference Systems (LIONS), \'{E}cole Polytechnique F\'{e}d\'{e}rale de Lausanne (EPFL),  CH1015 - Lausanne, Switzerland. E-mail: {\tt volkan.cevher@epfl.ch}.}
}
\begin{document}
\maketitle

\begin{abstract}
We propose a new and low per-iteration complexity first-order primal-dual optimization framework for a convex optimization template with broad applications. 
Our analysis relies on a novel combination of three classic ideas applied to the primal-dual gap function: smoothing, acceleration, and homotopy. 
The algorithms due to the new approach achieve the best-known convergence rate results, in particular when the template consists of only non-smooth functions.  
We also outline a restart strategy for the acceleration to significantly enhance the practical performance. 
We demonstrate relations with the augmented Lagrangian method and show how to exploit the strongly convex objectives with rigorous convergence rate guarantees.
We provide representative examples to illustrate that the new methods can outperform the state-of-the-art,  including  Chambolle-Pock, and the alternating direction method-of-multipliers algorithms.   
We also compare our algorithms with the well-known Nesterov's smoothing method.
\vspace{1ex}

\noindent\textbf{Keywords:}
Gap reduction technique; first-order primal-dual methods; augmented Lagrangian; smoothing techniques; homotopy; separable convex minimization; parallel and distributed computation.
\end{abstract}

% REQUIRED
\vspace{-1ex}
\noindent
\begin{AMS}
90C25, 90C06, 90-08
\end{AMS}
\vspace{-1ex}

\section{Introduction}\label{sec:intro}
We introduce a new analysis framework for designing primal-dual optimization algorithms to obtain numerical solutions to the following convex optimization template  described in the primal space:  
\begin{equation}\label{eq:primal_cvx}
P^{\star} := \min_{\xb\in\R^n}\Big\{ P(\xb) := f(\xb) + g(\Ab\xb) \Big\},
\end{equation}
where $f : \R^n \to \Rext$ and $g : \R^m \to\Rext$ are proper, closed and convex functions, and $\Ab\in\R^{m\times n}$ is given.
For generality, we do not impose any smoothness assumption on $f$ and $g$. In particular, we refer to \eqref{eq:primal_cvx} as a nonsmooth composite minimization problem. 

Associated with the primal problem~\eqref{eq:primal_cvx}, we define the following dual formulation:
\begin{equation}\label{eq:dual_cvx}
D^{\star} := \max_{\yb\in\R^m}\Big\{ D(\yb) := -f^{\ast}(-\Ab^{\top}\yb) - g^{\ast}(\yb) \Big\},
\end{equation}
where $f^{\ast}$ and $g^{\ast}$ are the Fenchel conjugate of $f$ and $g$, respectively.
Clearly, \eqref{eq:dual_cvx} has  the same form as \eqref{eq:primal_cvx} in the dual space.

The templates \eqref{eq:primal_cvx}-\eqref{eq:dual_cvx} provide a unified formulation for a broad set of applications in various disciplines, see, e.g., 
\cite{Bertsekas1989b,Boyd2004,Cevher2014,Chandrasekaranm2012,McCoy2014,Nocedal2006,Wainwright2014}. 
While problem \eqref{eq:primal_cvx} is presented in the unconstrained form, it automatically covers constrained settings by means of indicator functions.
For example, \eqref{eq:primal_cvx}  covers the following prototypical optimization template via $g(\zb) := \delta_{\set{\cb}}(\zb)$ (i.e., the indicator function of the convex set $\set{\cb}$):
\begin{equation}\label{eq:constr_cvx}
\fopt := \min_{\xb\in\R^n} \big\{ f(\xb) + \delta_{\set{\cb}}(\Ab\xb)  \big\} \equiv \min_{\xb\in\R^n} \big\{ f(\xb) \mid \Ab\xb = \cb  \big\},
\end{equation}
where $f$ is a proper, closed and convex function as in \eqref{eq:primal_cvx}. 
Note that \eqref{eq:constr_cvx} is sufficiently general to cover standard convex optimization subclasses, such as conic programming, monotropic programming, 
and geometric programming, as specific instances \cite{BenTal2001,Bertsekas1996d,Boyd2011}. 

Among classical convex optimization methods, the primal-dual approach is perhaps one of the best candidates to solve the primal-dual pair \eqref{eq:primal_cvx}-\eqref{eq:dual_cvx}. 
Theory and methods along this approach have been developed for several decades and have led to a diverse set of algorithms, see, e.g., 
\cite{Bauschke2011,Boyd2011,Chambolle2011,chen2016direct,Chen1994,combettes2012primal,Davis2014a,Davis2014,Davis2014b,deng2013parallel,Esser2010a,Goldstein2013,He2012a,He2012,Lan2013,Lan2013b,lin2015sublinear,Monteiro2011,Monteiro2012b,Monteiro2010,Nesterov2005d,Ouyang2013,Ouyang2014,Shefi2014,Tseng1991a}, and the references quoted therein.
A more thorough comparison between existing primal-dual methods and our approach in this paper is postponed to Section~\ref{sec:comparison}.
There are several reasons for our emphasis on first-order primal-dual methods for \eqref{eq:primal_cvx}-\eqref{eq:dual_cvx}, with the most obvious one being their scalability. 
Coupled with recent demand for low-to-medium accuracy solutions in applications, these methods indeed provide important trade-offs between the per-iteration complexity and the iteration-convergence rate along with the ability to distribute and decentralize the computation.

Unfortunately, the newfound popularity of primal-dual optimization has lead to an explosion in the number of different algorithmic variants, each of which requires different set of assumptions on problem settings or methods, such as strong convexity, error bound conditions, metric regularity, Lipschitz gradient, Kurdyka-{\L}ojasiewicz conditions or penalty parameter tuning \cite{cai2016convergence,lin2015global,lin2015iteration}. 
As a result, the optimal choice of the algorithm for a given application is often unclear as it is not guided by theoretical principles, but rather trial-and-error procedures, which can incur unpredictable computational costs. 
A vast list of key references can be found, e.g., in \cite{Chambolle2011,Shefi2014}.

To this end, we address the following key question: ``Can we construct heuristic-free, accelerated first-order primal-dual methods for nonsmooth composite minimization that have the best-known convergence rate guarantees?'' 
To our best knowledge, this question has never been addressed fully in a unified fashion in this generality. 
Intriguingly, our theory  is still applicable to the smooth cases of $f$ without requiring neither Lipschitz gradient nor strongly convex-type assumption. 
Such a model covers serval important applications, such as graphical learning models and Poisson imaging reconstruction \cite{Tran-Dinh2013a}.

%%%% The role of the primal-dual gap function
\subsection{Our approach} 
Associated with the primal problem \eqref{eq:primal_cvx} and the dual one \eqref{eq:dual_cvx}, we define
\begin{equation}\label{eq:gap_fun}
G(\wb) := P(\xb) - D(\yb),
\end{equation}
as a primal-dual gap function, where $\wb := (\xb, \yb)$ is the concatenated primal-dual variable. 
The gap function $G$ in \eqref{eq:gap_fun} is convex in terms of $w$.  Under strong duality, we have $G(\wb^{\star}) = 0$ if and only if $\wb^{\star} := (\xb^{\star}, \yb^{\star})$ is a primal-dual solution of \eqref{eq:primal_cvx} and \eqref{eq:dual_cvx}. 

The gap function \eqref{eq:gap_fun} is widely used in convex optimization and variational inequalities, see, e.g., \cite{Facchinei2003}.
Several researchers have already used the gap function as a tool to characterize the convergence of  optimization algorithms, e.g., within a variational inequality framework \cite{Chambolle2011,He2012,Ouyang2014}.

In stark contrast with the existing literature, our analysis relies on a novel combination of three ideas applied to the primal-dual gap function: \textit{smoothing, acceleration, and homotopy}. 
While some combinations of these techniques have already been studied in the literature, their full combination is important for the desiderata and has not been studied yet. 

%%% Smoothing.
\textit{Smoothing:} 
We can obtain a smoothed estimate of the gap function within Nesterov's smoothing technique applied to $f$ and $g$ \cite{Beck2012a,Nesterov2005c}. In the sequel, we denote the smoothed  gap function by $G_{\gamma\beta}(\wb) := P_{\beta}(\xb) - D_{\gamma}(\yb)$ to approximate the primal-dual gap function $G(\wb)$, where $P_{\beta}$ is a smoothed approximation to $P$ depending on the smoothness parameter $\beta > 0$, and $D_{\gamma}$ is a smoothed approximation to $D$ depending on the smoothness parameter $\gamma > 0$. 
By smoothed approximation, we mean the same max-form approximation as \cite{Nesterov2005c}.
However, it is still unclear how to properly update these smoothness parameters in primal-dual methods.

%% Acceleration:
\textit{Acceleration:} 
Using an accelerated scheme, we will design new primal-dual decomposition methods that satisfy the following smoothed gap reduction model: 
\begin{equation}\label{eq:gap_reduction_model}
G_{\gamma_{k+1}\beta_{k+1}}(\wbar^{k+1}) \leq  (1-\tau_k)G_{\gamma_k\beta_k}(\wbar^k) + ~\psi_k,
\end{equation}
where  $\{\wbar^k\}$ and the parameters are generated by the algorithms with $\tau_k \in [0, 1)$ and $\set{\max\set{\psi_k, 0}}$ converges to zero. Similar ideas have been proposed before; for instance, Nesterov's excessive gap technique \cite{Nesterov2005d} is a special case of the gap reduction model \eqref{eq:gap_reduction_model} when $\psi_k \leq 0$ (see \cite{TranDinh2014b}).

%% Homotopy:
\textit{Homotopy:} 
We will design algorithms to maintain \eqref{eq:gap_reduction_model} while simultaneously updating $\beta_k$, $\gamma_k$ and $\tau_k$ to zero to achieve the best-known convergence rate based on the assumptions imposed on the problem template. 
This strategy will also allow our theoretical guarantees not to depend on the diameter of the feasible set  of \eqref{eq:constr_cvx}. 
A similar technique is also proposed in \cite{Nesterov2005d}, but only for symmetric primal-dual methods. It is also used in conjunction with Nesterov's smoothing technique in \cite{boct2012variable} for unconstrained problem but had only an $\mathcal{O}(\ln(k)/k)$ convergence rate.

Note that without homotopy, we can directly apply Nesterov's accelerated methods to minimize the smoothed gap function $G_{\gamma\beta}$ for given $\gamma > 0$ and $\beta > 0$. In this case, these smoothness parameters must be fixed a priori depending on the desired accuracy and the prox-diameter of both the primal and dual problems, which may not be applicable to  \eqref{eq:constr_cvx} due to the unboundedness of the dual feasible domain.

%%%% 1.2. Our contribution.
\vspace{-1ex}
\subsection{Our contributions}
Our main contributions  can be summarized as follows:
\begin{itemize}
\item[$\mathrm{(a)}$] (\textit{Theory}) 
We propose to use differentiable smoothing prox function to smooth both primal and dual objective functions, which allows us to update the smoothness parameters in a heuristic-free manner.
We introduce a new model-based gap reduction condition for constructing novel first-order primal-dual methods that can operate in a black-box fashion (in the sense of \cite{Nesterov2004}). 
Our analysis technique unifies several classical concepts in convex optimization, from  Auslander's gap function \cite{Auslender1976} and Nesterov's smoothing technique \cite{Beck2012a,Nesterov2005c} to the accelerated proximal gradient descent method, in a nontrivial manner. 
We also prove a fundamental bound on the primal objective residual and the feasibility violation for \eqref{eq:constr_cvx}, which leads to the main results of our convergence guarantees.

\item[$\mathrm{(b)}$] (\textit{Algorithms and convergence theory}) We propose two novel primal-dual first-order algorithms for solving \eqref{eq:primal_cvx} and \eqref{eq:constr_cvx}. 
The first algorithm requires to perform only one primal step and one dual step without using any primal averaging scheme.
The second algorithm needs one primal step and two dual steps but using a weighted averaging scheme on the primal.
We prove an $\mathcal{O}(1/k)$ convergence rate on the objective residual $P(\xbar^k) - P^{\star}$ of \eqref{eq:primal_cvx} for both algorithms, which is the best-known in the literature for the fully nonsmooth setting. 
For the constrained case \eqref{eq:constr_cvx}, we also prove the convergence of both algorithms in terms of the primal objective residual and the feasibility violation, both achieve an $\mathcal{O}(1/k)$ convergence rate, and are independent of the prox-diameters unlike existing smoothing techniques \cite{Beck2012a,Nesterov2005d,Nesterov2005c}.

\item[$\mathrm{(c)}$] (\textit{Special cases}) We illustrate that the new techniques enable us to exploit additional structures, including the augmented Lagrangian smoothing scheme, and the strong convexity of the objectives. 
We show the flexibility of our framework by applying it to different constrained settings including conic programs.
\end{itemize}
Let us emphasize some key aspects of this work in detail.
First, our characterization is radically different from existing results such as  \cite{Beck2014,Chambolle2011,Deng2012,He2012a,He2012,Ouyang2014,Shefi2014} thanks to the separation of the convergence rates for primal objective residual and the feasibility gap for \eqref{eq:constr_cvx}. 
We believe that this is important since the separated constraint feasibility guarantee can be interpreted as a consensus rate in distributed optimization. 
Second, our assumptions cover a broader class of problems: we can trade-off the primal objective residual and the feasibility gap without any heuristic strategy on the algorithmic parameters while maintaining the best-known convergence rate for a class of fully nonsmooth convex problems in \eqref{eq:constr_cvx}.
Third, our augmented Lagrangian algorithm generates simultaneously both the primal-dual sequence compared to existing augmented Lagrangian algorithms, while it maintains its $\mathcal{O}\left(\frac{1}{k^2}\right)$-worst-case convergence rate both on the objective residual and on the feasibility gap. 
Fourth, we also describe how to adapt known structures on the objective and the constraint components, such as strong convexity to obtain new variants of our methods.  
Fifth, this work significantly expands on our earlier conference work \cite{TranDinh2014b} not only with new methods but also by demonstrating the impact of warm-start and restart. 
Finally, our forthcoming paper \cite{Tran-Dinh2015}  also demonstrates how our analysis framework and gap reduction model  extend to cover alternating direction optimization methods. 

%%%% 1.3. Paper organization.
\subsection{Paper organization}
In Section \ref{sec:smoothing}, we propose a smoothing technique with proximity functions for \eqref{eq:primal_cvx}-\eqref{eq:constr_cvx} to estimate the primal-dual gap. 
We also investigate the properties of smoothed gap function and introduce the model-based gap reduction condition. 
Section \ref{sec:alg_scheme} presents the first primal-dual algorithmic framework using accelerated (proximal-) gradient schemes for solving \eqref{eq:primal_cvx}-\eqref{eq:constr_cvx} and its convergence theory. 
Section \ref{sec:algorithm3} provides  the second primal-dual algorithmic framework using averaging sequences for solving \eqref{eq:primal_cvx}-\eqref{eq:constr_cvx} and its convergence theory. 
Section \ref{sec:variants} specifies different instances of our algorithmic framework for \eqref{eq:primal_cvx}-\eqref{eq:constr_cvx} under other common optimization structures and generalizes it to the cone constraint $\Ab\xb - \cb \in \mathcal{K}$. 
Numerical examples are presented in Section~\ref{sec:num_experiments}.
A comparison between our approach and existing methods is given in Section~\ref{sec:comparison}.
For clarity of exposition, technical proofs are moved to the appendix.
%%%%%%% End of Section 1.

%%% 2. Smoothed gap function and optimality characterization.
\section{Smoothed gap function and optimality characterization}\label{sec:smoothing}
We propose to smooth the primal-dual gap function defined by \eqref{eq:gap_fun} by proximity functions.
Then, we provide a key lemma to characterize the optimality condition for \eqref{eq:primal_cvx} and \eqref{eq:dual_cvx}.

%%%% 2.1. Basic notation.
\subsection{Basic notation}
We  use $\iprods{\cdot,\cdot}$ for the standard inner product and $\norm{\xb}_2$ for the Euclidean norm.
Given a matrix $\Sb$, we define a semi-norm of $\xb$ as $\snorm{\xb}_{\Sb} := \sqrt{\iprods{\Sb\xb,\Sb\xb}}$.
When $\Sb$ is the identity matrix $\Id$, we recover the standard Euclidean norm.
When $\Sb^{\top}\Sb$ is positive definite, the semi-norm becomes  a weighted-norm. In this case, its dual norm exists and is defined by $\snorm{\ub}_{\Sc,\ast} = \max\set{\iprods{\ub,\vb} \mid \snorm{\vb}_{\Sb}=1}$. 
When $\Sb^{\top}\Sb$ is not positive definite, we still consider the quantity $\snorm{\ub}_{\Sc,\ast} = \max\set{\iprods{\ub,\vb} \mid \snorm{\vb}_{\Sb}=1}$, although
$\snorm{\ub}_{\Sc,\ast}$ is finite if and only if $\ub \in \mathrm{Ran}(\Sb^\top)$. 

We also use $\Vert\cdot\Vert_{\Xc}$ (respectively, $\Vert\cdot\Vert_{\Yc}$) and $\Vert\cdot\Vert_{\Xc, \ast}$ (respectively, $\Vert\cdot\Vert_{\Yc,\ast}$) for the norm and the corresponding dual norm in the primal space $\Xc$ (respectively, the dual space $\Yc$) induced by the above standard inner product in $\Xc$ (respectively, in $\Yc$).
Given a proper, closed, and convex function $f$, we use $\dom{f}$ and $\partial{f}(\xb)$ to denote its domain and its subdifferential at $\xb$, respectively. 
If $f$ is differentiable, then we use $\nabla{f}(\xb)$ for its gradient at $\xb$.
For a given set $\Cc$, $\delta_{\Cc}(\xb) := 0$ if $\xb\in \Cc$, and $\delta_{\Cc}(\xb) := +\infty$, otherwise, denotes the indicator function of $\Cc$. In addition, $\ri{\Cc}$ denotes the relative interior of $\Cc$.

For a smooth function $f: \Zc \to \mathbb{R}$, we say that $f$ has the $L_f$-Lipschitz gradient with respect to the norm $\norm{\cdot}_\Zc$ if for any $\zb, \tilde{\zb}\in\dom{f}$, we have $\Vert\nabla{f}(\zb) - \nabla{f}(\tilde{\zb})\Vert_{\Zc, \ast} \leq L_f\norm{\zb - \tilde{\zb}}_\Zc$, where $L_f \equiv L(f) \in [0, \infty)$. 
We denote by $\Fc_{L_f}^{1,1}$ the class of all convex functions $f$ with the $L_f$-Lipschitz gradient.
We also use $\mu_f \equiv \mu(f)$ for the strong convexity parameter of a convex function $f$
with respect to the semi-norm $\snorm{\cdot}_{\Zc}$, i.e., $f(\cdot) - (\mu_f/2)\snorm{\cdot}_{\Zc}^2$ is convex.
For a proper, closed and convex function $f$, we use $\prox_f$ to denote its proximal operator, which is defined as $\prox_f(\zb) := \argmin\set{ f(\ub) + (1/2)\Vert\ub - \zb\Vert_\Zc^2 \mid \ub\in\dom{f} }$.

%%%% 2.2. Proximity functions and Bregman distance.
\subsection{Smooth proximity functions and Bregman distance}
We use the following two mathematical tools in the sequel.

%% 2.2.1 Proximity functions.
\subsubsection{Proximity functions} 
Given a nonempty, closed and convex set $\Zc$ in the primal space or in the dual space, a  continuous, and $\mu_p$-strongly convex function $p$ is called a \textit{proximity} function (or a prox-function) of $\Zc$ if $\Zc\subseteq \dom{p}$. 
We also denote 
\vspace{-0.5ex}
\begin{equation}\label{eq:center_point}
\zbar^c := \argmin\set{p(\zb) \mid \zb\in\dom{p}}~~~\text{and}~~~D_{\Zc} := \sup\set{ p(\zb) \mid \zb\in\Zc},
\vspace{-0.5ex}
\end{equation} 
as the prox-center of $p$ and the prox-diameter of $\Zc$, respectively.
Without loss of generality, we can assume that $\mu_p = 1$ and $p(\zbar^c) = 0$. Otherwise, we can shift and rescale the function $p$. Moreover, $D_{\Zc} \geq 0$, and it is finite if $\Zc$ is bounded.

In addition to the strong convexity, we also limit our class of prox-functions to the smooth ones, which have a Lipschitz gradient with the Lipschitz constant $L_p \geq 1$. We denote the class of prox-functions whose gradient has Lipschitz constant $L$ by $\Sc^{1,1}_{L,1}$.
For example, $p_{\Zc}(\zb) := (1/2)\snorm{\zb}_{\Sb}^2$ is a simple prox-function in $\Zc = \R^{n_z}$, i.e., $\frac{1}{2}\snorm{\cdot}_{\Sb}^2 \in\Sc^{1,1}_{\norm{\Sb}^2,1}(\Zc)$. 

%% 2.2.2. Bregman distance.
\subsubsection{Bregman distance}
Instead of smoothing the primal and dual problems \eqref{eq:primal_cvx}-\eqref{eq:dual_cvx} by smooth proximity functions, we use a Bregman distance defined via $p_{\Zc}$ as
\begin{equation}\label{eq:bregman_dist}
b_{\Zc}(\zb, \dot{\zb}) := p_{\Zc}(\zb) - p_{\Zc}(\dot{\zb}) - \iprods{\nabla{p}_{\Zc}(\dot{\zb}), \zb - \dot{\zb}},~~\forall \zb,\dot{\zb}\in\Zc,
\end{equation}
where $p_{\Zc}\in\Sc_{L,1}^{1,1}(\Zc)$.
Clearly, if we fix $\dot{\zb} = \bar{\zb}^c$ at the center point of $p_{\Zc}$, then $b_{\Zc}(\zb,\bar{\zb}^c) = p_{\Zc}(\zb)$.
In addition, $\nabla_1{b_{\Zc}}(\zb,\zb) = 0$ for all $\zb\in\Zc$. We use in the sequel $\nabla{b_{\Zc}}$ for $\nabla_1{b_{\Zc}}$.

%%% 2.3. Basic assumption.
\subsection{Basic assumption}
Our main assumption for  problems \eqref{eq:primal_cvx}-\eqref{eq:dual_cvx} is to guarantee the strong duality, which essentially requires the following assumption (see, \cite[Proposition 15.22]{Bauschke2011}).

% Assumption A.1.
\begin{assumption}\label{as:A1}
The solution set $\Xopt$ of the primal problem \eqref{eq:primal_cvx} $($or \eqref{eq:constr_cvx}$)$ is nonempty.
In addition, the following assumption holds for either \eqref{eq:primal_cvx} or \eqref{eq:constr_cvx}:
\begin{itemize}
\item[$\mathrm{(a)}$] The condition $0 \in \ri{\dom{g} - \Ab(\dom{f})}$ for \eqref{eq:primal_cvx} holds.
\item[$\mathrm{(b)}$] The Slater condition $\ri{\dom{f}}\cap\set{\xb\in\R^n \mid \Ab\xb = \cb} \neq\emptyset$ for \eqref{eq:constr_cvx} holds.
\end{itemize}
\end{assumption}
Now, we define $\Xc := \overline{\dom{f}}$, $\Yc := \overline{\dom{g^{\ast}}}$ and $\Wc := \Xc\times\Yc$.
Note that if the function $g$ in \eqref{eq:primal_cvx} is Lipschitz continuous on $\Yc$, then Assumption~A.\ref{as:A1} holds.

Under Assumption~A.\ref{as:A1}, the strong duality for  \eqref{eq:primal_cvx}-\eqref{eq:dual_cvx} holds, see, e.g.,~\cite{Bauschke2011}.
The solution set $\Yopt$ of the dual problem \eqref{eq:dual_cvx} is nonempty, and
\begin{equation}\label{eq:strong_duality}
\Popt = f(\xopt) + g(\Ab\xopt) = \Dopt = -f^{\ast}(-\Ab^{\top}\yopt) - g^{\ast}(\yopt),~\forall \xopt \in\Xopt,~\forall\yopt\in\Yopt.
\end{equation}
Let $\Wopt := \Xopt\times\Yopt$ be the primal-dual (or the saddle point) set of \eqref{eq:primal_cvx}-\eqref{eq:dual_cvx}.
Then, \eqref{eq:strong_duality} is equivalent to $f(\xopt) + g(\Ab\xopt) + f^{\ast}(-\Ab^{\top}\yopt) + g^{\ast}(\yopt) = 0$ for all $(\xopt,\yopt)\in\Xopt\times\Yopt$.
In addition, we can write the optimality condition of \eqref{eq:primal_cvx}-\eqref{eq:dual_cvx} as follows:
\begin{equation}\label{eq:opt_cond}
- \Ab^{\top}\yopt \in \partial{f}(\xopt)~~\text{and}~~~\yopt \in \partial{g}(\Ab\xopt).
\end{equation}
Note that this condition can be written as $0 \in \partial{f}(\xopt) + \Ab^{\top}\partial{g}(\Ab\xopt)$ for the primal problem \eqref{eq:primal_cvx}, and $0 \in \partial{g^{\ast}}(\yopt) - \Ab\partial{f^{\ast}}(-\Ab^{\top}\yopt)$ for the dual problem \eqref{eq:dual_cvx}.

%%%% 2.4. Smoothed primal-dual gap function.
\subsection{Smoothed primal-dual gap function}\label{subsec:excessive_gap}
The gap function $G$ defined in \eqref{eq:gap_fun} is \textit{convex} but generally \textit{nonsmooth}. 
This subsection introduces a smoothed primal-dual gap function that approximates $G$ using smooth prox-functions. 

%% 2.4.1. The first smoothed approximation.
\subsubsection{The first smoothed approximation}
Let $b_{\Xc}$ be a Bregman distance defined on $\Xc$, and $\dot{\xb}\in\Xc$ be given, we consider an approximation to the dual objective function $D(\cdot)$ as
\begin{equation}\label{eq:D_gamma}
D_{\gamma}(\yb; \dot{\xb}) := \displaystyle\min_{\xb\in\Xc}\set{ f(\xb) + \iprods{\yb, \Ab\xb} + \gamma b_{\Xc}(\xb,\dot{\xb})} - g^{\ast}(\yb) \equiv -f_{\gamma}^{\ast}(-\Ab^{\top}\yb;\dot{\xb}) - g^{\ast}(\yb),
\end{equation}
where $\gamma > 0$ is a dual smoothness parameter.
The minimization subproblem in \eqref{eq:D_gamma} always admits a solution, which is denoted by
\vspace{-1ex}
\begin{equation}\label{eq:x_ast}
\xast_{\gamma}(\yb;\dot{\xb}) := \displaystyle\argmin_{\xb\in\Xc}\set{f(\xb) + \iprods{\yb,\Ab\xb} + \gamma b_{\Xc}(\xb,\dot{\xb})}.
\vspace{-1ex}
\end{equation}

% The choice of prox-functions.
We emphasize that our algorithms presented in the next sections support parallel and distributed computation for the \textit{decomposable setting} of \eqref{eq:primal_cvx} or \eqref{eq:constr_cvx}, where $f$ is decomposed into $N$ terms as $f(\xb) := \sum_{i=1}^Nf_i(\xb_i)$ with the $i$-th block being in $\R^{n_i}$ such that $\sum_{i=1}^Nn_i = n$.
In this case, we can choose a separable prox-function to generate a decomposable Bregman distance $b_{\Xc}(\xb,\dot{\xb}) := \sum_{i=1}^Nb_i(\xb_i, \dot{\xb}_i)$ to approximate the dual function $D$ defined in \eqref{eq:dual_cvx}.
By exploiting this decomposable structure, we can evaluate the smoothed dual function and its gradient in a parallel or distributed fashion.
We will discuss the detail of this setting in the sequel, see, Section~\ref{sec:variants}.

%% 2.4.2. The second smoothed approximation.
\subsubsection{The second smoothed approximation}
Let $b_{\Yc}$ be a Bregman distance defined on $\Yc$ the feasible set of the dual problem \eqref{eq:dual_cvx} and $\dot y \in \Yc$.
We consider an approximation to the objective $g(\cdot)$ in \eqref{eq:primal_cvx} as
\begin{equation}\label{eq:g_beta}
g_{\beta}(\ub;\dot{\yb}) := \max_{\yb\in\Yc}\set{ \iprods{\ub,\yb} - g^{\ast}(\yb) - \beta b_{\Yc}(\yb,\dot{\yb})},
\end{equation}
where $\beta >0$ is a primal smoothness parameter.
We also denote the solution of the maximization problem in \eqref{eq:g_beta} by $\yb^{\ast}_{\beta}(\ub; \dot{\yb})$, i.e.:
\begin{equation}\label{eq:yast_beta}
\yb^{\ast}_{\beta}(\ub;\dot{\yb}) := \arg\max_{\yb\in\Yc}\set{ \iprods{\ub,\yb} - g^{\ast}(\yb) - \beta b_{\Yc}(\yb,\dot{\yb})}.
\end{equation}
We consider an approximation to the primal objective function $P$ as
\begin{equation}\label{eq:F_beta}
P_{\beta}(\xb;\dot{\yb}) := f(\xb) + g_{\beta}(\Ab\xb; \dot{\yb}).
\end{equation}
This function is the second smoothed approximation for the primal problem.
We note that if $g(\cdot) := \delta_{\set{\cb}}(\cdot)$ and $p_{\Yc}(\cdot) := (1/2)\norm{\cdot }_2^2$, then $\yb^{\ast}_{\beta}(\ub;\dot{\yb}) = \dot{\yb} + \beta^{-1}(\ub - \cb)$, which has a closed form.

%%%% 2.5. Bounds on  objective residual and primal feasibility gap.
\subsection{Smoothed gap function and its properties}
Given $D_{\gamma}$ and $P_{\beta}$ defined by \eqref{eq:D_gamma} and \eqref{eq:F_beta}, respectively, and the primal-dual variable $\wb := (\xb, \yb)$,  the smoothed primal-dual gap (or the smoothed gap) $G_{\gamma \beta}$ is now defined as
\begin{equation}\label{eq:smoothed_gap_func}
G_{\gamma\beta}(\wb;\dot{\wb}) :=  P_{\beta}(\xb;\dot{\yb}) - D_{\gamma}(\yb;\dot{\xb}),
\end{equation}
where $\gamma$ and $\beta$ are two smoothness parameters, and $\dot{\wb} := (\dot{\xb}, \dot{\yb})$.

The following lemma provides  fundamental bounds of the objective residual $P(\xb) - \Popt $ for the unconstrained form \eqref{eq:primal_cvx}, and the objective residual $f(\xb) - \fopt$ and the feasibility gap $\norm{\Ab\xb - \cb}_{\Yc,\ast}$ for the constrained form \eqref{eq:constr_cvx}.
For clarity of exposition, we move its proof to Appendix \ref{apdx:le:excessive_gap_aug_Lag_func}.

% Lemma 3.6.
\begin{lemma}\label{le:excessive_gap_aug_Lag_func}
Let  $G_{\gamma\beta}$ be the smoothed gap function defined by  \eqref{eq:smoothed_gap_func} and $S_\beta(\xb;\dot{\yb}) := P_{\beta}(\xb;\dot{\yb}) - \Popt = f(\xb) +  g_\beta(\Ab\xb; \dot{\yb}) - \Popt$ be the smoothed objective residual.
Then, we have  
\begin{equation}\label{eq:S_bound}
S_\beta(\xb;\dot{\yb}) \leq G_{\gamma\beta}(\wb; \dot{\wb}) + \gamma b_{\Xc}(\xopt, \dot{\xb})~~~\text{and}~~~\frac{1}{2} \norm{\yb^{*}_\beta(\Ab\xb; \dot{\yb}) - \yopt}^2_{\Yc,\ast}  \leq  b_{\Yc}(\yopt, \dot y) + \frac{1}{\beta} S_\beta(\xb;\dot{\yb}). 
\end{equation}
Suppose that $g(\cdot) := \delta_{\set{\cb}}(\cdot)$. 
Then, for any $\yopt\in\Yopt$ and $\xb\in\Xc$, one has
\begin{equation}\label{eq:lower_bound}
-\Vert\yopt\Vert_{\Yc}\Vert\Ab\xb - \cb\Vert_{\Yc, *} \leq f(\xb) - \fopt
\end{equation}
and the following primal objective residual and feasibility gap estimates hold for \eqref{eq:constr_cvx}:
\begin{equation}\label{eq:main_bound_gap_3}
\left\{\begin{array}{ll}
f(\xb) - \fopt &\leq S_\beta(\xb;\dot{\yb}) -\iprods{\yopt, \Ab\xb -\cb} + \beta b_{\Yc}(\yopt, \dot{\yb}), \vspace{1ex}\\
\norm{\Ab\xb - \cb}_{\Yc, *} &\leq  \beta L_{b_{\Yc}} \Big[ \norm{\yopt - \dot{\yb}}_{\Yc}  + \big(\norm{\yopt - \dot{\yb}}_{\Yc}^2 + 2L_{b_{\Yc}}^{-1}\beta^{-1}S_{\beta}(\xb;\dot{\yb}) \big)^{1/2}\Big],
\end{array}\right.
\end{equation}
where the quantity in the square root is always nonnegative.
\end{lemma}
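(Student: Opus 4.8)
The plan is to prove the four groups of bounds essentially independently, using only the Fenchel--Young inequality, the strong convexity and smoothness of the Bregman functions $b_{\Xc},b_{\Yc}$, and strong duality \eqref{eq:strong_duality}; the definitions of $G_{\gamma\beta}$, $S_\beta$, $D_\gamma$ and $g_\beta$ do most of the bookkeeping. For the first inequality of \eqref{eq:S_bound}, I first note that $S_\beta(\xb) - G_{\gamma\beta}(\wb,\dot{w}) = D_\gamma(\yb;\dot{\xb}) - \Popt$ directly from the definitions, so the claim reduces to $D_\gamma(\yb;\dot{\xb}) \le \Popt + \gamma b_{\Xc}(\xopt,\dot{\xb})$. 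I bound the minimization defining $D_\gamma$ in \eqref{eq:D_gamma} from above by evaluating its objective at $\xb=\xopt$, which gives $D_\gamma(\yb;\dot{\xb}) \le f(\xopt) + \iprods{\yb,\Ab\xopt} + \gamma b_{\Xc}(\xopt,\dot{\xb}) - g^{\ast}(\yb)$, and then apply Fenchel--Young, $\iprods{\yb,\Ab\xopt} - g^{\ast}(\yb) \le g(\Ab\xopt)$, to recognize $f(\xopt)+g(\Ab\xopt)=\Popt$. The leftover $\gamma b_{\Xc}(\xopt,\dot{\xb})$ is exactly the advertised slack.

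For the second inequality of \eqref{eq:S_bound}, write $\yhat := \yb^{\ast}_\beta(\Ab\xb;\dot{\yb})$ for the maximizer in \eqref{eq:g_beta}. Since $b_{\Yc}(\cdot,\dot{\yb})$ is $1$-strongly convex, the maximand there is $\beta$-strongly concave, so comparing its value at the maximizer $\yhat$ with its value at the competitor $\yopt$ gives $\tfrac{\beta}{2}\norm{\yhat - \yopt}_{\Yc}^2 \le g_\beta(\Ab\xb;\dot{\yb}) - \iprods{\Ab\xb,\yopt} + g^{\ast}(\yopt) + \beta b_{\Yc}(\yopt,\dot{\yb})$. Dividing by $\beta$, it remains to show that the first three terms are at most $S_\beta(\xb)$; substituting $g_\beta(\Ab\xb;\dot{\yb}) = S_\beta(\xb) - f(\xb) + \Popt$ and then using strong duality $\Popt = -f^{\ast}(-\Ab^{\top}\yopt) - g^{\ast}(\yopt)$ reduces this to the Fenchel--Young inequality $-f^{\ast}(-\Ab^{\top}\yopt) \le f(\xb) + \iprods{\Ab^{\top}\yopt,\xb}$, which always holds.

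For the constrained estimates I specialize $g = \delta_{\set{\cb}}$, so that $g^{\ast}(\yb)=\iprods{\cb,\yb}$ and $g_\beta(\Ab\xb;\dot{\yb}) = \max_{\yb}\{\iprods{\Ab\xb-\cb,\yb} - \beta b_{\Yc}(\yb,\dot{\yb})\}$. The lower bound in \eqref{eq:lower_bound} follows from the optimality condition $-\Ab^{\top}\yopt \in \partial f(\xopt)$ of \eqref{eq:opt_cond} together with feasibility $\Ab\xopt=\cb$: the subgradient inequality gives $f(\xb) \ge \fopt - \iprods{\yopt,\Ab\xb-\cb}$, and Cauchy--Schwarz with the dual norm yields the stated bound, while the upper bound is immediate from weak duality $D(\yb) \le \Popt = \fopt$. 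The first line of \eqref{eq:main_bound_gap_3} then follows by writing $f(\xb)-\fopt = S_\beta(\xb) - g_\beta(\Ab\xb;\dot{\yb})$ and lower-bounding $g_\beta(\Ab\xb;\dot{\yb})$ by plugging $\yb=\yopt$ into its defining maximum, which produces exactly $-\iprods{\yopt,\Ab\xb-\cb} + \beta b_{\Yc}(\yopt,\dot{\yb})$.

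The feasibility bound in \eqref{eq:main_bound_gap_3} is the technical core. The first-order condition for the maximizer gives $\Ab\xb-\cb = \beta\nabla b_{\Yc}(\yhat,\dot{\yb}) = \beta(\nabla p_{\Yc}(\yhat) - \nabla p_{\Yc}(\dot{\yb}))$; since $p_{\Yc}\in\Sc^{1,1}_{L,1}$ has $L_{b_{\Yc}}$-Lipschitz gradient, this yields $\norm{\Ab\xb-\cb}_{\Yc,\ast} \le \beta L_{b_{\Yc}}\norm{\yhat-\dot{\yb}}_{\Yc} \le \beta L_{b_{\Yc}}(\norm{\yhat-\yopt}_{\Yc} + \norm{\yopt-\dot{\yb}}_{\Yc})$. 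I then control $\norm{\yhat-\yopt}_{\Yc}$ through the second estimate of \eqref{eq:S_bound}: multiplying it by $2L_{b_{\Yc}}^{-1}$ and using the smoothness bound $2L_{b_{\Yc}}^{-1}b_{\Yc}(\yopt,\dot{\yb}) \le \norm{\yopt-\dot{\yb}}_{\Yc}^2$ bounds $\norm{\yhat-\yopt}_{\Yc}$ by a multiple of $(\norm{\yopt-\dot{\yb}}_{\Yc}^2 + 2L_{b_{\Yc}}^{-1}\beta^{-1}S_\beta(\xb))^{1/2}$, and the very same two ingredients show that this radicand is nonnegative even when $S_\beta(\xb)<0$. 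Substituting back and using the triangle inequality gives the claimed estimate. I expect the bookkeeping of the constants $L_{b_{\Yc}}$ and $\beta$ through the triangle inequality and the quadratic inversion to be the main obstacle, since this is the one place where the strong-convexity modulus and the Lipschitz constant of the prox-function must be tracked simultaneously in order to land on the stated square-root expression.
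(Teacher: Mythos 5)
Your proofs of \eqref{eq:S_bound}, \eqref{eq:lower_bound}, and the first line of \eqref{eq:main_bound_gap_3} are correct, and in places cleaner than the paper's own: for the first bound in \eqref{eq:S_bound} you just evaluate the minimization defining $D_{\gamma}$ at $\xopt$ and invoke Fenchel--Young, whereas the paper routes through conjugate inequalities, strong duality \eqref{eq:strong_duality}, and the optimality conditions \eqref{eq:opt_cond}; your argument for the nonnegativity of the radicand (via the second bound in \eqref{eq:S_bound} plus smoothness of $b_{\Yc}$) is also valid and is arguably more direct than the paper's ``the quadratic has a real root'' remark. One remark: you are implicitly using the reading $D_{\gamma}(\yb;\dot{\xb}) = \min_{\xb}\set{f(\xb)+\iprods{\yb,\Ab\xb}+\gamma b_{\Xc}(\xb,\dot{\xb})} - g^{\ast}(\yb)$, i.e.\ without the leading minus sign literally printed in \eqref{eq:D_gamma}; that is the reading consistent with $D_{\gamma}\equiv -f^{\ast}_{\gamma}(-\Ab^{\top}\yb)-g^{\ast}(\yb)$ and with the appendix, so this is fine.

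The genuine gap is in the feasibility estimate, exactly where you anticipated trouble. Your chain is $\norm{\Ab\xb-\cb}_{\Yc,\ast} \le \beta L_{b_{\Yc}}\norm{\yhat-\dot{\yb}}_{\Yc} \le \beta L_{b_{\Yc}}\big(\norm{\yopt-\dot{\yb}}_{\Yc} + \norm{\yhat-\yopt}_{\Yc}\big)$, and then \eqref{eq:S_bound} together with $2b_{\Yc}(\yopt,\dot{\yb}) \le L_{b_{\Yc}}\norm{\yopt-\dot{\yb}}^2_{\Yc}$ gives $\norm{\yhat-\yopt}_{\Yc} \le L_{b_{\Yc}}^{1/2}\big(\norm{\yopt-\dot{\yb}}_{\Yc}^2 + 2L_{b_{\Yc}}^{-1}\beta^{-1}S_{\beta}(\xb)\big)^{1/2}$. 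Substituting back, the square-root term carries the coefficient $\beta L_{b_{\Yc}}^{3/2}$, not $\beta L_{b_{\Yc}}$ as stated: your two lossy steps (Lipschitz gradient of $p_{\Yc}$, then the strong-convexity distance bound) compound, and the result is strictly weaker than \eqref{eq:main_bound_gap_3} whenever $L_{b_{\Yc}}>1$. The paper avoids this compounding by never splitting off $\norm{\yhat-\yopt}_{\Yc}$ at all: it lower-bounds $g_{\beta}$ by a quadratic, using $b_{\Yc}(\hat{\yb},\dot{\yb}) \le \tfrac{L_{b_{\Yc}}}{2}\norm{\hat{\yb}-\dot{\yb}}^2_{\Yc}$ inside the supremum in \eqref{eq:g_beta} to obtain $g_{\beta}(\Ab\xb;\dot{\yb}) \ge \tfrac{1}{2\beta L_{b_{\Yc}}}\norm{\Ab\xb-\cb}^2_{\Yc,\ast} + \iprods{\dot{\yb},\Ab\xb-\cb}$, combines this with $\iprods{\yopt,\cb-\Ab\xb} \le S_{\beta}(\xb) - g_{\beta}(\Ab\xb;\dot{\yb})$ (which follows from your identity $f(\xb)-\fopt = S_{\beta}(\xb)-g_{\beta}(\Ab\xb;\dot{\yb})$ and the lower bound \eqref{eq:lower_bound}), and then solves the scalar quadratic inequality $t^2 - 2\beta L_{b_{\Yc}}\norm{\yopt-\dot{\yb}}_{\Yc}\, t - 2\beta L_{b_{\Yc}} S_{\beta}(\xb) \le 0$ in $t = \norm{\Ab\xb-\cb}_{\Yc,\ast}$. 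This single quadratic inversion keeps every factor of $L_{b_{\Yc}}$ tight and lands exactly on the stated bound. So to prove the lemma as stated you must replace your triangle-inequality step by this one-shot quadratic argument; as written, your proposal only establishes a weaker variant of the feasibility bound.
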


The estimates \eqref{eq:lower_bound} and \eqref{eq:main_bound_gap_3} are independent of optimization methods used to construct $\{\wbar^k\}$ for the primal-dual variable $\wb = (\xb, \yb)$.
However, their convergence guarantee  depends on the smoothness parameters $\gamma_k$ and $\beta_k$. 
Hence, the convergence rate of the objective residual $f(\xbar^k) - \fopt$ and feasibility gap $\norm{\Ab\xbar^k - \cb}_{\Yc,\ast}$ depends on the rate of $\{(\gamma_k, \beta_k)\}$. 

The first inequality in~\eqref{eq:S_bound} is more precise than what~\cite{Nesterov2005c} tells us. 
It holds even if $\Xc$ is unbounded and it shows that if $\dot x$ is close to $\xopt$, then the smoothed function is more accurate.
The second inequality in~\eqref{eq:S_bound} shows that the distance between $\yb^{\ast}_\beta(\Ab\xb; \dot{\yb})$ and $\yopt$ is controlled by quantities that will remain bounded. In practice, we 
observe that $\yb^{\ast}_\beta(\Ab\xb; \dot{\yb})$ cconverges to $\yopt$. 
Hence, restarting the algorithm with $\dot{\yb}^{\prime} = \yb^{\ast}_\beta(\Ab\xb; \dot{\yb})$ gives us a chance to accelerate the actual performance of our algorithms while does not hurt the convergence guarantee \cite{fercoq2016restarting}.
%% End of Section 2.

%%%% 3. The accelerated primal-dual gap reduction algorithm.
\section{The accelerated primal-dual gap reduction algorithm}\label{sec:alg_scheme}
Our new scheme builds upon Nesterov's acceleration idea \cite{Nesterov1983,Nesterov2004}.
At each iteration, we apply an accelerated proximal-gradient step to minimize $f + g_{\beta}$. 
Since $f + g_{\beta}$ is nonsmooth, we use the proximal operator of $f$ to generate a proximal-gradient step.
As a key feature, we must update the parameters $\tau_k$ and $\beta_k$ simultaneously at each iteration with analytical updating formulas.

%%%% 3.1. The accelerated primal-dual scheme.
\subsection{The  method}
Let $\xbar^k \in\Xc$ and $\xtilde^k \in\Xc$ be given.
The \textit{Accelerated Smoothed GAp ReDuction} (ASGARD) scheme generates a new point $(\xbar^{k+1}, \xtilde^{k+1})$ as
\begin{equation}\label{eq:ac_pd_scheme}
 \left\{\begin{array}{lll}
&\xhat^k            &:= (1-\tau_k)\xbar^k + \tau_k\xtilde^k, \vspace{0.5ex}\\
&\yast_{\beta_{k+1}}(\Ab\hat{\xb}^k;\dot{\yb}) & := \arg\displaystyle\max_{\yb \in \Yc}\set{ \iprods{\Ab \xhat^k, \yb} - g^{*}(\yb) - \beta_{k+1} b_{\Yc}(\yb, \dot{\yb}) },\\
&\xbar^{k+1}      &:= \prox_{\beta_{k+1}\bar L_\Ab^{-1} f}\left(\xhat^k - \beta_{k+1}\bar L_\Ab^{-1}  \Ab^{\top}\yast_{\beta_{k+1}}(\Ab\hat{\xb}^k;\dot{\yb}) \right), \vspace{0.75ex}\\
&\xtilde^{k+1}   &:= \xtilde^k -  \tau_k^{-1} (\xhat^k - \xbar^{k+1}). 
\end{array}\right.
\tag{\textrm{ASGARD}}
\end{equation}
where  $\tau_k \in (0, 1]$ and $\beta_k > 0$ are parameters that will be defined in
the sequel.
The constant $\bar L_A$ is defined as 
\begin{equation}\label{eq:LA_def}
\bar{L}_{\Ab} := \norm{\Ab}^2 = \displaystyle\max_{\xb \in \R^n} \Big\{ \frac{\norm{\Ab\xb}_{\Yc, *}^2}{\norm{\xb}_\Xc^2}\Big\}.
\end{equation}
The \ref{eq:ac_pd_scheme} scheme requires a mirror step with the conjugate $g^{*}$ of $g$ to get $\yb^{\ast}_{\beta_{k+1}}(\cdot; \dot{\yb})$ in \eqref{eq:yast_beta} and a proximal step of $f$ in the third line. 

The following lemma shows that $\xbar^{k+1}$ updated by \ref{eq:ac_pd_scheme} decreases the smoothed objective residual $P_{\beta_k}(\xbar^k;\dot{\yb}) - \Popt$,  
whose proof can  be found in Appendix \ref{apdx:lem:choose_parameters_asgard}.

%% Lemma 3.
\begin{lemma} \label{lem:choose_parameters_asgard}
Let us choose $\tau_0 := 1$. 
If $\tau_k \in (0, 1)$ is the unique positive root of the cubic polynomial equation $p_3(\tau) := \tau^3/L_{b_{\Yc}} + \tau^2 +  \tau_{k-1}^2 \tau -  \tau_{k-1}^2 = 0$ for $k\geq 1$, and $\beta_k := \frac{\beta_{k-1}}{1+\tau_{k-1}/L_{b_{\Yc}}}$, then $\beta_k = \mathcal{O}\big(\frac{1}{k^{1/L_{b_{\mathcal Y}}}}\big)$ as $k\to\infty$, and
\begin{equation}\label{eq:key_est_a0}
P_{\beta_{k+1}}(\xbar^{k+1};\dot{\yb}) - \Popt + \frac{\tau_k^2}{\beta_{k\!+\!1}} \frac{\bar{L}_{\Ab}}{2} \norm{\tilde{x}^{k\!+\!1} \!\!- \xopt}_{\Xc}^2 \leq \frac{\tau_k^2}{\beta_{k\!+\!1}} \frac{\bar{L}_\Ab}{2} \norm{\tilde{\xb}^0 - \xopt}^2_{\Xc}. 
\end{equation}
 Moreover, if  $L_{b_{\Yc}} = 1$, then $\frac{1}{k+1} \leq \tau_k \leq \frac{2}{k+2}$, $\frac{\tau_k^2}{\beta_{k+1}} \leq \frac{\tau_0^2}{\beta_1(k+1)} = \frac{1}{\beta_1(k+1)}$, and  $\beta_k \leq \frac{2 \beta_1}{k+1}$. 
\end{lemma}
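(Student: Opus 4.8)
The plan is to establish a one-iteration smoothed-gap reduction inequality, scale it by the weight $\beta_{k+1}/\tau_k^2$, and telescope. First I would record the regularity of the smoothed pieces. Because the prox-function generating $b_{\Yc}$ is $1$-strongly convex, $g_{\beta}(\Ab\cdot;\dot{\yb})$ is convex and differentiable with $(\bar{L}_{\Ab}/\beta)$-Lipschitz gradient $\Ab^{\top}\yast_{\beta}(\Ab\cdot;\dot{\yb})$; hence $P_{\beta}=f+g_{\beta}(\Ab\cdot;\dot{\yb})$ is a convex composite whose smooth part has Lipschitz constant $\bar{L}_{\Ab}/\beta$, and the point $\xbar^{k+1}$ in \eqref{eq:ac_pd_scheme} is exactly the proximal-gradient step for $P_{\beta_{k+1}}$ from $\xhat^k$ with step-size $\beta_{k+1}\bar{L}_{\Ab}^{-1}$. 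I would also note the monotonicity $\beta\mapsto g_{\beta}$ nonincreasing and $P_{\beta}(\xopt)\le\Popt$ (since $g_{\beta}\le g$).

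Next I would invoke the proximal-gradient descent inequality: for every $\xb$, $P_{\beta_{k+1}}(\xbar^{k+1})\le P_{\beta_{k+1}}(\xb)+\tfrac{\bar{L}_{\Ab}}{2\beta_{k+1}}\big(\norm{\xhat^k-\xb}^2-\norm{\xbar^{k+1}-\xb}^2\big)$. From the updates $\xhat^k=(1-\tau_k)\xbar^k+\tau_k\xtilde^k$ and $\xtilde^{k+1}=\xtilde^k-\tau_k^{-1}(\xhat^k-\xbar^{k+1})$ one checks the identity $\xbar^{k+1}=(1-\tau_k)\xbar^k+\tau_k\xtilde^{k+1}$. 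Evaluating the inequality at the single point $\xb=(1-\tau_k)\xbar^k+\tau_k\xopt$ and bounding $P_{\beta_{k+1}}(\xb)$ by convexity gives $\xhat^k-\xb=\tau_k(\xtilde^k-\xopt)$ and $\xbar^{k+1}-\xb=\tau_k(\xtilde^{k+1}-\xopt)$, so the two squared terms collapse to $\tfrac{\tau_k^2\bar{L}_{\Ab}}{2\beta_{k+1}}(\norm{\xtilde^k-\xopt}^2-\norm{\xtilde^{k+1}-\xopt}^2)$, leaving the per-iteration bound with leading term $(1-\tau_k)\big(P_{\beta_{k+1}}(\xbar^k)-\Popt\big)$.

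The main obstacle, and the reason the parameters are coupled through $L_{b_{\Yc}}$, is that the previous iterate carries the \emph{larger} smoothing level $\beta_k$, whereas the descent is written for $\beta_{k+1}<\beta_k$; since $g_{\beta_{k+1}}\ge g_{\beta_k}$, naive replacement of $P_{\beta_{k+1}}(\xbar^k)$ by $P_{\beta_k}(\xbar^k)$ goes the wrong way and breaks telescoping. I would resolve this with a \emph{strengthened} reduction inequality: the change of smoothing level costs at most $(\beta_k-\beta_{k+1})\,b_{\Yc}(\yast_{\beta_{k+1}}(\Ab\xbar^k);\dot{\yb})$, while tracking the max-structure of $g_{\beta}$ supplies a negative Bregman term of matching order in the descent. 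The rule $\beta_k=\beta_{k-1}/(1+\tau_{k-1}/L_{b_{\Yc}})$ yields $\beta_k-\beta_{k+1}=\beta_{k+1}\tau_k/L_{b_{\Yc}}$, which is precisely the magnitude the negative term can absorb, so that the residual after replacing $P_{\beta_{k+1}}(\xbar^k)$ by $P_{\beta_k}(\xbar^k)$ is controlled independently of any dual diameter. Imposing that the remaining $(1-\tau_k)\big(P_{\beta_k}(\xbar^k)-\Popt\big)$ telescope after scaling by $\beta_{k+1}/\tau_k^2$ forces the consistency condition $\tfrac{(1-\tau_k)\beta_{k+1}}{\tau_k^2}=\tfrac{\beta_k}{\tau_{k-1}^2}$, which, upon substituting $\beta_k=\beta_{k+1}(1+\tau_k/L_{b_{\Yc}})$ and clearing denominators, is exactly the cubic $\tau_k^3/L_{b_{\Yc}}+\tau_k^2+\tau_{k-1}^2\tau_k-\tau_{k-1}^2=0$. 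The choice $\tau_0=1$ kills the leading $(1-\tau_0)$ term, so the first step starts from $\xhat^0=\xtilde^0$ with no smoothing-change correction and supplies the base case; summing the scaled inequalities then telescopes to \eqref{eq:key_est_a0}. I expect the delicate bookkeeping in matching the negative curvature term against the smoothing-change term at the two distinct points $\yast_{\beta_{k+1}}(\Ab\xbar^k)$ and $\yast_{\beta_{k+1}}(\Ab\xhat^k)$, while keeping every bound free of the dual diameter, to be the hardest part.

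Finally I would establish the parameter rates by elementary induction on the cubic. Setting $\phi_k(\tau)=\tau^3/L_{b_{\Yc}}+\tau^2+\tau_{k-1}^2\tau-\tau_{k-1}^2$, the map $\phi_k$ is strictly increasing with $\phi_k(0)<0<\phi_k(\tau_{k-1})$, giving a unique root $\tau_k\in(0,\tau_{k-1})$, so $\{\tau_k\}$ decreases; checking the signs of $\phi_k(1/(k+1))$ and $\phi_k(2/(k+2))$ under the inductive hypothesis yields $\tfrac{1}{k+1}\le\tau_k\le\tfrac{2}{k+2}$. For $L_{b_{\Yc}}=1$, unrolling $\beta_k=\beta_1\prod_{j=1}^{k-1}(1+\tau_j)^{-1}$ and using $\tau_j\ge 1/(j+1)$ telescopes the product via $\prod_{j=1}^{k-1}\tfrac{j+2}{j+1}=\tfrac{k+1}{2}$ to give $\beta_k\le\tfrac{2\beta_1}{k+1}$; more generally, divergence of $\sum_j\tau_j/L_{b_{\Yc}}$ forces the infinite product to $0$, so $\beta_k\to0$.
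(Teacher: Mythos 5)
Your proposal is correct and takes essentially the same route as the paper's own proof: the paper (Appendix B.2--B.3) likewise applies the proximal-gradient descent inequality for $P_{\beta_{k+1}}$ at the test point $(1-\tau_k)\xbar^k+\tau_k\xopt$, keeps the negative Bregman/curvature terms generated by the max-structure of $g_{\beta}$ at the two points $\Ab\xbar^k$ and $\Ab\xhat^k$ to absorb the smoothing-change cost $(\beta_k-\beta_{k+1})L_{b_{\Yc}}$, and imposes exactly your two coupling conditions $(\beta_k-\beta_{k+1})L_{b_{\Yc}}=\beta_{k+1}\tau_k$ and $(1-\tau_k)\beta_{k+1}/\tau_k^2=\beta_k/\tau_{k-1}^2$ to kill the residual, produce the cubic, and telescope with weight $\beta_{k+1}/\tau_k^2$ from $\tau_0=1$. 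Your induction via sign checks of the cubic at $1/(k+1)$ and $2/(k+2)$, and the product bound giving $\beta_k\le 2\beta_1/(k+1)$ and $\beta_k\to 0$, are minor (equally valid) variants of the paper's corresponding elementary arguments.
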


%%%%% 3.2. A primal-dual algorithmic template.
\subsection{The primal-dual algorithmic template}\label{subsec:algorithm1}
Similar to the accelerated scheme \cite{Beck2009,Nesterov1983}, we can eliminate  $\xtilde^k$ in \ref{eq:ac_pd_scheme} by combining its first line and last line to obtain
\begin{equation*}
\xhat^{k+1} = \xbar^{k+1} + \frac{(1-\tau_k)\tau_{k+1}}{\tau_k}(\xbar^{k+1}-\xbar^k).
\end{equation*}
Now, we combine all the ingredients presented previously and this step to obtain a primal-dual algorithmic 
template for solving \eqref{eq:primal_cvx} as in Algorithm \ref{alg:pd_alg1} below.

%%%%%%%%%%%%%%%%%%%%%%%%%%%%%%%%%%%%%%%%%%%%%%%%
%+ Algorithm 4.1.
%%%%%%%%%%%%%%%%%%%%%%%%%%%%%%%%%%%%%%%%%%%%%%%%
\begin{algorithm}[!ht]\caption{(\textit{Accelerated Smoothed GAp ReDuction \eqref{eq:ac_pd_scheme} algorithm})}\label{alg:pd_alg1}
\begin{normalsize}
\begin{algorithmic}[1]
\Statex{\hskip-4ex}\textbf{Initialization:} 
\State Choose $\beta_1 > 0$ (e.g., $\beta_1 := 0.5\sqrt{\bar{L}_{\Ab}}$, where $\bar{L}_{\Ab}$ is given in \eqref{eq:LA_def}) and set $\tau_0 := 1$. 
\State Choose $\xbar^0 \in \Xc$ arbitrarily, and set $\xhat^0 := \xbar^0$.
\Statex{\hskip-4ex}\textbf{For}~{$k=0$ {\bfseries to} $k_{\max}$, \textbf{perform:}}
\State\label{stp:update_tau} Compute $\tau_{k+1} \in (0, 1)$ the unique positive root of $\tau^3/L_{b_{\Yc}} + \tau^2 +  \tau_{k}^2 \tau -  \tau_{k}^2 = 0$. 
\State\label{stp:ystar} Compute the dual step by solving
\begin{equation*}
\yast_{\beta_{k+1}}(\Ab\xhat^k; \dot{\yb}) := \arg\max_{\hat{\yb} \in \Yc}\set{  \iprods{\Ab \xhat^k, \hat{\yb}} - g^{\ast}(\hat{\yb}) - \beta_{k+1} b_{\Yc}(\hat{\yb}, \dot{\yb}) }.
\end{equation*}
\State\label{stp:xprox} Compute the primal step $\xbar^{k+1}$ using the $\prox_{f}$ of $f$ as
\begin{equation*}
\xbar^{k+1} :=  \prox_{\beta_{k+1}\bar{L}_\Ab^{-1} f}\left(\xhat^k - \beta_{k+1}\bar{L}_\Ab^{-1}  \Ab^{\top}\yast_{\beta_{k+1}}(\Ab\xhat^k; \dot{\yb})\right).
\end{equation*}
\State\label{stp:update_xbeta} Update $\xhat^{k+1} = \xbar^{k+1} + \frac{\tau_{k+1}(1-\tau_k)}{\tau_k}(\xbar^{k+1}-\xbar^k)$ and $\beta_{k+2} := \frac{\beta_{k+1}}{1 + L_{b_{\Yc}}^{-1} \tau_{k+1}}$.
\Statex{\hskip-4ex}\textbf{End~for}
\end{algorithmic}
\end{normalsize}
\end{algorithm}
%%%%%%%%%%%%%%%%%%%%%%%%%%%%%%%%%%%%%%%%%%%%%%%%

%%% Per-iteration complexity of Algorithm 1.
\paragraph{Per-iteration complexity of Algorithm~\ref{alg:pd_alg1}}
The computationally heavy steps of Algorithm \ref{alg:pd_alg1} are Steps \ref{stp:ystar} and \ref{stp:xprox}.
The per-iteration complexity of Algorithm~\ref{alg:pd_alg1} consists of 
\begin{itemize}
\item One matrix-vector multiplication $\Ab\xb$, and one mirror step of $g^{\ast}$ at Step \ref{stp:ystar} to compute $\yast_{\beta_{k+1}}(\Ab\xhat^k;\dot{\yb})$. 
If $g(\cdot) := \delta_{\set{\cb}}(\cdot)$ and $p_{\Yc}(\cdot) := (1/2)\norm{\cdot}_2^2$, then $\yast_{\beta_{k+1}}(\Ab\xhat^k;\dot{\yb}) = \dot{\yb} + \beta_{k+1}^{-1}(\Ab\xhat^k - \cb)$, which only requires one matrix-vector multiplication $Ax$.
\item One adjoint matrix-vector multiplication $\Ab^{\top}\yb$, and one proximal step of $f$ at Step \ref{stp:xprox}.
If $f$ is decomposable, evaluating $\prox_f$ can be  implemented \textit{in parallel}.
\end{itemize}
We note that if  $p_{\Yc}(\cdot) := (1/2)\norm{\cdot}_2^2$, the the mirror step in $g^{\ast}$ becomes a proximal step $\prox_{g^{\ast}}$.

%%%% 3.3. Worst-case complexity estimates.
\subsection{Convergence analysis}
Our first main result is the following two theorems, which show an $\mathcal{O}(1/k)$ convergence rate of Algorithm~\ref{alg:pd_alg1} for both the unconstrained problem \eqref{eq:primal_cvx} and the constrained setting \eqref{eq:constr_cvx}.

% Theorem. 3.
\begin{theorem}\label{th:convergence_accPD_alg_const}
Suppose that $g = \delta_{\set{\cb}}$. 
Let $\beta_1 > 0$ and $b_{\Yc}$ be chosen such that $L_{b_{\Yc}} = 1$.
Let $\{\xbar^k\}$  be the primal sequence generated by Algorithm~\ref{alg:pd_alg1}.
Then the following bounds hold for \eqref{eq:constr_cvx}:
\begin{equation}\label{eq:convergence_accPD1}
{\!\!\!\!\!}\left\{\begin{array}{ll}
f(\xbar^k) \!-\! f^{\star} &\geq - \norm{\yopt}_\Yc \norm{A \bar x^k - \cb}_{\Yc,*},\vspace{1ex}\\
f(\xbar^k) \!-\! f^{\star} &\leq \frac{1}{k}\frac{\bar{L}_\Ab}{2\beta_1} \norm{\bar{\xb}^0 - \xopt}^2_{\Xc} + \norm{\yopt}_{\Yc} \norm{\Ab \bar{\xb}^k -\cb}_{\Yc,*} + \frac{2\beta_1}{k+1}b_{\Yc}(\yopt, \dot{\yb}),\vspace{1ex}\\
 \norm{\Ab \bar{\xb}^k -\cb}_{\Yc,*}  &\leq  \frac{\beta_1}{k+1}\Big[ \norm{\yopt - \dot{\yb}}_{\Yc}  + \big(\norm{\yopt - \dot{\yb}}_{\Yc}^2 + \beta_1^{-2}\bar{L}_\Ab \norm{\bar{\xb}^0 - \xopt}^2_{\Xc} \big)^{1/2}\Big].
\end{array}\right.
\end{equation}
\end{theorem}

%% Proof of Theorem 3.
\begin{proof}
If $g = \delta_{\{\cb\}}$, then we apply Lemma~\ref{le:excessive_gap_aug_Lag_func} and use the bound on the  smoothed optimality gap given by Lemma~\ref{lem:choose_parameters_asgard} with noting that $\tilde x^0 = \bar x^0 = \hat x^0$ to get the bounds as in \eqref{eq:main_bound_gap_3}.
Moreover, since $\beta_k \leq \frac{2\beta_1}{k+1}$ and 
\begin{equation*}
\frac{\tau_k^2}{\beta_{k+1}^2} = \frac{\tau_k^4}{\beta_{k+1}^2}\frac{1}{\tau_k^2} \leq 
\Big(\frac{1}{\beta_1 (k+1)} \Big)^2 (k+1)^2 = \frac{1}{\beta_1^2},
\end{equation*}
using these estimates into the resulting bounds, we obtain \eqref{eq:convergence_accPD1}.
\end{proof}
%% End of the proof.
Note that if we choose $\dot{\yb} := \boldsymbol{0}^m$ and $b_\Yc(y,\dot y) = \frac 12 \norm{y - \dot y}^2_\Yc$, then the bounds \eqref{eq:convergence_accPD1} can be further simplified as
\begin{equation}\label{eq:simple_bound1}
{\!\!\!\!}\left\{\!\!\begin{array}{ll}
\abs{f(\xbar^k) \!-\! \fopt} &{\!\!\!\!\!\!\!}\leq \frac{1}{k}\left(\frac{L_{\Ab}}{2\beta_1}\norm{\xbar^0 \!-\! \xopt}_{\Xc}^2 + 3\beta_1\norm{\yopt}_{\Yc}^2 + \frac{\sqrt{L_{\Ab}}}{\beta_1}\norm{\xbar^0 \!-\! \xopt}_{\Xc}\norm{ \yopt}_{\Yc}\right),\vspace{1ex}\\
\Vert\Ab\xbar^k \!-\! \cb\Vert_{\Yc,\ast} &{\!\!\!\!\!\!\!}\leq \frac{\beta_1}{k+1}\left(2\norm{\yopt}_{\Yc} + \frac{\sqrt{L_{\Ab}}}{\beta_1}\norm{\xbar^0 - \xopt}_{\Xc}\right).
\end{array}\right.{\!\!\!\!\!\!\!\!\!}
\end{equation}
Clearly, the choice of $\beta_1$ in Theorem~\ref{th:convergence_accPD_alg_const} trades off between $\norm{\bar{\xb}^0 - \xopt}^2_{\Xc}$ and $\norm{\yopt - \dot{\yb}}_{\Yc}^2$ on the primal objective residual $f(\xbar^k) -  f^{\star}$ and on the feasibility gap $\Vert\Ab\xbar^k - \cb\Vert_{\Yc,\ast}$.

%%% Theorem 4.
\begin{theorem}\label{th:convergence_accPD_alg}
Suppose that $g$ is Lipschitz continuous and $\dot y = \bar y^c$ is fixed. Then, $D_{\Yc} := \sup_{y}\set{ b_{\Yc}(y, \dot{\yb}) \mid y \in\dom{g^{\ast}}} < +\infty$ $($i.e., $D_{\Yc}$ is bounded$)$. 
Let $\beta_1 > 0$ and $b_{\Yc}$ be chosen such that $L_{b_{\Yc}} = 1$.
Let $\{\xbar^k\}$  be the primal sequence generated by Algorithm~\ref{alg:pd_alg1}.
Then, the primal objective residual of \eqref{eq:primal_cvx} satisfies
\begin{equation}\label{eq:convergence_accPD1a}
P(\xbar^k) - \Popt \leq \frac{\bar{L}_{\Ab}}{2\beta_1 k}\norm{\xbar^0 - \xopt}_{\Xc}^2 + \frac{2\beta_1}{k+1}D_\Yc, ~~\text{for all}~k\geq 1.
\end{equation}
\end{theorem}

%% Proof of Theorem 4.
\begin{proof}
If $g$ is Lipschitz continous, then, for all $x\in\Xc$, $\partial g(\Ab \xb) \neq \emptyset$ and $\dom{g^*}$ is bounded. 
For $y \in \dom{g^*}$, we have $b_{\Yc}(y, \dot{\yb}) \leq D_\Yc < +\infty$.  
Let $\yb^{\ast}(\xb)\in\partial{g}(\Ab\xb)$. Then, we can show that
\begin{align*}
g(\Ab\xb) &= \iprods{\Ab\xb,\yb^{\ast}(\xb)} - g^{\ast}(\yb^{\ast}(\xb)) \leq \max_{\yb\in\Yc}\set{\iprods{\Ab\xb,\yb} - g^{\ast}(\yb) - \beta b_{\Yc}(\yb,\dot{\yb})} + \beta b_{\Yc}(\yb^{\ast}(\xb), \dot{\yb})\\
&  \leq g_{\beta}(\Ab\xb) + \beta b_{\Yc}(\yb^{\ast}(\xb), \dot{\yb}) \leq g_{\beta}(\Ab\xb) + \beta D_\Yc.
\end{align*}
Therefore, the bound \eqref{eq:convergence_accPD1a} follows directly from \eqref{eq:key_est_a0} and this inequality.
\end{proof}
%% End of the proof.

%% Remark 1.
\begin{remark}\label{re:optimal_choice_of_beta_1}
If, in addition to $g^{*}$, $f$ also has a bounded domain, we recover the assumptions of~\cite{Nesterov2005c}. 
By choosing $\bar{\xb}^c := \bar{\xb}^0$, and  $\beta_1 := \sqrt{\bar L_A \frac{D_\Xc}{2D_\Yc}}$, we get a convergence  bound as $P(\bar{\xb}^k) - P^{\star} \leq \frac{2\sqrt{2}\sqrt{\bar L_A D_\Xc D_\Yc }}{k}$.
The worst-case convergence rate bound \eqref{eq:convergence_accPD1a} is the same as the one in \cite{Nesterov2005c} $($up to a small constant factor$)$. 
However, $\beta_1$ does not depend on the tolerance $\varepsilon$ as in \cite{Nesterov2005c}.
\end{remark}

%% Remark 2.
\begin{remark}\label{re:update_tau}
When Algorithm~\ref{alg:pd_alg1} is applied to solve the constrained convex problem~\eqref{eq:constr_cvx} using $p_{\Yc}(\cdot) := \frac{1}{2}\Vert\cdot\Vert_2^2$, we can simplify the update rule for $\tau_k$ at Step~\ref{stp:update_tau} and $\beta_k$ at Step~\ref{stp:update_xbeta} as follows:
\begin{equation}\label{eq:update_tau_new}
\beta_{k+1} := (1-\tau_k)\beta_k,~~~~~\text{and}~~~~\tau_{k+1} := \frac{\tau_k}{\tau_k + 1} = \frac{1}{k+2}.
\end{equation}
This update rule does not improve the worst-case convergence guarantee in Theorem~\ref{th:convergence_accPD_alg}, but it is simple.
The detail analysis can be found in Appendix~\ref{apdx:new_update_tau}.
\end{remark}
%% End of Section 3.

%%%% 4. The accelerated dual smoothed gap reduction method. 
\section{The accelerated dual smoothed gap reduction method}\label{sec:algorithm3}
Algorithm~\ref{alg:pd_alg1} can be viewed as an accelerated proximal scheme applying to minimize the function $P_{\gamma}(\cdot;\dot{\yb})$ defined in \eqref{eq:F_beta}.
Now, we exploit the smoothed gap function $G_{\gamma\beta}$ defined by \eqref{eq:smoothed_gap_func} to develop a novel primal-dual method for solving \eqref{eq:primal_cvx} and \eqref{eq:dual_cvx}.
Our goal is to design a new scheme to compute a primal-dual sequence $\{\wbar^k\}$ and a parameter sequence $\set{(\gamma_k, \beta_k)}$ such that  $\max\set{0, G_{\gamma_k\beta_k}(\wbar^k;\dot{\wb})}$ converges to zero.

%%%% 4.1. The main scheme.
\subsection{The method}
Given $\bar{\wb}^k := (\xbar^k,\ybar^k) \in\Wc$, we derive a scheme to compute a new point $\bar{\wb}^{k+1} := (\xbar^{k+1}, \ybar^{k+1})$ as follows:
\begin{equation}\label{eq:pd_scheme_2d}
\left\{\begin{array}{ll}
\yhat^k        &  := (1-\tau_k)\ybar^k + \tau_k\yb^{\ast}_{\beta_k}(\Ab\xbar^k;\dot{\yb}),\vspace{0.75ex}\\
\ybar^{k  + 1} & := \prox_{\gamma_{k+1}\bar{L}_{\Ab}^{-1}g^{\ast}}\left( \yhat^k +  \gamma_{k+1}\bar{L}_{\Ab}^{-1}\Ab\xb^{\ast}_{\gamma_{k+1}}(\yhat^k; \dot{\xb})\right),\vspace{0.75ex}\\
\xbar^{k + 1} & := (1  - \tau_k)\xbar^k + \tau_k\xb^{\ast}_{\gamma_{k+1}}(\yhat^k;\dot{\xb}),
\end{array}\right.\tag{\textrm{ADSGARD}}
\end{equation}
where $\tau_k \in (0, 1)$ and the parameters $\beta_k > 0$ and $\gamma_{k+1} > 0$ will be updated in the sequel.
The points  $\xb^{\ast}_{\gamma_{k+1}}(\yhat^k;\dot{\xb})$ and $\yb^{\ast}_{\beta_k}(\Ab\xbar^k;\dot{\yb})$ are computed by \eqref{eq:x_ast} and \eqref{eq:yast_beta}, respectively.
This scheme requires one primal step for $\xb^{\ast}_{\gamma_{k+1}}(\yhat^k;\dot{\xb})$, one dual step for $\yb^{\ast}_{\beta_k}(\Ab\xbar^k;\dot{\yb})$, and one dual proximal-gradient step for $\ybar^{k+1}$.
Since the accelerated step is applied to $g_{\gamma}$, we call this scheme the  \textit{Accelerated Dual Smoothed GAp ReDuction} \eqref{eq:pd_scheme_2d} scheme.

The following lemma, whose proof is in Appendix \ref{apdx:le:maintain_excessive_gap2}, shows that $\wbar^{k+1}$ updated by \ref{eq:pd_scheme_2d} decreases the smoothed gap $G_{\gamma_k\beta_k}(\wbar^k)$ with at least a factor of $(1-\tau_k)$.

% Lemma 4.2.
\begin{lemma}\label{le:maintain_excessive_gap2}
Let   $\wbar^{k+1} := (\xbar^{k+1}, \ybar^{k+1})$ be updated by the \ref{eq:pd_scheme_2d} scheme.
Then, if $\tau_k \in (0, 1]$, $\beta_k$ and $\gamma_k$ are chosen such that $\beta_1 \gamma_1 \geq \bar L_\Ab$ and 
\begin{equation}\label{eq:pd_condition2}
\big(1 + \tau_k/L_{b_{\Xc}}\big)\gamma_{k\!+\!1} \geq \gamma_k, ~~~~~~~ \beta_{k\!+\!1} \geq (1 \!-\! \tau_k)\beta_k,  ~~~~\text{and}~~~~ \frac{\bar L_\Ab}{\gamma_{k+1}} \leq \frac{(1-\tau_k) \beta_k}{\tau_k^2},
\end{equation}
then  $\wbar^{k+1} \in \Wc$ and satisfies $G_{\gamma_{k\!+\!1}\beta_{k\!+\!1}}(\wbar^{k\!+\!1};\dot{\wb}) \leq (1-\tau_k)G_{\gamma_k\beta_k}(\wbar^k;\dot{\wb})\leq 0$.

Let $\tau_0 := 1$. Then, for all $k\geq 1$, if we choose $\tau_k\in (0, 1)$ to be the unique positive solution of the cubic equation $p_3(\tau) := \tau^3/L_{b_{\Xc}} + \tau^2 + \tau_{k-1}^2\tau - \tau_{k-1}^2 = 0$, then $\frac{1}{k+1} \leq \tau_k \leq \frac{2}{k+2}$ for $k\geq 1$.
The parameters $\beta_k$ and $\gamma_k$ computed by $\beta_1 \gamma_1 = \bar L_\Ab$ and 
\begin{equation}\label{eq:update_gamma_beta}
\gamma_{k+1} := \frac{\gamma_k}{1 + \tau_k/L_{b_{\Xc}}}~~~\text{and}~~~\beta_{k+1} := (1-\tau_k)\beta_k,
\end{equation}
satisfy the conditions in \eqref{eq:pd_condition2}.

In addition, if $L_{b_{\Xc}} = 1$, then $\gamma_k \leq \frac{2\gamma_1}{k+1}$ and $\frac{\bar L_\Ab}{2\gamma_1(k+1)} \leq \beta_{k+1} \leq \frac{\beta_1}{k+1}$ for $k\geq 1$.
\end{lemma}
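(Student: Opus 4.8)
The plan is to treat the four assertions of Lemma~\ref{le:maintain_excessive_gap2} separately, beginning with the one–step gap reduction, which is the substantive part, and then reducing the remaining claims to elementary scalar estimates. Throughout I write $M_{\gamma}(\yb;\dot{\xb}) := \min_{\xb}\{ f(\xb) + \iprods{\yb,\Ab\xb} + \gamma b_{\Xc}(\xb,\dot{\xb})\}$, so that $D_{\gamma}(\yb;\dot{\xb}) = M_{\gamma}(\yb;\dot{\xb}) - g^{\ast}(\yb)$ and $-D_{\gamma}$ is the sum of the convex smooth function $-M_{\gamma}$, whose gradient $-\Ab\xb^{\ast}_{\gamma}(\yb;\dot{\xb})$ is $\bar{L}_{\Ab}/\gamma$–Lipschitz (since $b_{\Xc}$ is $1$–strongly convex and $\norm{\Ab}^2 = \bar{L}_{\Ab}$), plus the convex function $g^{\ast}$. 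The dual update in \ref{eq:pd_scheme_2d} is then exactly one proximal–gradient step on $-D_{\gamma_{k+1}}$ from $\yhat^k$ with step size $\gamma_{k+1}/\bar{L}_{\Ab}$, which is the key structural observation driving the whole argument.

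For the one–step reduction I would combine two estimates. On the dual side, the proximal–gradient descent inequality applied at $\ybar^{k+1}$ yields, for every $\yb\in\Yc$, a bound of the form $-D_{\gamma_{k+1}}(\ybar^{k+1}) \le -D_{\gamma_{k+1}}(\yb) + \tfrac{\bar{L}_{\Ab}}{2\gamma_{k+1}}\big(\norm{\yhat^k - \yb}_{\Yc}^2 - \norm{\ybar^{k+1}-\yb}_{\Yc}^2\big)$; the condition $\bar{L}_{\Ab}/\gamma_{k+1}\le (1-\tau_k)\beta_k/\tau_k^2$ is precisely what rescales the factor $\bar{L}_{\Ab}/\gamma_{k+1}$ so that the momentum weight $\tau_k$ can be absorbed. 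On the primal side, convexity of $P_{\beta_{k+1}}(\cdot;\dot{\yb})$ together with $\xbar^{k+1} = (1-\tau_k)\xbar^k + \tau_k\xb^{\ast}_{\gamma_{k+1}}(\yhat^k;\dot{\xb})$ gives $P_{\beta_{k+1}}(\xbar^{k+1}) \le (1-\tau_k)P_{\beta_{k+1}}(\xbar^k) + \tau_k P_{\beta_{k+1}}(\xb^{\ast}_{\gamma_{k+1}}(\yhat^k))$. I would then use the definition of $M_{\gamma_{k+1}}$ to rewrite the last term through the bilinear coupling $\iprods{\Ab\xb,\yb}$, which is the device that links the two smoothings, and invoke $\beta_{k+1}\ge(1-\tau_k)\beta_k$ and $(1+\tau_k/L_{b_{\Xc}})\gamma_{k+1}\ge\gamma_k$, which ensure that the Bregman penalties $\beta_{k+1}b_{\Yc}$ and $\gamma_{k+1}b_{\Xc}$ entering $G_{\gamma_{k+1}\beta_{k+1}}$ dominate $(1-\tau_k)$ times the corresponding penalties in $G_{\gamma_k\beta_k}$. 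Adding the two estimates and simplifying the cross terms, the conditions in \eqref{eq:pd_condition2} are exactly what is needed to bound the right-hand side by $(1-\tau_k)\big[P_{\beta_k}(\xbar^k) - D_{\gamma_k}(\ybar^k)\big] = (1-\tau_k)G_{\gamma_k\beta_k}(\wbar^k)$. The conclusion $G_{\gamma_{k+1}\beta_{k+1}}(\wbar^{k+1})\le 0$ then follows by induction: the base case uses $\tau_0 = 1$ and the excessive–gap inequality $G_{\gamma_1\beta_1}(\wbar^1)\le 0$, which holds precisely because $\beta_1\gamma_1\ge\bar{L}_{\Ab}$, and contraction by $1-\tau_k\in[0,1)$ preserves nonpositivity. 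I expect this coupling step, i.e.\ correctly matching the simultaneous decrease of both $\beta$ and $\gamma$ against the single step–size/momentum budget, to be the main obstacle.

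For the scalar recurrence, set $p_3(\tau) = \tau^3/L_{b_{\Xc}} + \tau^2 + \tau_{k-1}^2\tau - \tau_{k-1}^2$. Since $p_3(0) = -\tau_{k-1}^2 < 0$, $p_3(\tau_{k-1}) = (1/L_{b_{\Xc}}+1)\tau_{k-1}^3 > 0$, and $p_3'(\tau) = 3\tau^2/L_{b_{\Xc}} + 2\tau + \tau_{k-1}^2 > 0$ on $(0,\infty)$, there is a unique positive root $\tau_k$, lying in $(0,\tau_{k-1})$. Rewriting the equation as $\tau_k^2(1 + \tau_k/L_{b_{\Xc}}) = (1-\tau_k)\tau_{k-1}^2$ and using $L_{b_{\Xc}}\ge 1$, $\tau_k\le 1$ gives $\tau_k^2 \le (1-\tau_k)\tau_{k-1}^2$, whence $1/\tau_k \ge 1/\tau_{k-1} + 1/2$ and, by induction from $\tau_0 = 1$, $\tau_k \le 2/(k+2)$. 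For the lower bound I would show $p_3(1/(k+1)) \le 0$ under the inductive hypothesis $\tau_{k-1}\ge 1/k$: a direct computation reduces the sign to $\tfrac{1}{L_{b_{\Xc}}(k+1)} - \tfrac1k \le 0$, which holds because $L_{b_{\Xc}}\ge 1$, and monotonicity of $p_3$ then yields $\tau_k\ge 1/(k+1)$.

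For the third assertion, the first two conditions in \eqref{eq:pd_condition2} hold with equality by the very definition of the updates \eqref{eq:update_gamma_beta}. For the third, I would prove by induction the identity $\beta_{k+1}\gamma_{k+1} = \bar{L}_{\Ab}\tau_k^2$: starting from $\beta_1\gamma_1 = \bar{L}_{\Ab} = \bar{L}_{\Ab}\tau_0^2$ and using $\gamma_{k+1}\beta_{k+1} = \gamma_k\beta_k\cdot\tfrac{1-\tau_k}{1+\tau_k/L_{b_{\Xc}}}$, the cubic equation for $\tau_k$ is exactly what forces $\tau_{k-1}^2(1-\tau_k) = \tau_k^2(1+\tau_k/L_{b_{\Xc}})$, hence $\beta_{k+1}\gamma_{k+1} = \bar{L}_{\Ab}\tau_k^2$; since $\beta_{k+1} = (1-\tau_k)\beta_k$, the third condition $\bar{L}_{\Ab}\tau_k^2 \le (1-\tau_k)\beta_k\gamma_{k+1}$ then holds with equality. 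Finally, for the explicit rates with $L_{b_{\Xc}} = 1$, I would telescope $\beta_{k+1} = \beta_1\prod_{j=1}^k(1-\tau_j)$ and $\gamma_{k+1} = \gamma_1\prod_{j=1}^k(1+\tau_j)^{-1}$: the bounds $\tau_j\ge 1/(j+1)$ give $\prod_{j=1}^k(1-\tau_j)\le 1/(k+1)$ and $\prod_{j=1}^k(1+\tau_j)^{-1}\le 2/(k+2)$, yielding $\beta_{k+1}\le\beta_1/(k+1)$ and $\gamma_k\le 2\gamma_1/(k+1)$, while the identity $\beta_{k+1} = \bar{L}_{\Ab}\tau_k^2/\gamma_{k+1}$ together with $\tau_k\ge 1/(k+1)$ and $\gamma_{k+1}\le 2\gamma_1/(k+1)$ gives the lower bound $\beta_{k+1}\ge \bar{L}_{\Ab}/(2\gamma_1(k+1))$.
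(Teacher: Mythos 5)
Your treatment of the scalar sequences (existence and uniqueness of the positive root of $p_3$, the bound $\tau_k\le 2/(k+2)$ via $1/\tau_k\ge 1/\tau_{k-1}+1/2$, the inductive lower bound $\tau_k\ge 1/(k+1)$, the invariant $\beta_{k+1}\gamma_{k+1}=\bar{L}_{\Ab}\tau_k^2$, and the telescoped rates for $L_{b_{\Xc}}=1$) is correct and equivalent to, in places cleaner than, the paper's. Your reading of the dual update as one proximal-gradient step on $-D_{\gamma_{k+1}}(\cdot;\dot{\xb})$ with step size $\gamma_{k+1}/\bar{L}_{\Ab}$ is also exactly the mechanism the paper exploits.

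The gap is in your primal-side step of the one-iteration reduction. You split $P_{\beta_{k+1}}(\xbar^{k+1};\dot{\yb})\le(1-\tau_k)P_{\beta_{k+1}}(\xbar^k;\dot{\yb})+\tau_kP_{\beta_{k+1}}(\xhat^{\ast}_{k+1};\dot{\yb})$ by convexity, where $\xhat^{\ast}_{k+1}:=\xb^{\ast}_{\gamma_{k+1}}(\yhat^k;\dot{\xb})$; this closes the maximization over the dual variable separately in each term, and thereby destroys precisely the object on which the condition $\bar{L}_{\Ab}/\gamma_{k+1}\le(1-\tau_k)\beta_k/\tau_k^2$ must act. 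The paper instead keeps the maximization joint across the convex combination: since $\iprods{\Ab\xbar^{k+1},\yb}$ is linear in the primal argument and $-\beta_{k+1}b_{\Yc}(\yb,\dot{\yb})\le-(1-\tau_k)\beta_kb_{\Yc}(\yb,\dot{\yb})$ (this is the only place $\beta_{k+1}\ge(1-\tau_k)\beta_k$ is used), one obtains a single $\max_{\yb}$ of $(1-\tau_k)[\,\cdots-\beta_kb_{\Yc}(\yb,\dot{\yb})\,]+\tau_k[\,\cdots\,]$, and the $\beta_k$-strong concavity of the first bracket around its maximizer $\ybar^{\ast}_k=\yb^{\ast}_{\beta_k}(\Ab\xbar^k;\dot{\yb})$ leaves the term $-\tfrac{(1-\tau_k)\beta_k}{2}\norm{\yb-\ybar^{\ast}_k}_{\Yc}^2$ \emph{inside} the max. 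Under the substitution $\ub=(1-\tau_k)\ybar^k+\tau_k\yb$, for which $\ub-\yhat^k=\tau_k(\yb-\ybar^{\ast}_k)$, this equals $-\tfrac{(1-\tau_k)\beta_k}{2\tau_k^2}\norm{\ub-\yhat^k}_{\Yc}^2$, and the third condition in \eqref{eq:pd_condition2} is exactly what makes it dominate the quadratic $\tfrac{\bar{L}_{\Ab}}{2\gamma_{k+1}}\norm{\ub-\yhat^k}_{\Yc}^2$ produced by the dual proximal-gradient step. In your version that negative quadratic does not exist at all: $P_{\beta_{k+1}}(\xbar^k;\dot{\yb})$ is an already-maximized value, so the positive term $\tfrac{\bar{L}_{\Ab}}{2\gamma_{k+1}}\norm{\yhat^k-\yb}_{\Yc}^2$ in your dual estimate has nothing to cancel it, and the bound by $(1-\tau_k)G_{\gamma_k\beta_k}(\wbar^k;\dot{\wb})$ cannot be reached.

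Two further casualties of the same split confirm that it is not cosmetic. First, your right-hand side contains $(1-\tau_k)P_{\beta_{k+1}}(\xbar^k;\dot{\yb})$ while the target contains $(1-\tau_k)P_{\beta_k}(\xbar^k;\dot{\yb})$; since $\beta_{k+1}<\beta_k$ we have $P_{\beta_{k+1}}(\xbar^k;\dot{\yb})\ge P_{\beta_k}(\xbar^k;\dot{\yb})$, and bridging the two (via convexity of $\beta\mapsto g_{\beta}$) creates an additional nonnegative term of size $(1-\tau_k)(\beta_k-\beta_{k+1})b_{\Yc}(\cdot,\dot{\yb})$ with nothing left in the budget to absorb it -- the condition $\beta_{k+1}\ge(1-\tau_k)\beta_k$ does not make ``penalties dominate'' once the max has been split. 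Second, your dual inequality compares to the plain value $-D_{\gamma_{k+1}}(\yb)$, i.e.\ it only uses convexity, whereas the argument needs the co-coercivity refinement: that refinement supplies the extra quadratic $\tfrac{(1-\tau_k)\gamma_{k+1}}{2}\norm{\xb^{\ast}_{\gamma_{k+1}}(\ybar^k;\dot{\xb})-\xhat^{\ast}_{k+1}}_{\Xc}^2$ which, combined with $\tau_k\gamma_{k+1}b_{\Xc}(\xhat^{\ast}_{k+1},\dot{\xb})$ through the parallelogram-type inequality, absorbs the shift term $(1-\tau_k)(\gamma_k-\gamma_{k+1})b_{\Xc}(\xb^{\ast}_{\gamma_{k+1}}(\ybar^k;\dot{\xb}),\dot{\xb})$ arising from $\gamma_k\to\gamma_{k+1}$ under the first condition of \eqref{eq:pd_condition2}. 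To close the proof you must keep the dual maximization joint across the convex combination and carry the strong-concavity and co-coercivity quadratics explicitly, which is what the paper's appendix proof does.
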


%%%% 4.2. A primal-dual algorithmic template.
\subsection{The primal-dual algorithmic template}\label{subsec:algorithm}
We combine all the ingredients presented in the previous subsection to obtain a primal-dual algorithmic template for solving either \eqref{eq:primal_cvx} or \eqref{eq:constr_cvx} as shown in Algorithm \ref{alg:pd_alg2}.

%%%%%%%%%%%%%%%%%%%%%%%%%%%%%%%%%%%%%%%%%%%%%%%%
%+ Algorithm 4.1.
%%%%%%%%%%%%%%%%%%%%%%%%%%%%%%%%%%%%%%%%%%%%%%%%
\begin{algorithm}[!ht]\caption{(\textit{Accelerated Dual Smoothed GAp ReDuction  \eqref{eq:pd_scheme_2d}})}\label{alg:pd_alg2}
\begin{normalsize}
\begin{algorithmic}[1]
\Statex{\hskip-4ex}\textbf{Initialization:} 
\State Choose $\gamma_1 > 0$ (e.g., $\gamma_1 := \sqrt{\bar{L}_{\Ab}}$, where $\bar{L}_{\Ab}$ is given by \eqref{eq:LA_def}). Set  $\beta_1 := \frac{ \bar{L}_{\Ab}}{\gamma_1}$ and $\tau_0 := 1$. 
\State Take  an initial point $\bar{\yb}^{\ast}_0 := \dot{\yb} \in\Yc$.
\Statex{\hskip-4ex}\textbf{For}~{$k=0$ {\bfseries to} $k_{\max}$, \textbf{perform:}}
\State\label{eq:update_yhat}Update $\yhat^k := (1 - \tau_k)\ybar^k + \tau_k\ybar^{\ast}_k$.
\State\label{eq:2d_step5}Compute $\xhat^{\ast}_{k+1}$ \textit{in parallel} with
\begin{equation*} 
\xhat^{\ast}_{k+1} := \displaystyle\argmin_{\xb\in\Xc}\big\{f(\xb) + \iprods{\Ab^{\top}\yhat^k, \xb} + \gamma_{k+1} b_{\Xc}(\xb,\dot{\xb}) \big\}.
\end{equation*}
\State\label{eq:2d_step6}Update the dual vector
\begin{equation*}
\ybar^{k  + 1} := \prox_{\gamma_{k+1} \bar L_\Ab^{-1} g^{\ast}}\left( \yhat^k +  \gamma_{k+1} \bar L_\Ab^{-1} \Ab\xhat^{\ast}_{k+1}\right).
\end{equation*}
\State\label{eq:2d_step7}Update the primal vector: $\xbar^{k+1} := (1 - \tau_k)\xbar^k + \tau_k\xhat^{\ast}_{k+1}$.
\State\label{eq:2d_step3}Compute 
\begin{equation*}
\ybar^{\ast}_{k+1} := \mathrm{arg}\max_{\yb\in\Yc} \set{ \iprods{\Ab\xbar^{k+1}, \yb}  - g^{\ast}(\yb) - \beta_{k+1} b_{\Yc}(\yb,\dot{\yb}) }. 
\end{equation*}
\State Compute $\tau_{k+1} \in (0, 1)$ the unique positive root of $\tau^3/L_{b_{\Yc}} + \tau^2 +  \tau_{k}^2 \tau -  \tau_{k}^2 = 0$. 
\State Update $\gamma_{k+2} := \frac{\gamma_{k+1}}{1 + L_{b_{\Xc}}^{-1} \tau_{k+1}}$, and  $\beta_{k+2} := (1-\tau_{k+1})\beta_{k+1}$. 
\vspace{0.5ex}
\Statex{\hskip-4ex}\textbf{End for}
\end{algorithmic}
\end{normalsize}
\end{algorithm}
%%%%%%%%%%%%%%%%%%%%%%%%%%%%%%%%%%%%%%%%%%%%%%%%

Since $\tau_0 = 1$, Step~\ref{eq:update_yhat} shows that $\hat{\yb}^0 = \bar{\yb}^{\ast}_0$, and while Step~\ref{eq:2d_step7} leads to $\bar{\xb}^1 = \hat{\xb}^{\ast}_1$.
The main steps of Algorithm \ref{alg:pd_alg2} are Steps \ref{eq:2d_step5}, \ref{eq:2d_step6} and~\ref{eq:2d_step3}, where we need to solve the subproblem \eqref{eq:x_ast}, and to update two dual steps, respectively.
The first dual step requires the proximal operator $\prox_{\rho g^{\ast}}$ of $g^{\ast}$, while the second one computes $\ybar^*_{k+1} = \yb^{\ast}_{\beta_{k+1}}(\Ab\xbar^{k+1};\dot{\yb})$.

When $g = \delta_{\set{\cb}}$, the indicator of $\set{\cb}$ in the constrained problem \eqref{eq:constr_cvx},  we have 
\begin{equation*}
\yb^{\ast}_{\beta_k}(\Ab\xbar^k;\dot{\yb}) = \nabla{b^{*}_{\Yc}}\left(\beta_k^{-1}(\Ab\xbar^k - \cb), \dot{\yb}\right)~~\text{and}~~\ybar^{k+1} := \yhat^k +  \gamma_{k+1}\left(\Ab\xb^{\ast}_{\gamma_{k+1}}(\yhat^k; \dot{\xb}) - \cb\right).
\end{equation*}
The first dual step only requires one matrix-vector multiplication $\Ab\xb$.
Clearly, by Step~\ref{eq:2d_step7}, it follows that $\Ab\xbar^{k+1} - \cb = (1-\tau_k)(\Ab\xbar^k - \cb) + \tau_k(\Ab\xhat^{\ast}_{k+1} - \cb)$, 
and by Step~\ref{eq:2d_step3}, we have $\ybar^{\ast}_k = \yb^{\ast}_{\beta_k}(\Ab\xbar^k;\dot{\yb})=  \nabla{b^{\ast}_{\Yc}}\left(\beta_k^{-1}(\Ab\xbar^k - \cb),\dot{\yb}\right)$, which is equivalent to $\Ab\xbar^k - \cb = \beta_k\nabla{b_{\Yc}}(\ybar^{\ast}_k,\dot{\yb})$.
Hence, $\Ab\xbar^{k+1} - \cb = (1-\tau_k)\beta_k\nabla{b}_{\Yc}(\ybar^{\ast}_k,\dot{\yb}) + \frac{\tau_k}{\gamma_{k+1}}(\ybar^{k+1} - \yhat^k)$ due to Step~\ref{eq:2d_step6}.
Finally, we can derive an update rule for $\bar{\yb}^{\ast}_{k+1}$ as
\begin{equation}\label{eq:update_yopt_k}
\ybar^{*}_{k+1} := \nabla{b_{\Yc}^{*}}\Big(\beta_{k+1}^{-1}\big( (1-\tau_k)\beta_k\nabla{b}_{\Yc}(\ybar^{\ast}_k, \dot{\yb})  + \frac{\tau_k}{\gamma_{k+1}}(\ybar^{k+1} - \yhat^k)\big), \dot{\yb}\Big).
\end{equation}

%% Per-iteration complexity of Algorithm.
\paragraph{Per-iteration complexity of Algorithm~\ref{alg:pd_alg2}}
From the above analysis, we can conclude that the per-iteration complexity of Algorithm~\ref{alg:pd_alg1} consists of
\begin{itemize}
\item One adjoint matrix-vector multiplication $\Ab^{\top}\yb$, and one mirror step in $f$ at Step \ref{eq:2d_step5} to compute $\xhat^{\ast}_{k+1}$.
If $f$ is decomposable, then Step~\ref{eq:2d_step5} can be  implemented \textit{in parallel}.
\item One matrix-vector multiplication $\Ab\xb$, one proximal step of $g^{\ast}$ at Step \ref{eq:2d_step6} to compute $\ybar^{k  + 1}$, and one mirror step of $g^{\ast}$ at Step~\ref{eq:2d_step3} to compute $\ybar^{\ast}_{k+1}$. 
If $g = \delta_{\set{\cb}}$ and $p_{\Yc}(\cdot) := (1/2)\norm{\cdot}_2^2$, then computing $\ybar^{k  + 1}$ using \eqref{eq:update_yopt_k}  requires only one $Ax$.
\end{itemize}

%%%% 4.3. Worst-case complexity estimates.
\subsection{Convergence analysis}
The following theorem shows the convergence of Algorithm~\ref{alg:pd_alg2}.
For the constrained setting \eqref{eq:constr_cvx}, we still have the lower bound on $f(\xbar^k) - \fopt$ as in Theorem \ref{th:convergence_accPD_alg}, i.e. $-\norm{\yopt}_{\Yc}\Vert\Ab\xbar^k \!-\! \cb\Vert_{\Yc,\ast} \leq f(\xbar^k) - f^{\star}$ for any $\xbar^k\in\Xc$ and $\yopt\in\Yopt$.

% Theorem 6.
\begin{theorem}\label{th:convergence_A1_const}
Suppose that $g= \delta_{\set{\cb}}$.
Let $b_{\Xc}$ be chosen such that $L_{b_{\Xc}} = 1$, and  $\{\wbar^k\}$  be the sequence generated by Algorithm \ref{alg:pd_alg2} for solving \eqref{eq:constr_cvx}, where $\gamma_1 > 0$ is given.
Then, the following bounds for \eqref{eq:constr_cvx} hold:
\begin{equation}\label{eq:convergence_A2b}
{\!\!\!}\left\{\begin{array}{ll}
f(\xbar^k) \!-\! f^{\star} &\geq - \norm{\yopt}_\Yc \norm{A \bar x^k - \cb}_{\Yc,*},\vspace{1ex}\\
f(\xbar^k) - f^{\star} &\leq \frac{2\gamma_1}{k+1}b_{\Xc}(\xb^{\star},\dot{\xb}) + \frac{\bar{L}_{\Ab}}{\gamma_1k}b_{\Yc}(\yopt,\dot{\yb}) + \norm{\yopt}_{\Yc}\norm{\Ab\xbar^k - \cb}_{\Yc,\ast},\vspace{1ex}\\
\Vert\Ab\xbar^k \!- \cb\Vert_{\Yc,\ast}  &\leq \frac{\bar{L}_{\Ab}}{\gamma_1k} L_{b_{\Yc}} \Big[ \norm{\yopt - \dot{\yb}}_{\Yc}  + \big(\norm{\yopt - \dot{\yb}}_{\Yc}^2 + \frac{8\gamma_1^2}{L_{b_{\Yc}}}b_{\Xc}(\xb^{\star},\dot{\xb}) \big)^{1/2}\Big].
\end{array}\right.{\!\!\!}
\end{equation}
\end{theorem}

%%% Proof of Theorem 6.
\begin{proof}
This set of inequalities is a consequence of Lemmas~\ref{le:excessive_gap_aug_Lag_func} and~\ref{le:maintain_excessive_gap2}  using $\beta_k \leq \frac{\beta_1}{k}$, $\gamma_k\leq \frac{2\gamma_1}{k+1}$ and $\frac{\gamma_k}{\beta_k} \leq \frac{4\gamma_1^2k}{k+1} \leq 4\gamma_1^2$.
\end{proof}
% End of the proof.

%% Theorem 7.
\begin{theorem}\label{th:convergence_A1}
Suppose that $g$ is Lipschitz continuous as in Theorem~\ref{th:convergence_accPD_alg}.
Let $b_{\Xc}$ be chosen such that $L_{b_{\Xc}} = 1$, and  $\{\wbar^k\}$  be the sequence generated by Algorithm \ref{alg:pd_alg2} for solving \eqref{eq:primal_cvx}, where $\gamma_1 > 0$ is given.
Then, the following convergence bound holds
\begin{equation}\label{eq:convergence_2d_Px}
P(\xbar^k) - \Popt \leq \frac{2\gamma_1}{k+1}b_{\Xc}(\xb^{\star}, \dot{\xb}) + \frac{2\bar{L}_{\Ab}}{\gamma_1 k}D_\Yc.
\end{equation}
\end{theorem}

%% The proof of Theorem 7.
\begin{proof}
Since $S_{\beta}(\xb;\dot{\yb}) \leq G_{\gamma\beta}(\wb;\dot{\wb}) + \gamma b_{\Xc}(\xopt,\dot{\xb})$, using Lemma~\ref{le:maintain_excessive_gap2} we can show that $S_{\beta_k}(\xbar^k;\dot{\yb}) \leq G_{\gamma_k\beta_k}(\wbar^k;\dot{\wb}) + \gamma_k b_{\Xc}(\xopt, \dot{\xb}) \leq \gamma_k b_{\Xc}(\xopt,\dot{\xb})$.
Similar to the proof of Theorem~\ref{th:convergence_accPD_alg}, we obtain the bound \eqref{eq:convergence_2d_Px} for the objective residual of \eqref{eq:primal_cvx}.
\end{proof}
%% End of the proof.

Similar to Theorem~\ref{th:convergence_accPD_alg}, we can simplify the bound \eqref{eq:convergence_A2b} to obtain a simple bound as in \eqref{eq:simple_bound1}, where we omit the details here.
The choice of $\gamma_1$ and $\beta_1$ in Theorem \ref{th:convergence_A1} also trades off the primal objective residual and the primal feasibility gap.

%%%% 4.4. The choice of the smoother.
\subsection{The choice of smoothers}
For this algorithm, one needs to choose a norm $\norm{\cdot}_\Xc = \norm{\cdot}_\Sb$ and a smoother $p_\Xc$ such that $p_{\Xc}$ is strongly convex with respect to the norm  $\norm{\cdot}_\Sb$.
One possibility is to choose $\norm{\cdot}_\Sb$ in order to have a simple formula for $\hat{\xb}^{\ast}_{k+1} =  \xb^{*}_\gamma(\hat y^k; \dot x)$. 
A classical choice is a diagonal $\Sb$ and $b_\Xc(\cdot,\dot{\xb}) = \frac 12 \norm{\cdot - \dot{\xb}}_\Sb^2$ is a quadratic function for a given $\dot{\xb} \in\Xc$.

If  $f$ is decomposable as $f(\xb) = \sum_{i=1}^Nf_i(\xb_i)$ and we choose $b_{\Xc}(\xb, \dot{\xb}) := \sum_{i=1}^Nb_{\Xc_i}(\xb_i, \dot{\xb}_i)$, 
then the computation of $\hat{\xb}^{\ast}_{k+1}$ at Step~\ref{eq:2d_step5} of Algorithm~\ref{alg:pd_alg2} can be carried out in \textit{parallel}. 

Another possibility is to choose $\Sb = \Ab$ and $p_\Xc(\cdot) = \frac 12 \norm{\cdot}_\Sb^2$.
In that case, the computation of $x^*_{k+1}$ may require an iterative sub-solver 
but we are allowed to take $\dot x = \xopt$. Indeed, as $A\xopt = c$, we have that
for all $\xb$, $b_\Xc(x, \xopt) = \frac 12 \norm{x - \xopt}_{\Ab}^2 = \frac 12 (Ax - c)^\top (A x -c)$. Hence, we can consider $\xopt$ as a center even though we do not know it.
We shall develop the consequences of such a choice in the Section~\ref{subsec:alsmoother}.
%%% End of Section 4.

%%% 6. Connection to existing methods
\section{Special instances of  the primal-dual gap reduction framework}\label{sec:variants}
We specify our \ref{eq:pd_scheme_2d} scheme to handle two special cases: augmented Lagrangian method and strongly convex objective.
Then, we provide an extension of our algorithms to a general cone constraint.

%% 6.1. Augmented Lagrangian function.
\subsection{Accelerated smoothing augmented Lagrangian gap reduction method}
\label{subsec:alsmoother}
The augmented Lagrangian (AL) method is a classical optimization technique, and has widely been used in various applications due to its emergingly practical performance.
In this section, we customize  Algorithm~\ref{alg:pd_alg2} using \ref{eq:pd_scheme_2d} to solve the constrained convex problem \eqref{eq:constr_cvx}.
The inexact variant of this algorithm can be found in our early technical report \cite[Section 5.3]{Tran-Dinh2014a}.

\paragraph{The augmented Lagrangian smoother}

We choose here $p_{\Xc}(\cdot) = \norm{\cdot}^2_\Xc = \norm{\cdot}^2_A$, $p_{\Yc}(\cdot) = \norm{\cdot}^2_\Yc = \norm{\cdot}_{\Id}^2$ and $\dot{x} = \xopt$ and $b_{\Xc}(\xb,\dot{\xb}) := (1/2)\norm{\Ab(\xb - \xopt)}_{\Yc,\ast}^2 = (1/2)\norm{\Ab\xb - \cb}_{\Yc,\ast}^2$. This is indeed the augmented term for the Lagrange function of \eqref{eq:constr_cvx}.
Note that even though $\dot x$ is unknown, $b_{\Xc}(\xb,\dot{\xb})$
can be computed easily using the equality $A \xopt = c$.

We specify the primal-dual \ref{eq:pd_scheme_2d}  scheme with the augmented Lagrangian smoother for fixed $\gamma_{k+1} = \gamma_0 > 0$ as follows:
\begin{equation}\label{eq:1p2d_augLM}\tag{ASALGARD}
\left\{\begin{array}{ll}
\yhat^k       &:= (1-\tau_k)\ybar^k + \tau_k\yb^{\ast}_{\beta_k}(\Ab\xbar^k;\dot{\yb}), \vspace{0.75ex}\\
\xhat^{\ast}_{\gamma_0}(\hat{\yb}^k) &:= \displaystyle\argmin_{\xb\in\Xc}\big\{f(\xb) + \iprods{\yhat^k, \Ab\xb - \cb} + \frac{\gamma_0}{2}\norm{\Ab\xb - \cb}_{\Yc, *}^2\big\}, \vspace{0.75ex}\\
\ybar^{k+1} &:= \yhat^k + \gamma_0(\Ab\xhat^{\ast}_{\gamma_0}(\hat{\yb}^k) - \cb), \vspace{0.75ex}\\
\xbar^{k+1} &: = (1-\tau_k)\xbar^k + \tau_k\xhat^{\ast}_{\gamma_0}(\hat{\yb}^k),
\end{array}\right.
\end{equation}
where $\tau_k \in (0, 1)$, $\gamma_0 > 0$ is the penalty (or the primal smoothness) parameter, and $\beta_k$ is the dual smoothness parameter. 
As a result, this method is called \textit{Accelerated Smoothing Augmented Lagrangian GAp ReDuction} \eqref{eq:1p2d_augLM} scheme. 

This scheme consists of two dual steps at lines 1 and 3.
However, we can combine these steps as in \eqref{eq:update_yopt_k} so that it requires only one matrix-vector multiplication $\Ab\xb$.
Consequently, the  per-iteration complexity of \ref{eq:1p2d_augLM} remains essentially the same as the standard augmented Lagrangian method \cite{Bertsekas1996d}. 

%%% The update rule for parameters
\paragraph{The update rule for parameters}
In our augmented Lagrangian method, we only need to update $\tau_k$ and $\beta_k$ such that $\beta_{k+1} \geq (1-\tau_k)\beta_k$ and $\gamma_0\beta_k(1-\tau_k) \geq \tau_k^2$.
Using the equality in these conditions and defining $\tau_k := t_k^{-1}$, we can derive 
\begin{equation}\label{eq:update_tau_beta7}
t_{k+1} := \frac{1}{2}\Big(1 + \sqrt{1 + 4t_k^2}\Big)~~~\text{and}~~~\beta_{k+1} := \frac{(t_k -1)}{t_k}\beta_k.
\end{equation}
Here, we fix $\beta_1 > 0$ and choose $t_0 := 1$.

%%% The algorithm template
\paragraph{The algorithm template}
We modify Algorithm \ref{alg:pd_alg2} to obtain the following augmented Lagrangian variant, Algorithm \ref{alg:aug_Lag_alg}.

%%%%%%%%%%%%%%%%%%%%%%%%%%%%%%%%%%%%%%%%%%%%%%%%
%+ Algorithm 4.2.
%%%%%%%%%%%%%%%%%%%%%%%%%%%%%%%%%%%%%%%%%%%%%%%%
\begin{algorithm}[!ht]\caption{(\textit{Accelerated Smoothing Augmented Lagrangian GAp ReDuction} \eqref{eq:1p2d_augLM})}\label{alg:aug_Lag_alg}
\begin{normalsize}
\begin{algorithmic}[1]
\Statex{\hskip-4ex}\textbf{Initialization:} 
\State Choose an initial value $\gamma_0 > 0$ and $\beta_0 := 1$. Set $t_0 := 1$ and $\beta_1 := \gamma_0^{-1}$.
\State  Choose an initial point $(\xbar^0, \ybar^0)\in\Wc$.
\Statex{\hskip-4ex}\textbf{For}~{$k=0$ {\bfseries to} $k_{\max}$, \textbf{perform:}}
\vspace{1ex}
\State Update $\yb^{\ast}_{\beta_k}(\xbar^k;\dot{\yb}) :=  \nabla{b^{*}_{\Yc}}\left(\beta_k^{-1}(\Ab\xbar^k - \cb), \dot{\yb}\right)$. 
\State Update 
\begin{equation}\label{eq:augLag_cvx_subprob} 
\xhat^{\ast}_{\gamma_0}(\hat{\yb}^k) := \displaystyle\argmin_{\xb\in\Xc}\big\{ f(\xb) + \iprods{\yhat^k, \Ab\xb - \cb} + \frac{\gamma_0}{2}\norm{\Ab\xb - \cb}_{\Yc,\ast}^2 \big\}.
\end{equation}
\State Update $\ybar^{k+1} := \yhat^k + \gamma_0(\Ab\xhat^{\ast}_{\gamma_0}(\hat{\yb}^k) - \cb)$ and $\xbar^{k+1} : = (1-t_k^{-1})\xbar^k + t_k^{-1}\xhat^{\ast}_{\gamma_0}(\hat{\yb}^k)$.
\State Update $t_{k+1} := 0.5\left( 1 + \sqrt{1 + 4t_k^2}\right)$ and $\beta_{k+2} := (t_{k+1} - 1)t_{k+1}^{-1}\beta_{k+1}$.
\vspace{1ex}
\Statex{\hskip-4ex}\textbf{End for}
\end{algorithmic}
\end{normalsize}
\end{algorithm}
%%%%%%%%%%%%%%%%%%%%%%%%%%%%%%%%%%%%%%%%%%%%%%%%

\noindent The main step of Algorithm \ref{alg:aug_Lag_alg} is the solution of the primal convex subproblem~\eqref{eq:augLag_cvx_subprob}.
In general, solving this subproblem remains challenging due to the non-separability of the quadratic term $\norm{\Ab\xb-\cb}^2_{\Yc,\ast}$.
We can numerically solve it by using either alternating direction optimization methods or other first-order methods.
The convergence analysis of inexact augmented Lagrangian methods can be found in \cite{Nedelcu2014}.

%%% Convergence guarantee.
\paragraph{Convergence guarantee}
The following proposition shows the convergence of  Algorithm  \ref{alg:aug_Lag_alg}, whose proof is moved to Appendix \ref{apdx:th:convergence_A2}.

%% Proposition 8.
\begin{proposition}\label{th:convergence_A2}
Let  $\{\wbar^k\}$ be the sequence generated by Algorithm \ref{alg:aug_Lag_alg}.
Then, we have
\begin{equation}\label{eq:convergence_A1}
\left\{\begin{array}{rcl}
 &-\frac{8 L_{b_\Yc}\norm{\yopt}_\Yc \norm{\yopt - \dot{\yb}}_{\Yc}}{\gamma_0 (k+2)^2}  \leq f(\xbar^k) \!-f^{\star} \leq \frac{8 L_{b_\Yc}\norm{\yopt}_\Yc \norm{\yopt - \dot{\yb}}_{\Yc}+ 4b_{\Yc}(\yopt, \dot{\yb})}{\gamma_0 (k+2)^2},\vspace{1ex} \\
 &\Vert\Ab\xbar^k \!-\! c\Vert_{\Yc,\ast} \leq
 \frac{ 8 L_{b_\Yc} \norm{\yopt - \dot{\yb}}_{\Yc}}{\gamma_0(k+2)^2}. 
\end{array}\right.
\end{equation}
As a consequence, the worst-case iteration-complexity of Algorithm \ref{alg:aug_Lag_alg} to achieve an $\varepsilon$-primal solution $\xbar^k$ for \eqref{eq:constr_cvx} is $\mathcal{O}\left( \sqrt{\frac{b_{\Yc}(\yb^{\star},\dot{\yb})}{\gamma_0\varepsilon}}\right)$.
\end{proposition}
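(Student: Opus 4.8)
The plan is to recognize the \eqref{eq:1p2d_augLM} scheme as the special instance of the \ref{eq:pd_scheme_2d} scheme in which the dual-smoothness parameter is frozen at $\gamma_{k+1}\equiv\gamma_0$ and the primal Bregman distance is the augmented-Lagrangian smoother $b_{\Xc}(\xb,\dot{\xb})=\tfrac{1}{2}\norm{\Ab\xb-\cb}_{\Yc,\ast}^2$ with $\dot{\xb}=\xopt$. First I would note that the choice $\Sb=\Ab$ forces $\bar{L}_{\Ab}=1$, since then $\norm{\xb}_{\Xc}=\norm{\Ab\xb}_{\Yc,\ast}$. With $\gamma_{k+1}=\gamma_k=\gamma_0$, the first requirement in \eqref{eq:pd_condition2} is automatic, and the remaining two reduce to $\beta_{k+1}\geq(1-\tau_k)\beta_k$ and $\gamma_0\beta_k(1-\tau_k)\geq\tau_k^2$, together with the initialization $\beta_1\gamma_1\geq\bar{L}_{\Ab}=1$. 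A direct check shows that the updates \eqref{eq:update_tau_beta7} with $\tau_k=t_k^{-1}$, $t_0=1$, $\gamma_1=\gamma_0$, $\beta_1=\gamma_0^{-1}$ meet all of these (in fact with equality). Lemma~\ref{le:maintain_excessive_gap2} then delivers the excessive-gap inequality $G_{\gamma_0\beta_k}(\wbar^k;\dot{\wb})\leq 0$ for every $k$.

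The crux is a simplification special to this smoother. Since $\dot{\xb}=\xopt$ and $\Ab\xopt=\cb$, we have $b_{\Xc}(\xopt,\dot{\xb})=\tfrac{1}{2}\norm{\Ab\xopt-\cb}_{\Yc,\ast}^2=0$, so the first bound of \eqref{eq:S_bound} upgrades the gap inequality to $S_{\beta_k}(\xbar^k)\leq G_{\gamma_0\beta_k}(\wbar^k;\dot{\wb})+\gamma_0 b_{\Xc}(\xopt,\dot{\xb})=G_{\gamma_0\beta_k}(\wbar^k;\dot{\wb})\leq 0$. This is what lets the single parameter $\beta_k$ drive convergence at the accelerated rate, in contrast with Theorem~\ref{th:convergence_A1}, where both $\gamma_k$ and $\beta_k$ must shrink. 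I would then pin down that rate: squaring the recursion in \eqref{eq:update_tau_beta7} gives the identity $t_{k+1}(t_{k+1}-1)=t_k^2$, whence $\tfrac{t_k-1}{t_k}=\tfrac{t_{k-1}^2}{t_k^2}$ and therefore $t_k^2\beta_{k+1}=t_{k-1}^2\beta_k$. The quantity $t_k^2\beta_{k+1}$ is thus invariant and equals $t_0^2\beta_1=\gamma_0^{-1}$, so $\beta_{k+1}=\tfrac{1}{\gamma_0 t_k^2}$; combined with the standard induction $t_k\geq\tfrac{k+2}{2}$ this yields $\beta_k=\mathcal{O}\!\big(\tfrac{1}{\gamma_0 k^2}\big)$, with the explicit $\tfrac{4}{\gamma_0(k+2)^2}$ after matching indices.

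Finally I would assemble the three estimates from Lemma~\ref{le:excessive_gap_aug_Lag_func}. Because $S_{\beta_k}(\xbar^k)\leq 0$, the quantity under the square root in the feasibility bound of \eqref{eq:main_bound_gap_3} is at most $\norm{\yopt-\dot{\yb}}_{\Yc}^2$, so the bracket collapses and $\norm{\Ab\xbar^k-\cb}_{\Yc,\ast}\leq 2\beta_k L_{b_{\Yc}}\norm{\yopt-\dot{\yb}}_{\Yc}$; substituting the rate of $\beta_k$ gives the feasibility estimate. The objective lower bound is exactly \eqref{eq:lower_bound} combined with this feasibility estimate, while the upper bound comes from the first line of \eqref{eq:main_bound_gap_3} using $S_{\beta_k}(\xbar^k)\leq 0$, the Cauchy--Schwarz bound $-\iprods{\yopt,\Ab\xbar^k-\cb}\leq\norm{\yopt}_{\Yc}\norm{\Ab\xbar^k-\cb}_{\Yc,\ast}$, and again the feasibility estimate together with the rate of $\beta_k$. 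Demanding that the dominant $\mathcal{O}\!\big(b_{\Yc}(\yopt;\dot{\yb})/(\gamma_0 k^2)\big)$ term be below $\varepsilon$ then yields the $\mathcal{O}\!\big(\sqrt{b_{\Yc}(\yopt;\dot{\yb})/(\gamma_0\varepsilon)}\big)$ iteration-complexity.

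The main obstacle is conceptual rather than computational: one must recognize that the choice $\dot{\xb}=\xopt$ is legitimate even though $\xopt$ is unknown — the algorithm only ever needs $b_{\Xc}(\xb,\dot{\xb})=\tfrac{1}{2}\norm{\Ab\xb-\cb}_{\Yc,\ast}^2$, which is computable through $\Ab\xopt=\cb$ — and that this choice annihilates the $\gamma_0 b_{\Xc}(\xopt,\dot{\xb})$ term, which is precisely what converts the $\mathcal{O}(1/k)$ guarantee of Theorem~\ref{th:convergence_A1} into the accelerated $\mathcal{O}(1/k^2)$ rate. The only genuinely technical point is verifying $\bar{L}_{\Ab}=1$ under $\Sb=\Ab$, so that the frozen-$\gamma$ parameter updates satisfy \eqref{eq:pd_condition2}.
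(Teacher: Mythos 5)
Your proposal is correct and follows essentially the same route as the paper's own proof: specialize Lemma~\ref{le:maintain_excessive_gap2} to the augmented Lagrangian smoother (where $\bar{L}_{\Ab}=1$ and $b_{\Xc}(\xopt,\dot{\xb})=0$, so $S_{\beta_k}(\xbar^k)\leq G_{\gamma_0\beta_k}(\wbar^k;\dot{\wb})\leq 0$), establish $\beta_{k+1}=\tau_k^2/\gamma_0=\mathcal{O}\big(1/(\gamma_0 k^2)\big)$ from the $t_k$-recursion, and feed these into Lemma~\ref{le:excessive_gap_aug_Lag_func} to get both bounds and the complexity estimate. The only cosmetic difference is that you verify the update rule \eqref{eq:update_tau_beta7} satisfies the conditions \eqref{eq:pd_condition2} via the invariant $t_k^2\beta_{k+1}=\gamma_0^{-1}$, whereas the paper derives the same rule by imposing equality in those conditions; the content is identical.
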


The estimate \eqref{eq:convergence_A1} guides us to choose a large value for $\gamma_0$ such that we obtain better convergence bounds.
However, if $\gamma_0$ is too large, then the complexity of solving the subproblem \eqref{eq:augLag_cvx_subprob} increases commensurately.
In practice, $\gamma_0$ is often updated using a heuristic strategy \cite{Bertsekas1996d,Boyd2011}.
In general settings, since the solution $\xhat^{\ast}_{k+1}$ computed by \eqref{eq:augLag_cvx_subprob} requires to solve a generic convex problem, it no longer has a closed form expression.

%% 5.4. Strong convexity assumption.
\subsection{The strongly convex objective case}
If the objective function $f$ of \eqref{eq:primal_cvx} is strongly convex with the convexity parameter $\mu_f > 0$, then it is well-known~\cite{Nesterov2005c} that its conjugate $f^{\ast}$ is smooth,
and its gradient $\nabla{f}^{\ast}(\cdot) := \xb^{\ast}(\cdot)$ is Lipschitz continuous with the Lipschitz constant $L_{f^{\ast}} := \mu_f^{-1}$, where $\xb^{\ast}(\cdot)$ is given by
\begin{equation}\label{eq:primal_cvx_subprob}
\xb^{\ast}(\ub) := \mathrm{arg}\max_{\xb\in\Xc}\set{\iprods{\ub, \xb} - f(\xb)}.  
\vspace{-1ex}
\end{equation}
In addition, if $f^{\ast}_{\Ab}(\cdot) := f^{\ast}(-\Ab^{\top}(\cdot))$, then $\nabla{f}_{\Ab}$ is Lipschitz continuous with $L_{f^{\ast}_{\Ab}} := \frac{\bar L_\Ab}{\mu_f} = \frac{\norm{\Ab}^2}{\mu_f}$.

%%%% The primal-dual update scheme.
\paragraph{The primal-dual update scheme}
In this subsection, we only illustrate the modification of \ref{eq:pd_scheme_2d} to solve the strongly convex primal problem \eqref{eq:primal_cvx} as
\vspace{-0.75ex}
\begin{equation}\label{eq:pd_scheme_2d_strong}
\left\{\begin{array}{ll}
\hat{\yb}^k          &  := (1 -  \tau_k)\bar{\yb}^k +  \tau_k\yb^{\ast}_{\beta_k}(\Ab\bar{\xb}^k;\dot{\yb})\vspace{0.75ex}\\
\bar{\xb}^{k + 1}   &  := (1 -  \tau_k)\bar{\xb}^k + \tau_k\xb^{\ast}(-\Ab^{\top}\hat{\yb}^k) \vspace{0.75ex}\\
\ybar^{k  + 1} & := \prox_{L_{f^{\ast}_{\Ab}}^{-1}g^{\ast}}\left( \yhat^k + L_{f^{\ast}_{\Ab}}^{-1}\Ab\xb^{\ast}(-\Ab^{\top}\hat{\yb}^k)\right).
\end{array}\right.
\tag{\textrm{ADSGARD$_\mu$}} 
\end{equation}
We note that we no longer have the dual smoothness parameter $\gamma_k$, which is fixed to $\mu_f > 0$.
Hence, the conditions \eqref{eq:pd_condition2} of Lemma \ref{le:maintain_excessive_gap2} reduce to $\beta_{k+1} \geq (1-\tau_k)\beta_k$ and $(1-\tau_k)\beta_k \geq L_{f^{\ast}_{\Ab}}\tau_k^2$.
From these conditions we can derive the update rule for $\tau_k$ and $\beta_k$ as in Algorithm \ref{alg:aug_Lag_alg}, which is
\begin{equation}\label{eq:update_tau_beta7_muf}
t_{k+1} := \frac{1}{2}\Big(1 + \sqrt{1 + 4t_k^2}\Big),~~~~\beta_{k+1} := \frac{(t_k -1)}{t_k}\beta_k~~~\text{and}~~~ \tau_k := t_k^{-1}.
\end{equation}
Here, we fix $\beta_1 := L_{f^{\ast}_{\Ab}} = \frac{\norm{\Ab}^2}{\mu_f}$ and choose $t_0 := 1$.

%%% Convergence guarantee.
\paragraph{Convergence guarantee}
The following proposition shows the convergence of \ref{eq:pd_scheme_2d_strong}, whose proof is in Appendix \ref{apdx:co:strong_convex_convergence}.

%%% Proposition 8.
\begin{proposition}\label{co:strong_convex_convergence}
Suppose that the objective $f$ of the constrained convex problem \eqref{eq:constr_cvx} is strongly convex with the convexity parameter $\mu_f > 0$.
Let $\set{\wbar^k}$ be generated by \ref{eq:pd_scheme_2d_strong}  using the update rule \eqref{eq:update_tau_beta7_muf}. 
Then, the following guarantees hold:
\begin{equation}\label{eq:strong_cvx_main_estimate}
\left\{\begin{array}{rcl}
 &-\frac{8 L_{b_\Yc} \bar L_\Ab \norm{\yopt}_\Yc \norm{\yopt - \dot{\yb}}_{\Yc}}{\mu_f (k+2)^2}  \leq f(\xbar^k) \!-f^{\star} \leq \frac{8 L_{b_\Yc} \bar L_\Ab \norm{\yopt}_\Yc \norm{\yopt - \dot{\yb}}_{\Yc}+ 4b_{\Yc}(\yopt; \dot{\yb})}{\mu_f (k+2)^2},\vspace{1ex} \\
 &\Vert\Ab\xbar^k \!-\! \cb\Vert_{\Yc,\ast} \leq \frac{ 8 L_{b_\Yc} \bar{L}_\Ab \norm{\yopt - \dot{\yb}}_{\Yc}}{\mu_f(k+2)^2}. 
\end{array}\right.
\end{equation}
\end{proposition}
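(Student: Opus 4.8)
The plan is to recognize \eqref{eq:pd_scheme_2d_strong} as the specialization of the \ref{eq:pd_scheme_2d} scheme in which the dual-smoothing regularizer $\gamma_{k+1}b_{\Xc}$ is dropped and its role is taken over by the \emph{intrinsic} smoothness of the dual. Indeed, since $f$ is $\mu_f$-strongly convex, $f^{\ast}_{\Ab}(\cdot) = f^{\ast}(-\Ab^{\top}\cdot)$ has an $L_{f^{\ast}_{\Ab}} = \bar{L}_{\Ab}/\mu_f$-Lipschitz gradient, and the step $\xb^{\ast}(-\Ab^{\top}\hat{\yb}^k)$ is (up to sign) precisely the associated gradient map $\Ab\xb^{\ast}(-\Ab^{\top}\hat{\yb}^k)$. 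Consequently I would work with the one-sided smoothed gap $G_{\beta}(\wbar^k) := P_{\beta}(\xbar^k;\dot{\yb}) - D(\ybar^k)$ (the dual objective is now used unsmoothed, i.e.\ $\gamma = 0$) and reprove the gap-reduction estimate of Lemma~\ref{le:maintain_excessive_gap2} in this setting.

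The first and hardest step is therefore to establish $G_{\beta_{k+1}}(\wbar^{k+1}) \leq (1-\tau_k)G_{\beta_k}(\wbar^k) \leq 0$ for the iterates of \eqref{eq:pd_scheme_2d_strong}. I would mirror the proof of Lemma~\ref{le:maintain_excessive_gap2}, but the descent on the dual objective must now be driven by the descent inequality attached to the $L_{f^{\ast}_{\Ab}}$-Lipschitz gradient of $f^{\ast}_{\Ab}$, rather than by the added $\gamma_{k+1}b_{\Xc}$ term. Tracking the convex-combination updates of $\xbar^{k+1}$ and $\hat{\yb}^k$ together with the prox-gradient step for $\ybar^{k+1}$, the three requirements in \eqref{eq:pd_condition2} collapse to $\beta_{k+1} \geq (1-\tau_k)\beta_k$ and $(1-\tau_k)\beta_k \geq L_{f^{\ast}_{\Ab}}\tau_k^2$, while the initialization $\beta_1 = L_{f^{\ast}_{\Ab}}$ with $\tau_0 = 1$ yields $G_{\beta_1}(\wbar^1) \leq 0$ exactly as in the excessive-gap initialization. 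I expect the delicate point to be verifying that the intrinsic-smoothness descent step produces precisely the constant $L_{f^{\ast}_{\Ab}} = \bar{L}_{\Ab}/\mu_f$ in the third condition, with no spurious factor.

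Next I would check that the rule \eqref{eq:update_tau_beta7_muf} enforces these two conditions with equality and extract the rate of $\beta_k$. Writing $\tau_k = t_k^{-1}$ and using $t_{k+1}(t_{k+1}-1) = t_k^2$ (which follows from $t_{k+1} = \tfrac12(1+\sqrt{1+4t_k^2})$), the ratio telescopes as $\frac{t_k-1}{t_k} = \frac{t_{k-1}^2}{t_k^2}$, so that $\beta_{k+1} = \beta_1 t_0^2/t_k^2 = \beta_1/t_k^2$. Combined with the standard bound $t_k \geq (k+2)/2$ and $\beta_1 = \bar{L}_{\Ab}/\mu_f$, this gives $\beta_{k+1} \leq 4\bar{L}_{\Ab}/(\mu_f(k+2)^2)$, i.e.\ the $\mathcal{O}(1/(k+2)^2)$ rate appearing in \eqref{eq:strong_cvx_main_estimate}.

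Finally I would convert the gap bound into the claimed estimates through Lemma~\ref{le:excessive_gap_aug_Lag_func}. Since $\gamma = 0$, the first inequality in \eqref{eq:S_bound} specializes to $S_{\beta_k}(\xbar^k) \leq G_{\beta_k}(\wbar^k) \leq 0$. Feeding $S_{\beta_k}(\xbar^k) \leq 0$ into the feasibility estimate of \eqref{eq:main_bound_gap_3} makes the quantity under the square root at most $\norm{\yopt-\dot{\yb}}_{\Yc}^2$, leaving $\norm{\Ab\xbar^k - \cb}_{\Yc,\ast} \leq 2\beta_k L_{b_{\Yc}}\norm{\yopt-\dot{\yb}}_{\Yc}$; inserting the $\beta_k$ rate yields the feasibility bound. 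The objective upper bound then follows from the first line of \eqref{eq:main_bound_gap_3} by dropping $S_{\beta_k}(\xbar^k)\le 0$, bounding $-\iprods{\yopt,\Ab\xbar^k-\cb}$ via Cauchy--Schwarz, and combining the feasibility bound with the term $\beta_k b_{\Yc}(\yopt;\dot{\yb})$, whereas the lower bound is the generic estimate \eqref{eq:lower_bound} paired with the same feasibility bound. Collecting the constants reproduces \eqref{eq:strong_cvx_main_estimate}.
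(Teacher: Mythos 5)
Your proposal is correct and takes essentially the same route as the paper's proof: specialize the gap-reduction argument of Lemma~\ref{le:maintain_excessive_gap2} to $\gamma = 0$, replacing the coefficient $\bar{L}_{\Ab}/\gamma_{k+1}$ by $L_{f^{\ast}_{\Ab}} = \bar{L}_{\Ab}/\mu_f$ so that the conditions \eqref{eq:pd_condition2} collapse to $\beta_{k+1} \geq (1-\tau_k)\beta_k$ and $(1-\tau_k)\beta_k \geq L_{f^{\ast}_{\Ab}}\tau_k^2$, then derive the $\mathcal{O}\big(\bar{L}_{\Ab}/(\mu_f k^2)\big)$ decay of $\beta_k$ from the rule \eqref{eq:update_tau_beta7_muf} and conclude with Lemma~\ref{le:excessive_gap_aug_Lag_func}. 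Your write-up merely makes explicit what the paper leaves implicit (in particular the telescoping identity $\beta_{k+1} = \beta_1/t_k^2$ that substantiates the paper's inductive claim on the decay of $\beta_k$).
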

This result shows that \ref{eq:pd_scheme_2d_strong} has an $\mathcal{O}(1/k^2)$ convergence rate with respect to the objective residual and the feasibility gap.
We note that in both Propositions~\ref{th:convergence_A2} and \ref{co:strong_convex_convergence}, the bounds only depend on the quantities in the dual space and $\bar{L}_{\Ab}$.

%% Extend to the inequality constraints and nonlinear constraints.
\subsection{Extension to general cone constraints}\label{sec:extensions}
The theory presented in the previous sections can be extended to solve the following general constrained convex optimization problem:
\begin{equation}\label{eq:ineq_cvx_prob}
f^{\star} := \displaystyle\min_{\xb\in\Xc} \set{ f(\xb) \mid \Ab\xb - \cb \in \Kc },
\end{equation}
where $f$, $\Ab$ and $\cb$ are defined as in \eqref{eq:constr_cvx}, and $\Kc$ is a nonempty, closed and convex set in $\R^m$.

If $\Kc$ is  a nonempty, closed and convex set, then a simple way to process \eqref{eq:ineq_cvx_prob} is using a slack variable $\rb \in\Kc$ such that $\rb := \Ab\xb - \cb$ and $\zb := (\xb, \rb)$ as a new variable. 
Then, we can transform \eqref{eq:ineq_cvx_prob} into \eqref{eq:constr_cvx} with respect to the new variable $\zb$.
The primal subproblem corresponding to $\rb$ is defined as $\min\set{\iprods{-\yb, \rb} \mid \rb\in\Kc}$, which is equivalent to the support function $s_{\Kc}(\yb) := \displaystyle\sup\set{\iprods{\yb, \rb} \mid \rb\in\Kc }$ of $\Kc$. 
Consequently, the dual function becomes $\tilde{g}(\yb) := g(\yb) - s_{\Kc}(\yb)$, where $g(\yb) := \min\set{ f(\xb) + \iprods{\Ab\xb - \cb, \yb} \mid \xb\in\Xc}$.
Now, we can apply the algorithms presented in the previous sections to obtain an approximate solution $\zbar^k := (\xbar^k, \bar{\rb}^k)$ with a convergence guarantee on $f(\xbar^k) - \fopt$,  $\norm{\Ab\xbar^k - \bar{\rb}^k - \cb}_{\Yc,\ast}$, $\xbar^k\in\Xc$ and $\bar{\rb}^k \in\Kc$ as in Theorem~\ref{th:convergence_accPD_alg_const}, or Theorem~\ref{th:convergence_A1_const}.

If $\Kc$ is a cone (e.g., $\Kc := \R^m_{+}$, $\Kc := \mathcal{L}^m_{+}$ is a second order cone, or $\Kc := \mathcal{S}^m_{+}$ is a semidefinite cone), then with the choice $p_{\Yc}(\cdot) := (1/2)\norm{\cdot}_{\Yc}^2$, we can substitute the smoothed function $g_{\beta}$ in \eqref{eq:g_beta} to obtain the following one
\begin{equation}\label{eq:f_beta1}
\hat{g}_{\beta}(\Ab\xb,\dot{\yb}) := \max\set{ \iprods{\Ab\xb - \cb, \yb} - (\beta/2)\norm{\yb - \dot{\yb}}_{\Yc}^2 \mid \yb\in-\Kc^{\ast} },
\end{equation}
where $\Kc^{\ast}$ is the dual cone of $\Kc$, which is defined as $\Kc^{\ast} := \set{\zb \mid \iprods{\zb, \xb} \geq 0, ~\xb\in\Kc}$.
With this definition, we use the smoothed gap function $\hat{G}_{\gamma\beta}$ as $\hat{G}_{\gamma\beta}(\wb; \dot{\wb}) := \hat{P}_{\beta}(\xb;\dot{\yb}) - D_{\gamma}(\yb;\dot{\xb})$, where $D_{\gamma}(\yb;\dot{\xb}) := \min\set{ f(\xb) + \iprods{\Ab\xb - \cb, \yb} + \gamma b_{\Xc}(\xb,\dot{\xb}) \mid \xb\in\Xc}$ is the smoothed dual function defined as before, and $\hat{P}_{\beta}(\xb;\dot{\yb}) := f(\xb) + \hat{g}_{\beta}(\Ab\xb,\dot{\yb})$.

In principle, we can apply one of the two previous schemes to solve \eqref{eq:ineq_cvx_prob}. 
Let us demonstrate the \ref{eq:pd_scheme_2d} for this case.
Since $\Kc$ is a cone, we remain using the original scheme \eqref{eq:pd_scheme_2d} with the following changes:
\vspace{-0.75ex}
\begin{equation*}
\left\{\begin{array}{ll}
\yb_{\beta_k}^{*}(\Ab\bar{\xb}^k;\dot{\yb}) &:=  \mathrm{proj}_{-\Kc^{\ast}}\left(\dot{\yb} + \beta_k^{-1}(\Ab\xbar^k - \cb)\right),\vspace{1ex}\\
\ybar^{k+1} &:=  \mathrm{proj}_{-\Kc^{\ast}}\left(\yhat^k + \frac{\gamma_{k+1}}{\bar{L}_A}\left(\Ab\xb^{\ast}_{\gamma_{k+1}}(\yhat^k) - \cb\right)\right),
\end{array}\right.
\vspace{-0.75ex}
\end{equation*}
where $\mathrm{proj}_{-\Kc^{\ast}}$ is the projection onto the cone $-\Kc^{\ast}$.
In this case, we still have the convergence guarantee as in Theorem~\ref{th:convergence_A1} for the objective residual $f(\xbar^k) - \fopt$ and the primal feasibility gap $\dist{\Ab\xbar^k - \cb, \Kc}$, the Euclidean distance from $A\bar{x}^k - c$ to $\Kc$.
We note that if $\Kc$ is a self-dual conic cone, then $\Kc^{*} = \Kc$. 
Hence, $\yb_{\beta_k}^{*}(\Ab\bar{\xb}^k; \dot{\yb})$ and $\bar{\yb}^{k+1}$ can be either  efficiently computed or a closed form.

%%% Restarting techniques.
\subsection{Restarting techniques}
Similar to other accelerated gradient algorithms in \cite{Giselsson2014,Odonoghue2012,Su2014}, restarting ASGARD and ADSGARD may lead to
a better performance in practice. 
We discuss in this subsection how to restart these two algorithms using a fixed iteration restarting strategy \cite{Odonoghue2012}.

If we consider \ref{eq:ac_pd_scheme}, then, when a restart takes place,  we perform the following steps:
\begin{equation}\label{eq:asgard-restart}
\left\{\begin{array}{llll}
\tilde{\xb}^{k+1}& \leftarrow &&\bar{\xb}^{k+1}, \\
\dot{\yb} &\leftarrow && \yb^{\ast}_{\beta_{k+1}}(\Ab \bar{\xb}^{k+1}; \dot{\yb}),\\
\beta_{k+1}& \leftarrow &&\beta_1, \\
\tau_{k+1} &\leftarrow &&1.
\end{array}\right.
\end{equation}
Restarting the primal variable at $\bar{\xb}^{k+1}$ is classical, see, e.g.,~\cite{Odonoghue2012}.
For the dual center point $\dot{\yb}$, we suggest to restart it at the last dual variable computed. 
Indeed, by~\eqref{eq:S_bound}, we know that the distance between $\yb^{*}_{\beta_{k+1}}(\Ab\bar{\xb}^{k+1}; \dot{\yb})$ and the optimal solution $\yopt$ will remain bounded.
Hence, in the favorable cases, we will benefit from a smaller distance between the new center point and $\yopt$, while in the unfavorable cases, restarting should not affect too much the convergence.
In practice, however, we observe that $\yb^{*}_{\beta_{k+1}}(\Ab\bar{\xb}^{k+1}; \dot{\yb})$ converges to the dual solution $\yopt$.
We note that the restarting strategy \eqref{eq:asgard-restart} does not increase the per-iteration complexity of the algorithm.

For \ref{eq:pd_scheme_2d}, we suggest to restart it using the following steps:
\begin{equation}\label{eq:adsgard-restart}
\left\{\begin{array}{llll}
\hat{\yb}^{k+1}& \leftarrow &&\bar{\yb}^{k+1}, \\
\dot{\yb} &\leftarrow && \bar{\yb}^{k+1},\\
\dot{\xb} &\leftarrow && \xb^{\ast}_{\gamma_{k+1}}(\hat{\yb}^k; \dot{\xb}),\\
\beta_{k+1}& \leftarrow &&\beta_1, \\
\gamma_{k+1} & \leftarrow &&\gamma_1, \\
\tau_{k+1} &\leftarrow &&1.
\end{array}\right.
\end{equation}
Understanding the actual consequences of the restart procedure as well as designing other conditions for restarting are still  open questions, even for the unconstrained case. 
Yet, we observe  that it often significantly improves the convergence speed in practice. 

%%%% 6. Numerical experiments.
\section{Numerical experiments}\label{sec:num_experiments}
In this section, we provide some key examples to illustrate the advantages of our new algorithms compared to existing state-of-the-arts.
While other numerical experiments can be found in our technical reports \cite{Tran-Dinh2014a}, we instead focus some extreme cases where existing methods may encounter arbitrarily slow convergence rate due to lack of theory, while our methods exhibits an $\mathcal{O}(1/k)$ rate as predicted by the theory.
We then compare our methods with \cite{Nesterov2005c} and provide one application to illustrate the advantages of the proposed algorithms.

%%%% 6.1. A representative linear program
\subsection{A degenerate linear program}
We aim at comparing different algorithms to solve the following simple linear program:
\begin{equation}\label{eq:lp_exam}
\left\{\begin{array}{ll}
\displaystyle\min_{\xb \in \R^n}& 2 \xb_n     \\
\textrm{s.t.} & \textstyle \sum_{k=1}^{n-1} \xb_k  = 1, \\
              &  \xb_n - \textstyle \sum_{k=1}^{n-1} \xb_k = 0 \quad (2 \leq j \leq d), \\
              &  \xb_n \geq 0.
\end{array}\right.
\end{equation}
The second inequality is repeated $d-1$ times, which makes the problem degenerate.
Yet, qualification conditions hold since this is a feasible and bounded linear program.
This fits into our framework with $f(\xb) := 2 \xb_n + \delta_{\set{\xb_n\geq0}}(\xb_n)$,  $\Ab\xb := [\sum_{k=1}^{n-1} \xb_k; \xb_n - \sum_{k=1}^{n-1} \xb_k; \cdots; \xb_n - \sum_{k=1}^{n-1} \xb_k]$,
$\cb := (1, 0, \cdots, 0)^{\top} \in \R^d$ and  $g(\cdot) := \delta_{\set{\cb}}(\cdot)$.
A primal and dual solution can be found explicitly and by playing with the sizes $n$ and $d$ of the problem, one can control the degree of degeneracy.

In this test, we choose $n = 10$ and $d = 200$. 
We implement both \ref{eq:ac_pd_scheme} and \ref{eq:pd_scheme_2d} and their restart variants.
In Figure~\ref{fig:lp-example}, we compare our methods against the Chambolle-Pock method~\cite{Chambolle2011}. 
We can see that the Chambolle-Pock method struggles with the degeneracy while \ref{eq:ac_pd_scheme} still exhibits an $\mathcal{O}(1/k)$ sublinear convergence rate as predicted by our theory.

In Figure~\ref{fig:lp-example2}, we compare methods requiring the resolution of a  nontrivial optimization subproblem at each iteration.
In this case,  the inversion of a rank deficient linear system, we thus compare \ref{eq:1p2d_augLM} with and without restart against ADMM~\cite{Boyd2011}. 
For ADMM, we selected the step-size parameter by sweeping from small values to large values and choosing the one that gives us the fastest performance.
Again, our algorithm resists to the degeneracy and restarting strategies improves  the performance, while ADMM has very slow convergence rate.

\begin{figure}[ht!]
%\vspace{-1ex}
\includegraphics[width=0.49\linewidth]{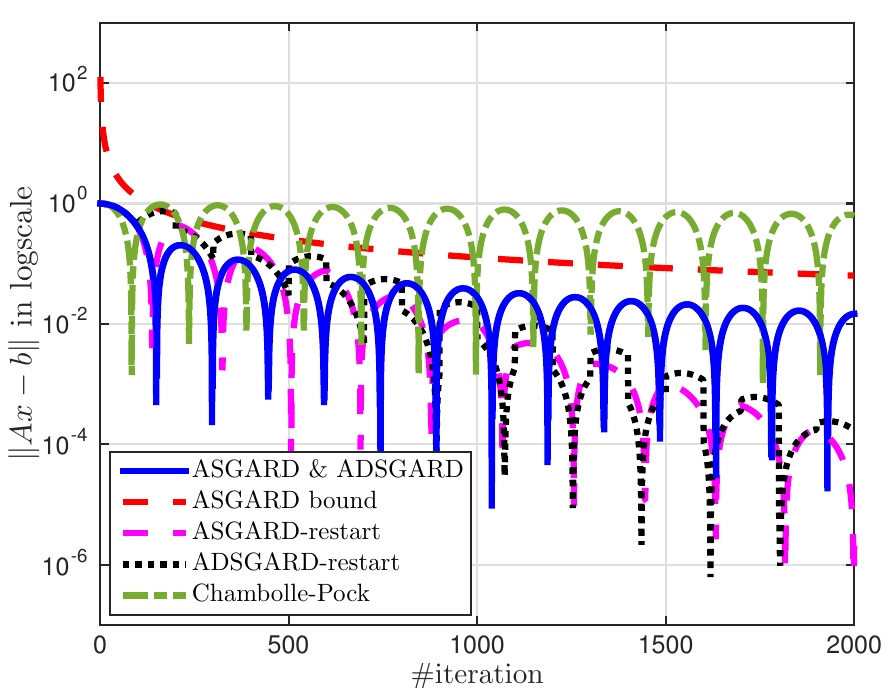}
\includegraphics[width=0.49\linewidth]{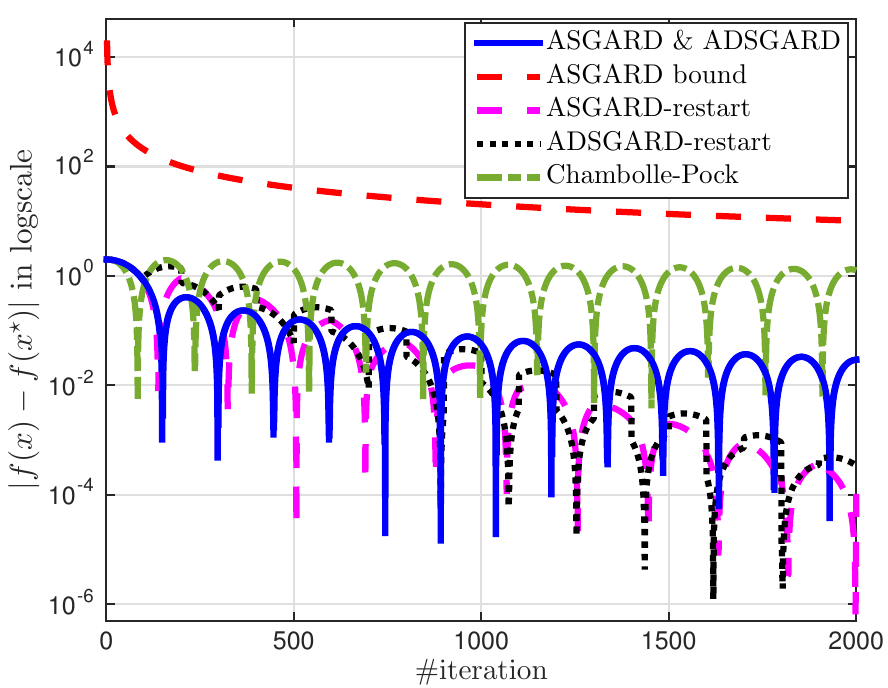}
\vspace{-1ex}
\caption{Comparison of the absolute feasibility violation $($left$)$ and the absolute objective residual $($right$)$ for \ref{eq:ac_pd_scheme} $($solid blue line$)$, 
\ref{eq:ac_pd_scheme} with a restart every 100 iterations using \eqref{eq:asgard-restart} $($dashed pink line$)$, 
\ref{eq:pd_scheme_2d} with a restart every 100 iterations using \eqref{eq:adsgard-restart} $($black dotted line$)$, and Chambolle-Pock $($green dash-dotted line$)$.
The dashed red line is the theoretical bound of \ref{eq:ac_pd_scheme} $($Theorem~\ref{th:convergence_accPD_alg}$)$.
 \ref{eq:pd_scheme_2d} leads to similar results as \ref{eq:ac_pd_scheme} on this linear program~\eqref{eq:lp_exam}: the difference is not perceptible on the figure.}
\label{fig:lp-example}
\end{figure}

\begin{figure}[ht!]
\vspace{-1ex}
\includegraphics[width=0.49\linewidth]{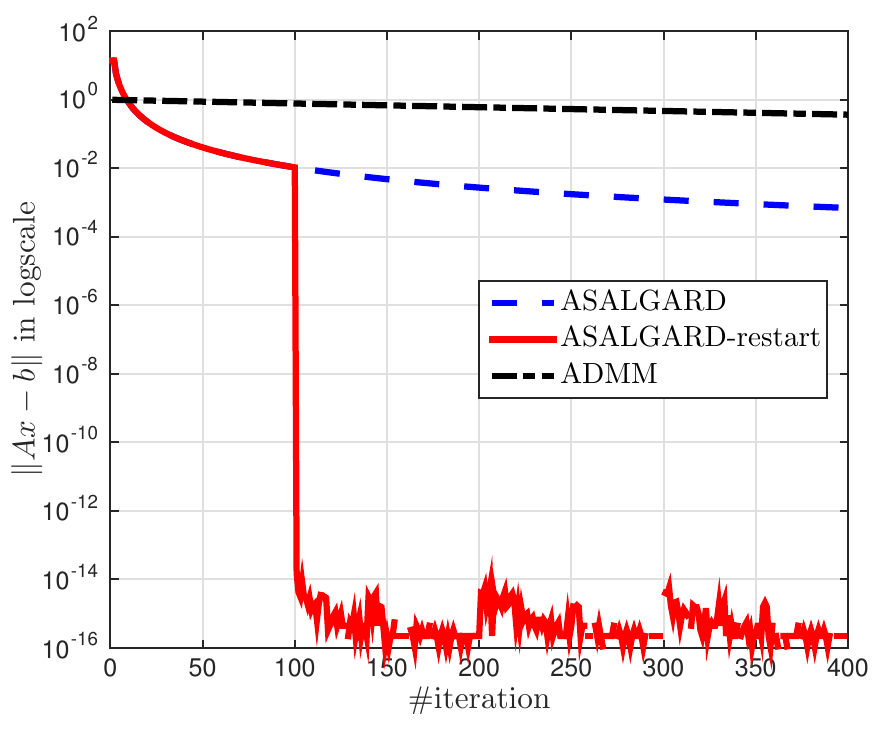}
\includegraphics[width=0.49\linewidth]{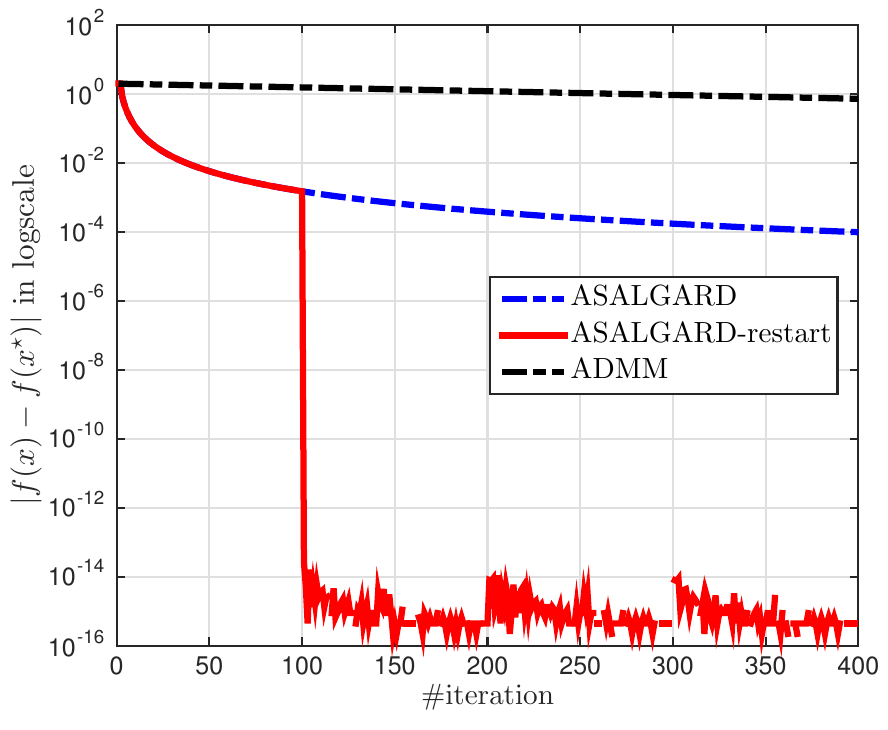}
\vspace{-1ex}
\caption{Comparison of the absolute feasibility violation $($left$)$ and the absolute objective residual $($right$)$ for \ref{eq:1p2d_augLM} $($solid blue line$)$, \ref{eq:1p2d_augLM} with a restart every 100 iterations $($dashed red line$)$, and  ADMM $($black dotted line$)$.}\label{fig:lp-example2}
\vspace{-2ex}
\end{figure}

%%%% 6.2. Distance minimization of two convex sets.
\subsection{Generalized convex feasibility problem}
Given $N$ nonempty, closed and convex sets $\Xc_i \subseteq \R^n$ for $i=1,\cdots, N$, we consider the following optimization problem:
\begin{equation}\label{eq:min_Xc}
\min_{\xb := (\xb_1^{\top}, \cdots, \xb_N^{\top})^{\top} \in\R^{Nn}} \set{ f(\xb) := \sum_{i=1}^Ns_{\Xc_i}(\xb_i)  \mid \sum_{i=1}^N\Ab^{\top}_i\xb_i = \boldsymbol{0}^m },
\end{equation}
where $s_{\Xc_i}$ is the support function of $\Xc_i$, and $\Ab_i\in\R^{n\times m}$ is given   for $i=1,\cdots, N$.

It is trivial to show that the dual problem of \eqref{eq:min_Xc} is the following generalization of  a convex feasibility problem:
\begin{equation}\label{eq:feasibility_cvx}
\text{Find}~\yb^{\star}\in\R^m~\text{such that:}~\Ab_i\yb^{\star} \in \Xc_i ~(i=1,\cdots, N).
\end{equation}
Clearly, when $\Ab_i = \Id$ the identity matrix, \eqref{eq:feasibility_cvx} becomes the classical  convex feasibility problem.
When $\Ab_i = \Id$ for some $i\in\set{1,\cdots, N}$ and $\Ab_i = \Ab$, otherwise, \eqref{eq:feasibility_cvx} becomes a multiple-set split feasibility problem considered in the literature.
Assume that \eqref{eq:feasibility_cvx} has a solution and $N\geq 2$. 
Hence, \eqref{eq:min_Xc} and \eqref{eq:feasibility_cvx} satisfy Assumption~A.\ref{as:A1}.

Our aim is to apply Algorithm~\ref{alg:pd_alg1} and Algorithm~\ref{alg:pd_alg2} to solve the primal problem \eqref{eq:min_Xc}, and compare them with the most state-of-the-art  ADMM algorithm with multiple blocks \cite{deng2013parallel}.
Clearly, with nonorthogonal $\Ab_i$, the primal subproblem of computing $\xb_i$ in the parallel-ADMM scheme \cite{deng2013parallel} does not have a closed form solution, we need to solve it iteratively up to a given accuracy. 
In addition, by a change of variable, we can rescale the iterates such that ADMM does not depend on the penalty parameter when solving \eqref{eq:min_Xc}.
With the use of Euclidean distance for our smoother, Algorithm~\ref{alg:pd_alg1} and Algorithm~\ref{alg:pd_alg2} can solve the primal subproblem \eqref{eq:x_ast} in $\xb_i$ with a closed form solution, which only requires one projection onto $\Xc_i$.

The first experiment is for $N=2$. We choose  $\Xc_1 := \set{\yb\in\R^n \mid \epsilon \yb_1 - \sum_{j=2}^n\yb_j \leq 1}$ and $\Xc_2 := \set{\yb\in\R^n \mid \sum_{i=2}^n\yb_j \leq -1}$ to be two half-planes, where $\epsilon > 0$ is fixed. The constant $\epsilon$ represents the angle between these half-planes.
It is well-known \cite{Tran-Dinh2015} that the ADMM algorithm can be written equivalently to an alternating projection method on the dual space.
The convergence of this algorithm strongly depends on the angle between these sets. By varying $\epsilon$, we observe the convergence speed of ADMM is also varying, while our algorithms seem not to depend on $\epsilon$.
Figure \ref{fig:cfp_exam_10000} shows the convergence rate on the absolute feasibility gap $\Vert\sum_{i=1}^N\xb_i\Vert_2$ of three algorithms for $n = 10,000$. 
Since the objective value is always zero, we omit its plot here.

\begin{figure}[ht!]
\includegraphics[width=0.495\linewidth]{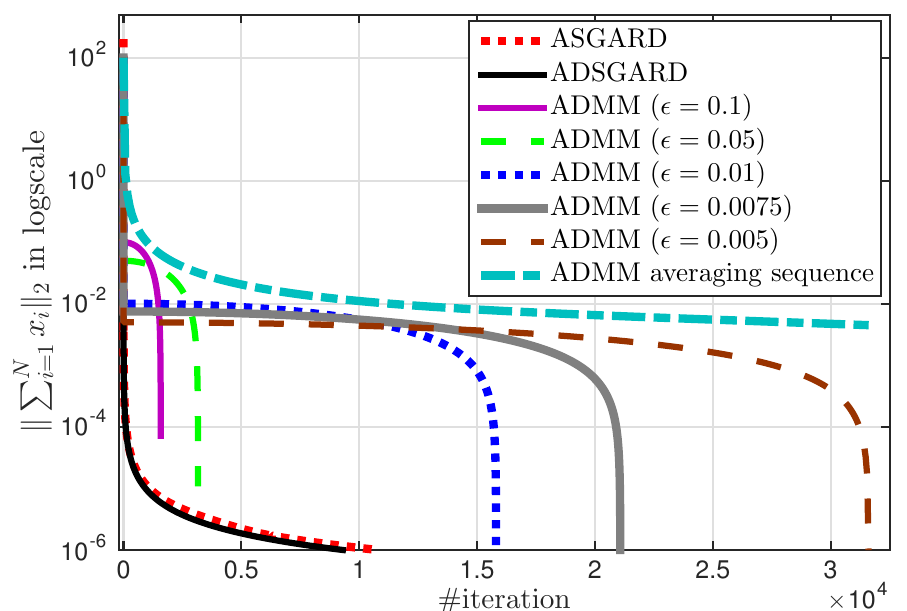}
\includegraphics[width=0.495\linewidth]{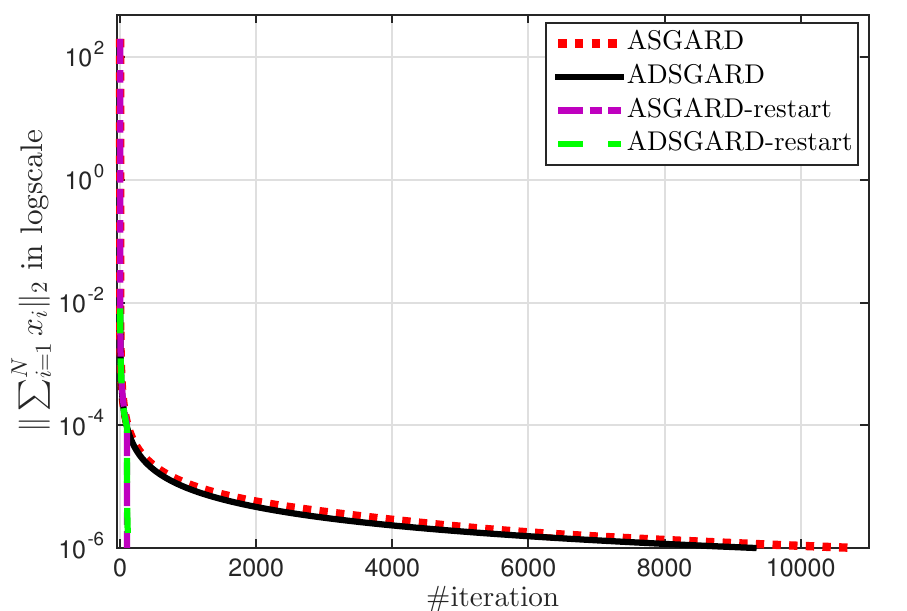}
\vspace{-1ex}
\caption{Comparison of Algorithm~\ref{alg:pd_alg1}, Algorithm~\ref{alg:pd_alg2} and ADMM with different values of $\epsilon$ $($left$)$. Comparison of Algorithm~\ref{alg:pd_alg1}, Algorithm~\ref{alg:pd_alg2} and their restart variant $($restarting after every 100 iterations$)$ $($right$)$. The number of variables is $20,000$.}\label{fig:cfp_exam_10000}
\vspace{-1ex}
\end{figure}
The theoretical version of Algorithm~\ref{alg:pd_alg1} and Algorithm~\ref{alg:pd_alg2}  exhibits a convergence rate slightly better than $\mathcal{O}(1/k)$  and  is independent of $\epsilon$, while ADMM can be arbitrarily  slow as $\epsilon$ decreases. 
ADMM very soon drops to a certain accuracy and then is saturated at that level before it converges. Algorithm~\ref{alg:pd_alg1} and Algorithm~\ref{alg:pd_alg2} also quickly converge to the $10^{-5}$ accuracy level and then make a slower progress to achieve the $10^{-6}$ accuracy, but still obeys our theoretical guarantee. 
We notice that the averaging sequence of ADMM converges at the $\mathcal{O}(1/k)$ rate but it remains far away from our theoretical rate in Algorithm~\ref{alg:pd_alg1} and Algorithm~\ref{alg:pd_alg2} due to a big constant factor.
If we combine these two algorithms with our restart strategy, both algorithms need $102$ iterations to reach the desired accuracy. 
We can see that Algorithm~\ref{alg:pd_alg1}  performs very similar to Algorithm~\ref{alg:pd_alg2}.
We can also observe that the performance of  our algorithms depends on $\bar{L}_{\Ab}$ and initial points, but it is relatively independent of the geometric structure of problems as opposed to the ADMM for solving the generalized convex feasibility problem \eqref{eq:min_Xc}.

Now, we extend to the cases of $N=3$ and $N=4$, where we add two more sets $\Xc_3$ and $\Xc_4$. 
We choose $\Xc_3 := \set{\yb\in\R^n \mid 0.5\epsilon \yb_1 - \sum_{j=2}^n\yb_j = 1}$ to be a hyperplane in $\R^n$, and $\Xc_4 := \set{\yb\in\R^n \mid -\yb_1 + \sum_{j=3}^n\yb_j \leq 1}$ to be a half-plane in $\R^n$. 
We test our algorithms and the multiblock-ADMM method in \cite{deng2013parallel} again. 
\begin{figure}[ht!]
\includegraphics[width=0.495\linewidth]{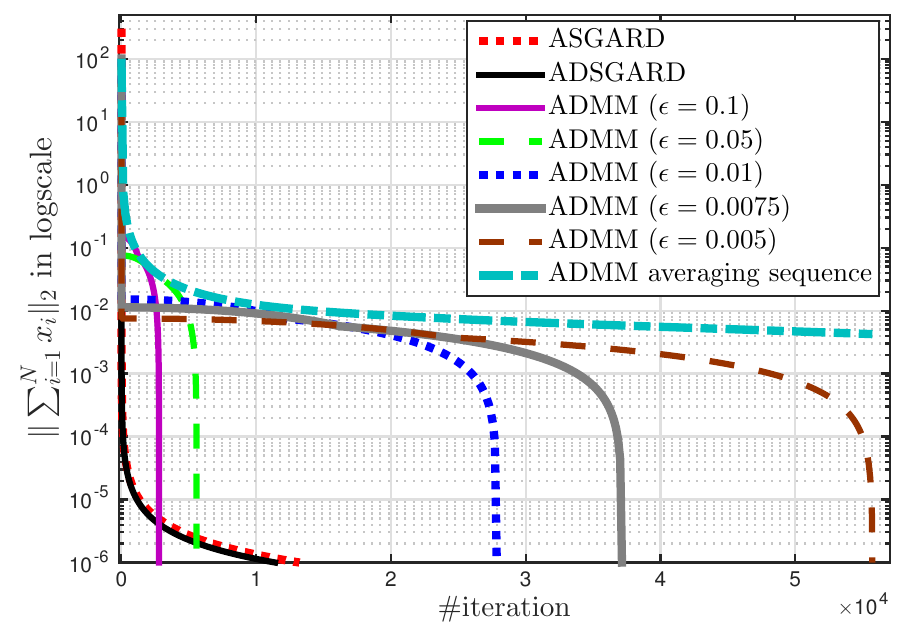}
\includegraphics[width=0.495\linewidth]{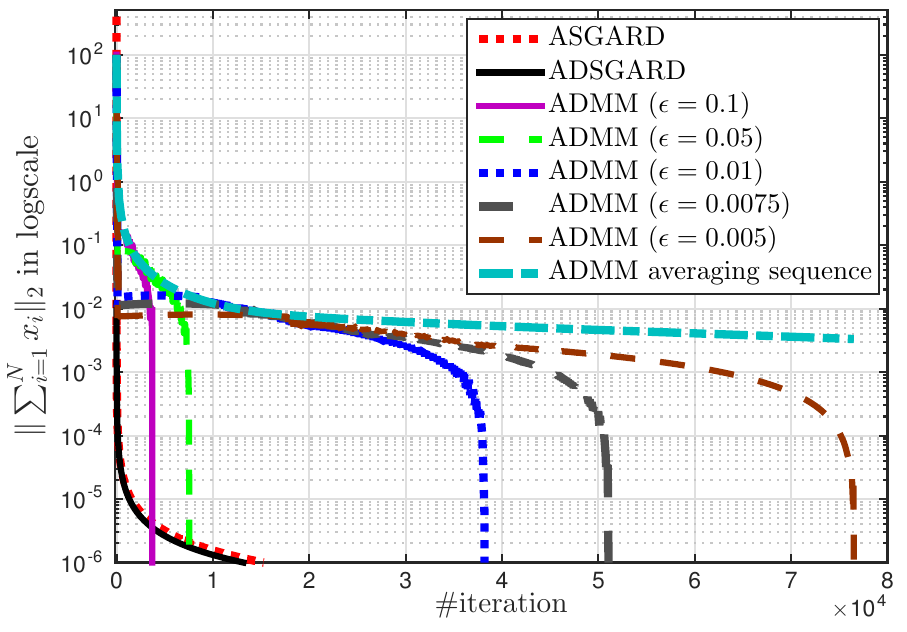}
\vspace{-2ex}
\caption{Comparison of Algorithm~\ref{alg:pd_alg1}, Algorithm~\ref{alg:pd_alg2} and ADMM with different values of $\epsilon$. The left-plot is for $N=3$ and the right one is for $N=4$.}\label{fig:cfp_exam2_10000}
\vspace{-3ex}
\end{figure}
The results are plotted in Figure~\ref{fig:cfp_exam2_10000} for the case $n = 10,000$.
In both cases, the ADMM algorithm still makes a slow progress as $\epsilon$ is decreasing and $N$ is increasing. 
 Algorithm~\ref{alg:pd_alg1} and Algorithm~\ref{alg:pd_alg2} seem to scale slightly to $N$, the number of blocks. 
We note that since $\Ab_i =\Id$ for $i=1,\cdots, N$. The per-iteration complexity of three algorithms in our experiment is essentially the same.

%%%% 6.3. Comparison to Nesterov's smoothing technique.
\subsection{A comparison with \cite{Nesterov2005c}}\label{subsec:compare_55}
To see the advantages of our homotopy strategy, we consider the following square-root LASSO problem considered in the literature, e.g., \cite{Belloni2011}:
\begin{equation}\label{eq:sqrt_lasso}
P^{\star} = \min_{\xb\in\R^n} \set{ P(\xb) := \tfrac{1}{\sqrt{m}}\Vert\Ab\xb - \bb\Vert_2 + \lambda\Vert\xb\Vert_1 },
\end{equation}
where $\Ab\in\R^{m\times n}$, $b\in\R^m$, and $\lambda > 0$ is a given regularization parameter.
As suggested in \cite{Belloni2011}, we can choose $\lambda := \frac{1.1}{\sqrt{m}}\Phi^{-1}(1-0.5\alpha/n)$, where $\alpha = 0.05$ and $\Phi$ is the standard normal distribution.
By letting $f(\xb) :=  \lambda\Vert\xb\Vert_1$, and $g(\ub) := \Vert\ub - \bb\Vert_2 = \max\set{ \iprods{\ub,\yb} - \iprods{\bb,\yb} \mid \Vert\yb\Vert_2 \leq 1}$, we can easily check that $f$ and $g$ satisfy Assumption~\ref{as:A1}. 
Clearly, the conjugate function $g^{\ast}(\yb) := \iprods{\bb,\yb} + \delta_{\mathcal{B}_2(0; 1)}(\yb)$, where $\delta_{\mathcal{B}_2(0; 1)}$ is the indicator function of the $\ell_2$-norm ball $\Bc_2(0; r) := \set{\yb\in\R^m \mid \Vert\yb\Vert_2 \leq r}$.

Since the solutions of \eqref{eq:sqrt_lasso} are sparse, we apply Algorithm~\ref{alg:pd_alg1} to solve this problem and compare it with Nesterov's method in \cite{Nesterov2005c}.
Let us choose $b_{\Xc}(\xb,\dot{\xb}) := \frac{1}{2}\Vert\xb -\dot{\xb}\Vert_2^2$ and $b_{\Yc}(\yb,\dot{\yb}) := \frac{1}{2}\Vert\yb-\dot{\yb}\Vert_2^2$ with $\dot \xb = 0$ and $\dot \yb = 0$.
In this case, $\yb^{\ast}_{\beta}(\xb;\dot{\yb})$ can be computed as the projection on the unit $\ell_2$-ball, while $\xb^{\ast}_{\gamma}(\yb;\dot{\xb})$ is computed from the proximal operator of the $\ell_1$-norm (a soft-thresholding operator). 
We initialize the algorithms at $x^0 = 0$.

Let us try to tune the smoothness parameter in both algorithms.
Nesterov suggested to tune it as follows. Given an iteration budget $K$ and an a priori bound on the distance to the solution, choose the smoothness parameter that minimizes the known theoretical bound.
In Figure~\ref{fig:sqrt_lasso} below, this corresponds to ``$\beta$ with guarantees''.
For this square-root LASSO problem,  we  take $K := 10^5$. 
Theoretically, we can show that $\norm{\xb^{\star} - \xb^0}_2 \leq \norm{\xb^{\star}}_1 \leq \frac{\norm{b}_2}{ \lambda \sqrt{m}}$.
We can also compute $D_{\Yc} := \frac{1}{2}$, where $\Yc := \set{\yb\in\R^m \mid \Vert\yb\Vert_2 \leq 1}$ is an $\ell_2$-unit ball.
The theoretical bound derived from \cite{Nesterov2005c} becomes $P(\xb^K) - P^{\star} \leq \frac{4 \norm{A} \norm{b}_2 \sqrt{D_\Yc}}{\lambda \sqrt{m} (K+1)}$.
Similarly, in our algorithms, we set $\beta_1$ and $\gamma_1$ as suggested by \eqref{eq:convergence_accPD1a}, and \eqref{eq:convergence_2d_Px}, respectively and we obtain a slightly better theoretical bound. 

\begin{figure}[ht!]
\begin{center}
\includegraphics[width=0.92\linewidth]{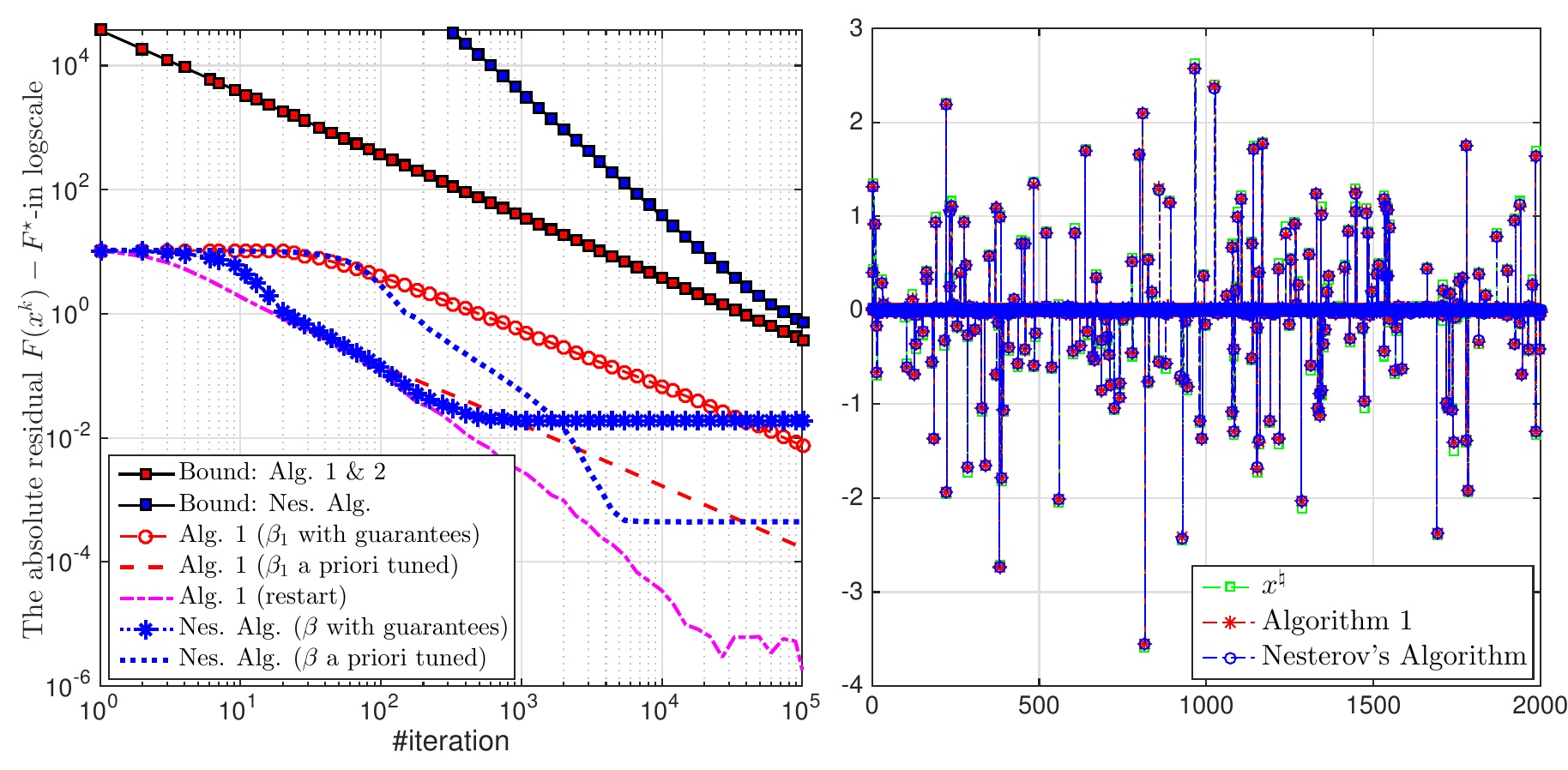}
\vspace{-2ex}
\caption{Comparison of Algorithm~\ref{alg:pd_alg1} $($Alg.~1$)$ and Nesterov's smoothing algorithm in \cite{Nesterov2005c} $($Nes.~Alg.$)$. 
Left: Convergence of the algorithms and their theoretical bounds; Right: Recovered solutions and $\xb^{\natural}$. 
\ref{eq:ac_pd_scheme}~$($red color plots$)$ features a steady $\mathcal{O}(1/k)$ decrease. 
Nesterov's smoothing algorithm $($blue color plots$)$ has a  slower start than \ref{eq:ac_pd_scheme}, then enters a quick decrease phase and finally stagnates when it has reached the minimum of the smoothed problem. 
At the end of the iteration budget,  \ref{eq:ac_pd_scheme} returns a more accurate solution. 
This behavior can be seen for  both parameter tuning strategies and is consistent with the theoretical bound.}\label{fig:sqrt_lasso}
\vspace{-1ex}
\end{center}
\end{figure}

In practice, the a priori bound on the distance $\norm{x^\star- x^0}_2$ may be very conservative. 
Hence, we will also use the quantity $\frac{\norm{b}_2}{\lambda\sqrt{m n}}$ as an estimate of $R_0 := \norm{x^\star - x^0}_2$. 
In Figure~\ref{fig:sqrt_lasso}, this corresponds to ``$\beta$ a priori tuned''.

We generate matrix $\Ab$ using standard Gaussian distribution $\Nc(0, 1)$ with $25\%$ correlated columns. 
The true parameter $\xb^{\natural}$ is a given $s$-sparse vector. We generate the observation $\bb$ as $\bb := \Ab\xb^{\natural} + \Nc(0,0.005)$, where the last term represents Gaussian noise.

Figure~\ref{fig:sqrt_lasso} illustrates the performance of the two algorithms for solving \eqref{eq:sqrt_lasso}, where $m=700$, $n=2000$ and $s = 200$.
The left plot shows the convergence behavior of both algorithms and their theoretical bounds. 
We clearly see that for each tuning strategy, Algorithm \ref{alg:pd_alg1} reaches a smaller final objective value than the one in \cite{Nesterov2005c}.
Moreover, Nesterov's method has the disadvantage of stagnating after a  given moment while Algorithm \ref{alg:pd_alg1} makes steady progress.
Restarting the method every $25$ iterations gives improvement again in the performance.
We can finally observe that both algorithms have better performance than their theoretical worst-case bounds.
Figure~\ref{fig:sqrt_lasso} (right) shows the solutions of both algorithms, and compares them with the true parameter $\xb^{\natural}$.
We see that the obtained solutions fit well $\xb^{\natural}$, and they are both sparse solutions.

%%% 6.4. Application in image processing
\subsection{Application to  image reconstruction}
In this example, we propose to use  the following total variation (TV) norm optimization formulation to reconstruct images from compressive measurements $b$ obtained via a linear operator $\Lc$:
\begin{equation}\label{eq:basis_pursuit}
\min_{\Zb\in\R^{p_1\times p_2}}\big\{f(\Zb) :=  \Vert\Db(\Zb)\Vert_1 \mid \Lc(\Zb) = \bb \big\},
\end{equation}
where $\Db$ is 2D discrete gradient operator, $\Lc:\R^{p_1\times p_2}\to\R^n$ is a linear transformation obtained  from a subsampling-FFT transformation, and $\bb\in\R^n$ is a compressive measurement vector.
We first reformulate problem \eqref{eq:basis_pursuit} into the form \eqref{eq:constr_cvx} using a splitting trick as follows:
\begin{equation}\label{eq:basis_pursuit2}
f^{\star} := \left\{\begin{array}{ll}
\displaystyle\min_{\xb := [\ub^{\top}, \vec{\Zb}^{\top}]^{\top}} &\big\{ f(\xb) := \Vert\ub\Vert_1 \big\}\\
\mathrm{s.t.} & \Lc(\Zb) = \bb, ~~\Db(\Zb) - \ub = 0.
\end{array}\right.
\end{equation}
We now apply Algorithm~\ref{alg:pd_alg1} and Algorithm~\ref{alg:pd_alg2} to solve this problem and compare them with Chambolle-Pock's method in \cite{Chambolle2011,vu2013variable}. We also compare our methods with a line-search variant of the Chambolle-Pock method recently proposed in \cite{malitsky2016first}. 
We note that our algorithms and the standard Chambolle-Pock method have the same per-iteration complexity. 
We first test all the algorithms on two MRI images: \texttt{MRI-brain-tumor} and \texttt{MRI-of-knee}.\footnote{These images are from \href{https://radiopaedia.org/cases/4090/studies/6567}{https://radiopaedia.org/cases/4090/studies/6567} and \href{https://www.nibib.nih.gov/science-education/science-topics/magnetic-resonance-imaging-mri}{https://www.nibib.nih.gov}}
We follow the procedure in \cite{knoll2011adapted} to generate the samples using a sample rate of $20\%$. 
Then, the vector of measurements $\bb$ is computed from $\bb := \Lc(\Zb^{\natural})$, where $\Zb^{\natural}$ is the original image.
Our experiment is implemented in Matlab 2014b running on a MacBook Pro (Retina, 2.7 GHz Intel Core i5, 16GB 1867 MHz).

In Chambollle-Pock's method, we use the parameters as suggested in  \cite{Chambolle2011} with $\tau = \sigma = \Vert\Ab\Vert^{-1}$ (see \cite{malitsky2016first}). 
For the line-search variant of the of Chambollle-Pock's method in \cite{malitsky2016first} (denoted by \texttt{Linesearch CP}), we tune the parameters to obtain the best performance on a set of sample images. 
These parameters are set to $\beta = 10^5$, $\mu = 0.7$ and $\delta  = 0.99$.
Since we aim at reducing the feasibility gap, as guided by our theoretical results above, we use $\beta_1 = 10^{-3}\Vert\Ab\Vert$ in our algorithms. 
%We run all the algorithms with $500$ iterations.
\begin{table}[ht!]
\newcommand{\cell}[1]{{\!\!}#1{\!\!\!}}
\begin{center}
\begin{footnotesize}
\caption{Performance and results of the five algorithms on two MRI images}\label{tbl:MRI_results}
\begin{tabular}{| c | rrrrr | rrrrr |}\hline
& \multicolumn{5}{|c|}{\texttt{MRI-knee ($650\times 650$)}} &  \multicolumn{5}{|c|}{\texttt{MRI-brain-tumor ($630\times 611$)}} \\ \hline
\cell{Algorithms} & \cell{$f(\Zb^k)$} & \cell{$\frac{\Vert\Lc(\Zb^k) -\bb\Vert}{\norm{\bb}}$} & \cell{\texttt{Error}} & \cell{PSNR} & \cell{Time[s]} & \cell{$f(\Zb^k)$} & \cell{$\frac{\Vert\Lc(\Zb^k) - \bb\Vert}{\norm{\bb}}$} & \cell{\texttt{Error}} & \cell{PSNR} & \cell{Time[s]} \\ \hline
\cell{ASGARD} &  \cell{39.927} & \cell{2.426e-03} & \cell{2.305e-02} & \cell{90.07} & \cell{172.85} & \cell{54.186} & \cell{2.206e-03} & \cell{3.179e-02} & \cell{85.81} & \cell{101.94} \\
\cell{ASGARD-restart} & \cell{40.126} & \cell{6.443e-04} & \cell{2.290e-02} & \cell{90.12} & \cell{173.41} & \cell{54.655} & \cell{7.122e-04} & \cell{3.109e-02} & \cell{86.00} & \cell{103.07} \\
\cell{ADSGARD} & \cell{39.372} & \cell{3.580e-03} & \cell{2.336e-02} & \cell{89.95} & \cell{210.36} & \cell{53.727} & \cell{3.353e-03} & \cell{3.238e-02} & \cell{85.65} & \cell{115.36} \\
%\cell{ADSGARD-restart} & \cell{41.144} & \cell{2.269e-03} & \cell{2.386e-02} & \cell{89.77} & \cell{181.48} & \cell{54.998} & \cell{2.311e-03} & \cell{3.272e-02} & \cell{85.56} & \cell{119.66} \\
\cell{Chambolle-Pock} &  \cell{39.931} & \cell{3.710e-02} & \cell{6.689e-02} & \cell{80.82} & \cell{160.12} & \cell{54.408} & \cell{5.748e-02} & \cell{1.232e-01} & \cell{74.04} & \cell{107.71} \\
\cell{Linesearch CP} & \cell{41.291} & \cell{4.514e-03} & \cell{2.563e-02} & \cell{89.15} & \cell{469.35} & \cell{54.720} & \cell{4.005e-03} & \cell{3.520e-02} & \cell{84.92} & \cell{317.09} \\
\hline
\end{tabular}
\end{footnotesize}
\end{center}
\end{table}
The performance and results of these algorithms are summarized in Table~\ref{tbl:MRI_results}, where $\texttt{Error} := \frac{\Vert\Zb^k - \Zb^{\natural}\Vert_F}{\Vert\Zb^{\natural}\Vert_F}$ presents the  error between the original image $\Zb^{\natural}$ to the reconstruction $\Zb^k$ after $k=500$ iterations.

As we can see both Algorithm~\ref{alg:pd_alg1} and Algorithm~\ref{alg:pd_alg2} have comparable performance with the line-search variant of Chambolle-Pock's method in terms of accuracy, while outperform the standard variant. In fact, our standard ASGARD method is still slightly better than this line-search version. The ASGARD with restart gives the best performance in terms of accuracy as well as PSNR (peak signal-to-noise ratio). 
The \texttt{Linesearch CP} is more than three times slower than the other methods in this experiment. 

The   reconstructed images are  revealed in Figure~\ref{fig:MRI_image}. 
As we can see from this plot, the quality of recovery image is very close to the original image for the sampling rate of $20\%$.
\begin{figure}[ht!]
\begin{center}
\includegraphics[width=1\linewidth]{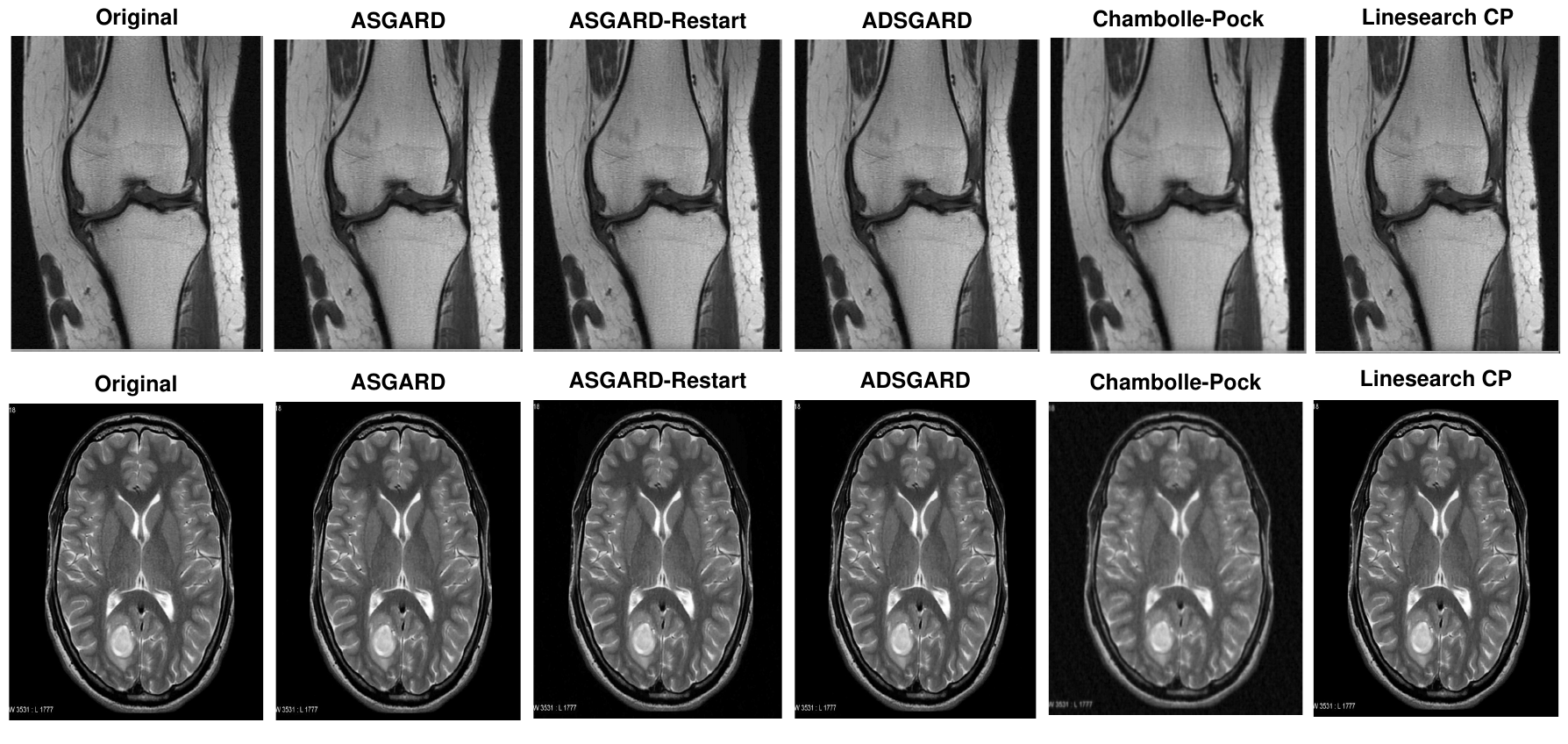}
\vspace{-4ex}
\caption{The original image and the reconstructed images of the five algorithms.}\label{fig:MRI_image}
\vspace{-1ex}
\end{center}
\end{figure}
Our algorithms give slightly higher PSNR for both images.
Chambolle-Pock's algorithm has the worst performance compared to the others.
However, the performance of this algorithm can slightly be changed and depends on the tuning strategy of the step-size parameters \cite{Chambolle2011} which is often very hard to tune a priori in practice without using heuristic strategy.

\section{A comparison between our results and existing methods}\label{sec:comparison}
We have presented a new primal-dual framework and two main algorithms (one with a primal flavor and one with a dual flavor) together with two special cases.
Now, let us summarize the main differences between our approach and existing methods in the literature.

The composite convex problem \eqref{eq:primal_cvx} can be written as a convex-concave saddle point problem:
\begin{equation}\label{eq:SP}
\min_{\xb\in\R^n}\max_{\yb\in\R^m}\set{ \Phi(\xb, \yb) := f(\xb) + \iprods{\Ab\xb, \yb} - g^{\ast}(\yb)}.
\end{equation}
The optimality condition of this problem is a maximal monotone inclusion, and can be reformulated as a variational inequality (VIP) \cite{Bauschke2011,Facchinei2003,Rockafellar2004}.
While \eqref{eq:SP} is classical, it has broad applications in image processing, machine learning, game theory among many others \cite{Bauschke2011,Combettes2011,Facchinei2003,Nemirovskii2004}.
Recent development in solution methods for solving \eqref{eq:SP} has attracted a great attention. Let us briefly survey some notable works which we find most related to ours.

Nemirovskii in \cite{Nemirovskii2004} proposed an averaging scheme to solve \eqref{eq:SP} based on its VIP formulation. 
His algorithm requires a proximal step at each iteration, which is usually not easy to compute in applications. 
He proved an $\mathcal{O}(1/k)$ convergence rate in an ergodic sense for the primal-dual gap function under the boundedness of both the primal and dual domains.
Nesterov in \cite{Nesterov2007a} proposed a similar method to solve VIP that covers \eqref{eq:SP} as a special case. 
By using smoothing techniques, he could prove an $\mathcal{O}(1/k)$ convergence rate for the gap function in an ergodic sense as in  \cite{Nemirovskii2004}. 
While this method has a simple subproblem, it requires  the underlying operator to be Lipschitz continuous, and both primal and dual domain are bounded.

One of the most celebrated  works for solving nonsmooth convex problem \eqref{eq:primal_cvx} is due to Nesterov in \cite{Nesterov2005c}.
By combining both the smoothing technique and his accelerated gradient-type method, he proposed an algorithm to solve  \eqref{eq:primal_cvx} just using proximal operators of $f$ and $g^{\ast}$. 
The method achieves an $\mathcal{O}(1/\varepsilon)$ complexity to obtain  an $\varepsilon$-solution. 
However, this algorithm has two disadvantages. 
First, it requires the boundedness of both the primal and dual domains. 
Second, the smoothness parameter depends on both the accuracy $\varepsilon$ and the diameter of the primal and dual domains. 
An improvement was proposed in \cite{Nesterov2005d} to remove the second disadvantage. 
But this algorithm requires a symmetric update which leads to a different per-iteration complexity than \cite{Nesterov2005c}.

Another remarkable work was proposed by Chambolle and Pock in \cite{Chambolle2011}. 
This algorithm solves \eqref{eq:SP} just using the proximal operator of $f$ and $g^{\ast}$. 
Similar to \cite{Nemirovskii2004}, they also proved an $\mathcal{O}(1/k)$ convergence rate on the gap function in an ergodic sense requiring the boundedness of both the primal and dual domain. 
An improvement on the parameter range was proposed in \cite{He2012}. However, the convergence guarantee remains preserved under the same assumptions.
Extensions of \cite{Chambolle2011} can be found in several papers, including \cite{combettes2012primal,Condat2013,malitsky2016first}.

Shefi and Teboulle provided a comprehensive study on the convergence rate of proximal-type methods for solving \eqref{eq:primal_cvx} in \cite{Shefi2014}, which extended the work \cite{Chen1994}.
They discussed different variants of the primal-dual proximal-type methods including Chambole-Pock's scheme, alternating minimization algorithms (AMA), and alternating direction methods of multipliers (ADMM).
With a proper choice of metric, they showed an $\mathcal{O}(1/k)$-convergence guarantee on the primal-dual gap function in an ergodic sense.
Their convergence guarantee indeed unifies several schemes, but is different from our results in this paper. 
First, they used different metric for proximal terms depending on $\Ab$. This makes the subproblem much harder to solve.
Second, they provided a guarantee for convergence rate on the gap function, which is not obvious to transform it into a separated guarantee for the objective and constraints in constrained convex optimization settings such as \eqref{eq:constr_cvx}. 
Moreover, the rate on the feasibility in the constrained setting reduces to $\mathcal{O}(1/\sqrt{k})$ (see \cite[Theorem 5]{Shefi2014}).
Finally, the gap function is defined on a given domain and it is not clear how to choose the radius of this domain.

Other methods for solving \eqref{eq:SP} can be found in the literature including \cite{Chambolle2011,Chen2013a,Davis2014a,Davis2014,Davis2014b,he2016accelerated,Ouyang2013,Ouyang2014}.
Each method requires different structure assumptions and achieves different guarantees mostly in an ergodic or averaging sense.
For instance, in \cite{Chen2013a}, the authors required $f$ to have a  Lipschitz gradient which is much more limited than our assumptions.
The authors in \cite{he2016accelerated} specified a so-called hybrid proximal extragradient (HPE) framework proposed in \cite{solodov1999hybrid} to solve \eqref{eq:SP}. 
While this method achieves an $\mathcal{O}(1/\varepsilon)$ complexity without any boundedness assumption on the domains, it is rather complicated due to a double loop of inner and outer iterations.

Regarding the constrained setting \eqref{eq:constr_cvx}, primal-dual first-order methods for directly solving  large-scaled settings of this problems are also well-developed.
Let us briefly discuss some of these methods here.
A natural approach is due to dual gradient-type methods. 
Such methods often use directly the subgradients of the dual function or smooth the dual function using proximity terms \cite{Necoara2008,necoara2014iteration}. 
While the former gives a slow convergence rate, which is $\mathcal{O}(1/\sqrt{k})$, the latter uses Nesterov's smoothing technique in \cite{Nesterov2005c}, and therefore faces the same drawbacks. In addition, for the setting \eqref{eq:constr_cvx}, the dual domain is unbounded, which leads to a difficulty to estimate the worst-case complexity bounds. 
Another approach is using penalty or augmented Lagrangian as considered in \cite{Lan2013,Lan2013b,Nedelcu2014}, which often leads a two loop algorithms and is much more complicated to control the specified parameters and accuracies in practice.
Alternating direction methods are perhaps the most common use for solving \eqref{eq:constr_cvx}, see \cite{Boyd2011,Chambolle2011,Davis2014a,Davis2014,Davis2014b,Ouyang2013,Ouyang2014}.
This method often requires an additional structure assumption such as $f$ is the sum of two separable convex functions. 
Without this structure, auxiliary variables need to be introduced, which is again equivalent to the two block case, see, e.g., \cite{Boyd2011,wang2013solving,Wei20131}.
Two common methods in this direction are alternating minimization algorithm (AMA) and alternating direction method of multipliers (ADMM).
While AMA can be viewed as a forward-backward splitting scheme for the dual formulation~\eqref{eq:dual_cvx}, and requires strict assumptions to guarantee convergence (e.g., one objective component is strongly convex), ADMM is equivalent to the Douglas-Rachford splitting method applying to the dual, and has a convergence guarantee under mild assumptions.
Recently, the convergence rate of ADMM has been attracted a great attention. There are vast of papers studied ADMM and its variants, including \cite{Boyd2011,Davis2014,He2012a,Ouyang2013,Ouyang2014}.
We would also like to mention that during the revision process of this paper, variants of ALM and ADMM with similar convergence rates as ours 
have been proposed in~\cite{xu2016accelerated}.

In summary, this paper has tried to overcome several issues we have mentioned above in recent primal-dual methods. 
Let us high-light the following characterizations.

%%%% 7.a. Problem structure assumptions.
\paragraph{Problem structure assumptions}
Our approach requires the convexity and the existence of primal and dual solutions of \eqref{eq:primal_cvx}. 
In the unconstrained setting \eqref{eq:primal_cvx}, we require $g$ to be Lipschitz continuous, which is often the case in practice.
We argue that such assumptions are mild for \eqref{eq:primal_cvx} and \eqref{eq:dual_cvx} and can be verified a priori. 
We emphasize that existing primal-dual methods in \cite{Chambolle2011,Davis2014a,Davis2014, Davis2014b,Ouyang2013,Ouyang2014,Shefi2014} require other structure assumptions on either $f$, such as Lipschitz gradient, strong convexity, error bound conditions, or the boundedness of both the primal and dual feasible sets, which may not be satisfied for  \eqref{eq:primal_cvx}, and especially for \eqref{eq:constr_cvx} \cite{Chambolle2011,Davis2014a,Davis2014,Davis2014b,Ouyang2013,Ouyang2014}.

%%%% 7.b. Convergence characterization 
\paragraph{Convergence characterization} 
We characterize an $\mathcal{O}(1/k)$-convergence rate for both the objective residual $f(\xb^k) - \fopt$ and the feasibility violation $\Vert\Ab\xb^k-\cb\Vert_{\Yc,\ast}$ for the constrained convex problem \eqref{eq:constr_cvx}.
Our finding is the best-known result under very mild assumptions and low per-iteration complexity.
In the composite form \eqref{eq:primal_cvx}, we also achieve the same $\mathcal{O}(1/k)$-convergence rate as in the seminar work \cite{Nesterov2005c}.
Tables~\ref{tbl:compare1} and \ref{tbl:compare2} compare our theoretical convergence rate results with the most recent selected algorithms in the literature for solving \eqref{eq:primal_cvx} and \eqref{eq:constr_cvx}, respectively.
%%
%%
%%% Table 1.1.
\begin{landscape}
\centering 
\begin{table}[ht!]
\vspace{-3ex}
\caption{A comparison of convergence rates between our algorithms and selected existing methods for solving \eqref{eq:primal_cvx} and \eqref{eq:constr_cvx}. 
Here, all algorithms do not involve any large matrix inversion or ``complex'' convex subproblem, and $w_k := \frac{1}{k} \sum_{l=1}^k w^l$; $K$ is the iteration budget $($see Subsection~\ref{subsec:compare_55}$)$; and $\sigma$ is the step-size in \cite{Chambolle2011}.}\label{tbl:compare1}
\begin{tabular}{|l|p{1.9in}|p{1.9in}|p{2.8in}|}
\hline
\multicolumn{1}{|c|}{Paper}& $\dom{f}$ bounded\newline and $g$ Lipschitz & {~~~~~~~~~~~~~~~}$g$ Lipschitz & \hspace{20ex}$g = \delta_{\set{\cb}}$\newline{ \color{white}.~}\hspace{10ex}(optimality and feasibility) \\
\hline
Nesterov~\cite{Nesterov2005c} & $P(x^k) - P^{\star} \! \leq\! \frac{2 \sqrt{\bar L_A D_{\Xc} D_\Yc}}{K}\big(1 \! + \! \frac{K^2}{k^2}\big)  $ & $ P(x^k) - P^{\star}\leq \frac{\epsilon}{2} + \frac{8 \bar L_A \norm{x^0 - x^\star}^2 D_\Yc}{\epsilon (k+1)^2} $ & not applicable \\[2ex]
Chambolle-Pock~\cite{Chambolle2011}\hspace{-0.5em}~& $G(w_k) \leq \frac{\sigma \bar L_A D_\Xc + \sigma^{-1} D_\Yc}{k}$& convergence &  convergence \\[2ex]
ASGARD (Sec. \ref{sec:alg_scheme})&$P(x^k) - P^{\star} \leq \frac{2\sqrt{2}\sqrt{\bar L_A D_\Yc D_\Xc}}{k}$&
~\hspace{-0.5em}$P(x^k) - P^{\star} \leq \frac{\bar L_A}{2\beta_1 k}\norm{\bar x^0 \!-\! x^\star}^2
+ \frac{2 \beta_1}{k}D_\Yc$\hspace{-0.5em}~ &~\hspace{-1em} $|f(x^k)\! - \!f^\star| \leq \frac{\bar L_A}{\beta_1k}\norm{\bar x^0\! -\! x^\star}^2 \!+\! \frac{3 \beta_1}{k} \norm{\dot y\! -\! y^\star}^2 \! +\! \frac{\beta_1}{k} \norm{y^\star}^2 $ \hspace{-1em}~ \\
           &       &      & \!\!$\norm{A x^k - c}_{\Yc,\ast} \leq \frac{\beta_1}{k+1}\big(2 \norm{\dot y - y^\star} + \frac{\sqrt{\bar L_A}}{\beta_1}\norm{\bar x^0 - x^\star} \big)  $ \\[2ex]
ADSGARD (Sec. \ref{sec:algorithm3})\hspace{-0.5em}~& $G(w^k) \leq \frac{2\sqrt{\bar L_A D_\Yc D_\Xc}}{k}$  &~\hspace{-0em}$G(w^k) \leq \frac{\gamma_1}{k+1}\norm{\dot x \!-\! x^\star}^2 + \frac{2\bar{L}_{\Ab}}{\gamma_1 k}D_\Yc $\hspace{0em}~&\!\!$|f(x^k)\! -\! f^\star| \leq \frac{3\gamma_1}{k} \norm{\dot x \!-\! x^\star}^2  \!+\! \frac{2 \bar L_A}{\gamma_1k} \norm{\dot y \!-\! y^\star}^2 \!+\! \frac{L_A}{\gamma_1 k}\norm{y^\star}^2 $\!\!\\
& & &  $\norm{A x^k - c}_{\Yc,\ast}\leq \frac{\bar L_A}{\gamma_1 k}\big( 2 \norm{\dot y - y^\star} + 2 \gamma_1 \norm{\dot x - x^\star} \big)$\\[1ex]
\hline
\end{tabular}
\end{table}
\begin{table}[ht!]
\vspace{-0ex}
\centering
\caption{A comparison of convergence rates between our algorithms and selected existing methods for solving \eqref{eq:constr_cvx}. 
Here, all algorithms may involve  ``complex'' convex subproblems or matrix inversions; and $\rho$ is the penalty parameter in \cite{Lan2013b} and \cite{Monteiro2012b}.}\label{tbl:compare2}
\begin{tabular}{|l|c|} \hline
\multicolumn{1}{|c|}{Paper} & $g = \delta_{\set{\cb}}$ \\ \hline
ALM~\cite{Lan2013b} & $|f(x^k) - f^\star| \leq \frac{6}{\rho \sqrt{k}}\norm{y^\star}\norm{y^0 - y^\star}$\\
& $\norm{A x^k - c}_{\Yc,\ast} \leq \frac{3}{\rho \sqrt{k}}\norm{y^0-y^\star}$ \\[2ex]

ADMM$^1$~\cite{Monteiro2012b}$^2$ & $|f_1((x_1)_k)+f_2((x_2)_k) - f_1(x_1^\star) - f_2(x_2^\star)|
 \leq \frac{6 + 4 \sqrt{2}}{k} \big(\frac{1}{\rho}\norm{y^0 - y^\star}^2 + \rho \norm{x_1^0 - x_1^\star}^2_{A_1^* A_1}\big) $  \\
& $\norm{A_1 (x_1)_k + A_2 (x_2)_k - c} \leq \frac 2k \sqrt{\frac{1}{\rho^2}\norm{y^0 - y^\star}^2 + \norm{x_1^0 - x_1^\star}^2_{A_1^* A_1}} $  \\[2ex]

ASALGARD (Sec. \ref{subsec:alsmoother}) &  $|f(x^k) - f^\star|\leq \frac{10 \norm{y^\star}_\Yc \norm{\dot y - y^\star}_\Yc }{\gamma_0 (k+1)^2}$\\
&  $\norm{A x^k - c}_{\Yc,\ast} \leq \frac{8 \norm{\dot y - y^\star}_\Yc}{\gamma_0(k+2)^2} $ \\[1ex]
\hline
\end{tabular}
\vspace{-2ex}
\end{table}
\vspace{-1ex}
{\footnotesize \em $^1$Note that ADMM splits the objective in 2 parts, which may make it very efficient for some problems. 
$^2$The original result is stated by means of $\epsilon$-subdifferentials.}
\end{landscape}
%%
%%

%%%% 7.c. Decomposition methods.
\paragraph{Decomposition methods} 
Our algorithms naturally support decomposable structures in $f$ without either reformulating the problem as in ADMM or requiring additional assumptions as in parallel and multi-block ADMM \cite{lin2015sublinear}.
Both algorithms simply require only one proximal operator of $f$ and $g^{\ast}$, one matrix vector multiplication, and one adjoint per iteration.

%%% 7.d. Smoothing and smoothness parameter updates.
\paragraph{Smoothing and smoothness parameter updates}
In contrast to proximal-type approaches in \cite{Monteiro2010a,Monteiro2011,Shefi2014,solodov1999hybrid} where the subproblem is often more complicated to solve, we instead exploit Nesterov's smoothing technique \cite{Nesterov2005c} which allows us to use proximal operators of $f$ and $g^{\ast}$.
However, we use differentiable smoothing functions as compared to Nesterov's smoothing technique in \cite{Nesterov2005c}. 
We propose explicit rules to update the smoothness parameters simultaneously at each iteration. 
We emphasize that this is one of the key contributions of this paper. 
To the best of our knowledge, this is the first adaptive primal-dual algorithms for smoothness parameters without sacrificing the $\mathcal{O}(1/k)$ rate and requiring additional assumptions.

%%% 7.e. Averaging vs. non-averaging.
\paragraph{Averaging vs.\ non-averaging} 
Most existing methods employ either non-weighted averaging \cite{Chambolle2011,He2012a,He2012,Shefi2014} or weighted averaging schemes \cite{Davis2014,Ouyang2013,Ouyang2014} to guarantee the $\mathcal{O}(1/k)$ rate on the primal sequence. 
While we also provide a weighted averaging scheme (Algorithm~\ref{alg:pd_alg2}), we alternatively derive a method (Algorithm~\ref{alg:pd_alg1}) without any averaging in the primal for solving \eqref{eq:primal_cvx}.
The non-averaging schemes are important since taking average may destroy key structures, such as the sparsity or low-rankness in sparse or low-rank optimization. 
Our weighted averaging scheme has increasing weight at the later iterates compared to non-weighted averaging schemes  \cite{Chambolle2011,He2012a,He2012,Shefi2014}. 
As indicated in \cite{Davis2014a,Davis2014b}, weighted averaging schemes  has better performance guarantee than non-weighted ones. 

We have attempted  to review various primal-dual methods which are most related to our work. 
It is still worth mentioning other primal-dual methods that are based on augmented Lagrangian methods such as alternating direction methods (e.g., AMA, ADMM and their variants)  \cite{Boyd2011,Lan2013,Lan2013b,Tseng1991a}, Bregman and other splitting methods \cite{Bauschke2011,combettes2012primal,Esser2010a,Goldstein2013,Monteiro2010,Monteiro2011,Monteiro2012b}, and using variational inequality frameworks \cite{Chambolle2011,He2012,He2012b}.
While most of these works have not considered the global convergence rate of the proposed algorithms, a few of them characterized the convergence rate in unweighted averaging schemes or used a more general variational inequality/monotone inclusion framework to study \eqref{eq:primal_cvx}, \eqref{eq:dual_cvx} and \eqref{eq:constr_cvx}. Hence, the results achieved in these papers are distinct from our findings.
%%%% End of Section 7.

%%%% Acknowledgments.
\begin{footnotesize}
\section*{Acknowledgments}
This work is supported in part by the NSF-grant No. DMS-1619884, USA; and 
the European Commission under Grant ERC Future Proof, SNF 200021-146750, and SNF CRSII2-147633.
%the European Commission under the grants MIRG-268398 and ERC Future Proof, and by the Swiss Science Foundation under the grants SNF 200021-132548, SNF 200021-146750 and SNF CRSII2-147633.
We are thankful Ahmet Alacaoglu, Baran Gozc\"{u}, and Alp Yurtsever for their help on the last numerical example, and Van Quang Nguyen for his careful proofreading.
\end{footnotesize}
%%% End of Acknowledgments.

%% Appendix.
\appendix
\normalsize
%%%% APPENDIX:  Averaging vs.\ non-averaging
\section{The proof of theoretical results}\label{sec:appendix}
This section provides the full proof of Lemmas and Theorems  in the main text.

%%%% A.0. Technical results.
\subsection{Technical results}\label{apdx:tec_results}
We first prove the following basic lemma, which will be used to analyze the convergence of our algorithms in the main text.

%% Lemma A.1. Basic properties of smoothed function.
\begin{lemma}\label{lem:move_beta}
Let $h$ be a proper, closed and convex function defined on $\Zc$, and $h^{\ast}$ is its Fenchel conjugate. 
Let $b_{\Zc}$ be a Bregman distance as defined in \eqref{eq:bregman_dist} with a weighted norm. 
We define a smoothed approximation of $h$ as
\begin{equation}\label{eq:h_beta}
h_{\beta}(\zb;\dot{\zb}) := \max_{\hat{\zb}\in\Zc}\set{ \iprods{\zb, \hat{\zb}} - h^{\ast}(\hat{\zb}) - \beta b_{\Zc}(\hat{\zb}, \dot{\zb})},
\end{equation}
where $\dot{\zb}\in\Zc$ is fixed and $\beta > 0$ is a smoothness parameter.
We also denote by $\zb^{\ast}_{\beta}(\zb;\dot{\zb})$ the solution of the maximization problem in \eqref{eq:h_beta}.
Then, the following facts hold:
\begin{enumerate}
\item[$\mathrm{(a)}$] We have a relation between the partial derivatives of $(\zb, \beta) \mapsto h_{\beta}(\zb; \dot{\zb})$ as
\begin{equation*}
\frac{\partial{h_{\beta}}(\zb; \dot{\zb})}{\partial{\beta}}(\beta) = -b_{\Zc}(\zb_\beta^{\ast}(\zb; \dot{\zb}), \dot{\zb}) = - b_{\Zc}(\nabla{h}_\beta(\zb; \dot{\zb}), \dot{\zb}).
\end{equation*}

\item[$\mathrm{(b)}$] For all $\zb\in\Zc$, $\beta \mapsto h_{\beta}(\zb; \dot{\zb})$ is convex, and for $\beta_{k+1} ,  \beta_k > 0$ and $\bar{\zb} \in \Zc$, we have
\begin{align}\label{eq:convexity_h_beta}
h_{\beta_{k+1}}(\bar{\zb}; \dot{\zb}) &\leq h_{\beta_k}(\bar{\zb}; \dot{\zb}) - (\beta_k - \beta_{k+1}) \frac{\partial{h_{\beta}}(\bar{\zb};  \dot{\zb})}{\partial{\beta}}(\beta_{k+1})  \\
&= h_{\beta_k}(\bar{\zb}; \dot{\zb}) + (\beta_k - \beta_{k+1})b_{\Zc}(\nabla{h}_{\beta_{k+1}}(\bar{\zb}; \dot{\zb}), \dot{\zb}). \notag
\end{align}

\item[$\mathrm{(c)}$]  $h_{\beta}(\cdot; \dot{\zb})$ has a $1/\beta$-Lipschitz gradient in $\norm{\cdot}_{\Zc,*}$. 
Hence, for all $\bar{\zb}, \hat{\zb} \in \Zc$, we have
\begin{align}\label{eq:taylor_nabla_h_beta}
&h_{\beta}(\bar{\zb}; \dot{\zb}) \leq h_{\beta} (\hat{\zb}; \dot{\zb}) + \iprods{\nabla{h}_\beta(\hat{\zb}; \dot{\zb}), \bar{\zb} - \hat{\zb}} + \frac{1}{2\beta} \norm{\bar{\zb} - \hat{\zb}}_{\Zc,*}^2.\\
\label{eq:cocoercivity_nabla_h_beta}
&h_{\beta}(\hat{\zb}; \dot{\zb}) +\iprods{\nabla{h}_{\beta}(\hat{\zb}; \dot{\zb}), \bar{\zb} - \hat{\zb}} \leq h_{\beta}(\bar{\zb}; \dot{\zb}) - \frac{\beta}{2} \norm{\nabla{h}_{\beta}(\hat{\zb};\dot{\zb}) - \nabla{h}_\beta(\bar{\zb}; \dot{\zb})}_{\Zc}^2.
\end{align}

\item[$\mathrm{(d)}$] Both functions $h$ and $h_{\beta}$ evaluated at different points $\zb, \hat{\zb} \in \Zc$ satisfy  
\begin{equation}\label{eq:magic_return_to_zero}
h_{\beta}(\hat{\zb}; \dot{\zb}) + \iprods{\nabla{h}_\beta(\hat{\zb}; \dot{\zb}), \zb - \hat{\zb}}  \leq h(\zb) - \beta b_{\Zc}(\nabla{h}_\beta(\hat{\zb}; \dot{\zb}), \dot{\zb}).
\end{equation}

\item[$\mathrm{(e)}$] If $\norm{\cdot}_{\Zc}$ is derived from a scalar product, then, for all $\tau > 0$, $\bar{\zb}, \hat{\zb} \in \Zc$, we have
\begin{equation} \label{eq:combine_tau}
{\!}(1 \!-\! \tau) \norm{\nabla{h}_{\beta}(\hat{\zb}; \dot{\zb}) \!-\! \nabla{h}_{\beta}(\bar{\zb}; \dot{\zb}) }^2_{\Zc} +  \tau \norm{\nabla{h}_{\beta}(\hat{\zb}; \dot{\zb}) \!-\! \dot{\zb}}^2_{\Zc} \geq \tau (1\!-\! \tau) \norm{\nabla{h}_{\beta}(\bar{\zb}; \dot{\zb}) \!-\! \dot{\zb}}^2_{\Zc}.{\!\!\!\!\!\!}
\end{equation}

\item[$\mathrm{(f)}$] We can control the influence of a change in the center points from $\dot{\zb}_1$ to $\dot{\zb}_2$ using the following estimate: 
\begin{equation}\label{eq:move_ydot}
\begin{array}{ll}
h_{\beta}(\zb; \dot{\zb}_2) &\leq~ h_{\beta}(\zb; \dot{\zb}_1) - \frac{\beta}{2}\norm{\zb^{\ast}_{\beta}(\zb; \dot{\zb}_1) - \zb^{\ast}_{\beta}(y; \dot{\zb}_2)}^2_{\Zc} \vspace{0.75ex}\\
&   +~ \beta \left[ b_{\Zc}(\zb^{\ast}_{\beta}(\zb; \dot{\zb}_2), \dot{\zb}_1) -  b_{\Zc}(\zb^{\ast}_{\beta}(\zb; \dot{\zb}_2), \dot{\zb}_2)\right].
\end{array}
\end{equation}
\end{enumerate}
\end{lemma}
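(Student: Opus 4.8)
The plan is to derive every part from a single structural observation. Define $\Phi_{\dot\zb}(\hat\zb) := \iprods{\zb,\hat\zb} - h^{\ast}(\hat\zb) - \beta b_\Zc(\hat\zb,\dot\zb)$, so that $h_\beta(\zb;\dot\zb) = \max_{\hat\zb\in\Zc}\Phi_{\dot\zb}(\hat\zb)$ is the Fenchel conjugate at $\zb$ of the function $\hat\zb\mapsto h^{\ast}(\hat\zb)+\beta b_\Zc(\hat\zb,\dot\zb)$. Since $p_\Zc$ is $1$-strongly convex, $b_\Zc(\cdot,\dot\zb)$ is $1$-strongly convex in $\norm{\cdot}_\Zc$, hence this inner function is $\beta$-strongly convex, its maximizer $\zb^{\ast}_\beta(\zb;\dot\zb)$ is unique, and the standard conjugate-duality correspondence gives that $h_\beta(\cdot;\dot\zb)$ is differentiable with $\nabla h_\beta(\zb;\dot\zb)=\zb^{\ast}_\beta(\zb;\dot\zb)$. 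I will use this identification throughout.

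With it, parts (a)--(c) are bookkeeping. For (a), $h_\beta$ is a maximum over $\hat\zb$ of a map that is affine in $\beta$ with slope $-b_\Zc(\hat\zb,\dot\zb)$; Danskin's theorem (valid because the maximizer is unique) yields $\partial h_\beta/\partial\beta = -b_\Zc(\zb^{\ast}_\beta,\dot\zb)$, and replacing $\zb^{\ast}_\beta$ by $\nabla h_\beta$ gives the second equality. Part (b) is then immediate: $\beta\mapsto h_\beta(\zb;\dot\zb)$ is a pointwise supremum of affine functions of $\beta$, hence convex, and \eqref{eq:convexity_h_beta} is just the tangent-line inequality of a convex function evaluated at $\beta_{k+1}$ with the derivative from (a) substituted. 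Part (c) is Nesterov's smoothing estimate: the conjugate of a $\beta$-strongly convex function has a $(1/\beta)$-Lipschitz gradient in $\norm{\cdot}_{\Zc,\ast}$, and \eqref{eq:taylor_nabla_h_beta}--\eqref{eq:cocoercivity_nabla_h_beta} are the descent lemma and its cocoercivity companion for an $L$-smooth convex function with $L=1/\beta$.

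Part (d) is a one-line Fenchel--Young computation: with $\hat\ub:=\nabla h_\beta(\hat\zb;\dot\zb)=\zb^{\ast}_\beta(\hat\zb;\dot\zb)$ and $h_\beta(\hat\zb;\dot\zb)=\iprods{\hat\zb,\hat\ub}-h^{\ast}(\hat\ub)-\beta b_\Zc(\hat\ub,\dot\zb)$, the left-hand side of \eqref{eq:magic_return_to_zero} collapses to $\iprods{\zb,\hat\ub}-h^{\ast}(\hat\ub)-\beta b_\Zc(\hat\ub,\dot\zb)$, and the Fenchel--Young inequality $\iprods{\zb,\hat\ub}-h^{\ast}(\hat\ub)\le h(\zb)$ closes it. Part (e) is purely algebraic: setting $\ub=\nabla h_\beta(\hat\zb;\dot\zb)$, $\vb=\nabla h_\beta(\bar\zb;\dot\zb)$, $\wb=\dot\zb$ and $m:=(1-\tau)\vb+\tau\wb$, the identity $(1-\tau)\norm{\ub-\vb}_\Zc^2+\tau\norm{\ub-\wb}_\Zc^2=\norm{\ub-m}_\Zc^2+\tau(1-\tau)\norm{\vb-\wb}_\Zc^2$ holds for every real $\tau$ (expand both sides in the inner product), and discarding the nonnegative term $\norm{\ub-m}_\Zc^2$ gives \eqref{eq:combine_tau}.

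The main obstacle is part (f), which compares two smoothing centers. I would put $\ub_1:=\zb^{\ast}_\beta(\zb;\dot\zb_1)$ and $\ub_2:=\zb^{\ast}_\beta(\zb;\dot\zb_2)$ and use that $\Phi_{\dot\zb_1}$ is $\beta$-strongly concave with maximizer $\ub_1$. Because $h^{\ast}$ may be nonsmooth, I must argue through the subgradient form of strong convexity --- using $0\in\partial(-\Phi_{\dot\zb_1})(\ub_1)$ rather than a gradient --- to obtain $\Phi_{\dot\zb_1}(\ub_2)\le\Phi_{\dot\zb_1}(\ub_1)-\tfrac{\beta}{2}\norm{\ub_2-\ub_1}_\Zc^2$. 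Expanding $\Phi_{\dot\zb_1}(\ub_2)$ and $\Phi_{\dot\zb_2}(\ub_2)=h_\beta(\zb;\dot\zb_2)$, which share the term $\iprods{\zb,\ub_2}-h^{\ast}(\ub_2)$ and differ only in $\beta b_\Zc(\ub_2,\dot\zb_1)$ versus $\beta b_\Zc(\ub_2,\dot\zb_2)$, I eliminate the common quantity and rearrange to \eqref{eq:move_ydot}. The care is entirely in the nonsmoothness of $h^{\ast}$ and in tracking the two Bregman terms without tacitly assuming $\dot\zb_1=\dot\zb_2$.
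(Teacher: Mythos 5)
Your proposal is correct and follows essentially the same route as the paper's proof: differentiability and the gradient identification $\nabla h_{\beta}(\zb;\dot{\zb}) = \zb^{\ast}_{\beta}(\zb;\dot{\zb})$ via the marginal-derivative (Danskin-type) argument for (a), convexity in $\beta$ as a supremum of affine maps plus the tangent-line inequality for (b), Nesterov's strong-convexity-to-smoothness correspondence with the classical descent and cocoercivity inequalities for (c), the Fenchel--Young collapse for (d), the same parallelogram-type norm identity for (e), and strong concavity of the inner objective around its maximizer with the Bregman-term swap for (f). The only cosmetic difference is that you make explicit the subgradient form of the optimality condition in (f), which the paper leaves implicit in the phrase ``its optimality condition.''
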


%%% Proof of Lemma A.1.
\begin{proof}
We prove from item~$\mathrm{(a)}$ to item $\mathrm{(f)}$ as follows.

\indent$\mathrm{(a)}$
Since $h_{\beta}(\yb)$ is defined by the maximization of a strongly convex program in \eqref{eq:h_beta}, where the function in the $\max$ operator is linear in $\beta$ and convex in $\zb$, the minimizer $\zb^{\ast}_{\beta}(\zb;\dot{\zb})$ is unique. 
By the classical marginal derivative theorem \cite{Rockafellar1970}, the function is differentiable with respect to $\beta$ and $\zb$. 
In addition, $\nabla_{\zb}{h_{\beta}}(\zb;\dot{\zb}) = \zb^{\ast}_{\beta}(\zb;\dot{\zb})$.

\indent$\mathrm{(b)}$
The function $\beta \mapsto h_{\beta}(\zb; \dot{\zb})$ is the maximization of a linear function in $\beta$ indexed by $y$ and $\dot y$. Hence, it is convex. 
The remaining statement follows by the convexity of $h_{\beta}$ with respect to  $\beta$ and item~$\mathrm{(a)}$.

\indent$\mathrm{(c)}$
Since $\beta b_{\Zc}(\cdot, \dot{\zb})$ is $\beta$-strongly convex in the weighted-norm $\norm{\cdot}_{\Zc}$, $h_{\beta}(\cdot; \dot{\zb})$ is $\frac{1}{\beta}$-Lipschitz~\cite{Nesterov2005c} in the corresponding dual norm. The inequalities \eqref{eq:taylor_nabla_h_beta}  and \eqref{eq:cocoercivity_nabla_h_beta} are classical  for convex functions with Lipschitz gradient \cite{Nesterov2004}.

\indent$\mathrm{(d)}$
Let us denote here $\hat{\zb}^{*}_{\beta} := \zb^{*}_{\beta}(\hat{\zb}; \dot{\zb})$. Then, we can derive
\begin{align*}
h_{\beta}(\hat{\zb};\dot{\zb}) + \iprods{\nabla{h}_{\beta}(\hat{\zb};\dot{\zb}), \zb  - \hat{\zb}} &= \big( \iprods{\hat{\zb}, \hat{\zb}^{\ast}_{\beta}} - h^{\ast}(\hat{\zb}^{\ast}_{\beta}) - \beta b_{\Zc}(\hat{\zb}^{\ast}_{\beta},\dot{\zb}) \big) + \iprods{\hat{\zb}^{\ast}_{\beta}, \zb -\hat{\zb}} \\
& = \iprods{\zb, \hat{\zb}^{\ast}_{\beta}} - h^{\ast}(\hat{\zb}^{\ast}_{\beta}) - \beta b_{\Zc}(\hat{\zb}^{\ast}_{\beta}, \dot{\zb}) \\
& \leq  \max_{\ub \in \Zc}\set{\iprods{\zb , \ub} - h^{\ast}(\ub)} - \beta b_{\Zc}(\zb^{\ast}_{\beta}, \dot{\zb}) \\
& = h(\zb) - \beta b_{\Zc}(\nabla{h}_{\beta}(\hat{\zb};\dot{\zb}), \dot{\zb}).
\end{align*}

\indent$\mathrm{(e)}$
The elementary equality $\norm{(1-\tau) \ab + \tau \cb}^2 = (1-\tau) \norm{\ab}^2 + \tau \norm{\cb}^2 - \tau (1-\tau) \norm{\ab-\cb}^2$ directly implies the result for any norm $\norm{\cdot}$ deriving from a scalar product.

\indent$\mathrm{(f)}$
Let us denote by $\zb^{\ast}_{\beta,1} = \zb^{*}_{\beta}(\zb; \dot{\zb}_1)$ and $\zb^{\ast}_{\beta,2} := \zb^{\ast}_{\beta}(\zb;\dot{\zb}_2)$. 
Using the definition of $h_{\beta}$ in \eqref{eq:h_beta} and its optimality condition, we can derive
\begin{align*}
h_{\beta}(\zb; \dot{\zb}_2) &= \max_{\hat{\zb} \in \Zc} \set{\iprods{\zb, \hat{\zb}} - h^{\ast}(\hat{\zb}) - \beta b_{\Zc}(\hat{\zb}, \dot{\zb}_2)} 
= \iprods{\zb, \zb^{\ast}_{\beta,2}}  - h^{\ast}(\zb^{\ast}_{\beta,2}) -\beta b_{\Zc}(\zb^{\ast}_{\beta,2}, \dot{\zb}_2) \\
& = \Big( \iprods{\zb,\zb^{\ast}_{\beta,2}} - h^{\ast}(\zb^{\ast}_{\beta,2})  - \beta b_{\Zc}(\zb^{\ast}_{\beta,2}, \dot{\zb}_1) \Big) + \beta b_{\Zc}(\zb^{\ast}_{\beta,2}, \dot{\zb}_1) - \beta b_{\Zc}(\zb^{\ast}_{\beta,2}, \dot{\zb}_2) \\
&\leq{\!}  \iprods{\zb, \zb^{\ast}_{\beta,1}} \!-\! h^{\ast}(\zb^{\ast}_{\beta,1})  \!-\! \beta b_{\Zc}(\zb^{\ast}_{\beta,1}, \dot{\zb}_1)
\!-\! \frac{\beta}{2}\norm{\zb^{\ast}_{\beta,1} \!-\! \zb^{\ast}_{\beta,2}}^2_{\Yc} \!+\! \beta b_{\Zc}(\zb^{\ast}_{\beta,2}, \dot{\zb}_1) \!-\! \beta b_{\Zc}(\zb^{\ast}_{\beta,2}, \dot{\zb}_2) \\
& = h_{\beta}(\zb; \dot{\zb}_1) - \frac{\beta}{2}\norm{\zb^{\ast}_{\beta,1} -\zb^{\ast}_{\beta,2}}^2_{\Yc}   + \beta \left(b_{\Zc}(\zb^{\ast}_{\beta,2}, \dot{\zb}_1) -  b_{\Zc}(\zb^{\ast}_{\beta,2}, \dot{\zb}_2)\right),
\end{align*}
which proves \eqref{eq:move_ydot}.
\end{proof}

%%%%  A.1. The proof of Lemma 2.
\subsection{The proof of Lemma \ref{le:excessive_gap_aug_Lag_func}: Key bounds for approximate solutions}\label{apdx:le:excessive_gap_aug_Lag_func}

% Proof of Lemma 3.6.
We consider the smooth objective residual $S_{\beta}(\xb;\dot{\yb}) := \big(f(\xb) + g_\beta(\Ab\xb; \dot{\yb})\big) - \big(  f(\xopt)  + g(\Ab\xopt) \big)$. 
By using the definition of $g_{\beta}$, we can derive that
\begin{align}\label{eq:hbeta2hstar}
g_\beta(\Ab\xb; \dot{\yb}) &= \max_{\hat{\yb} \in \Yc} \set{ \iprods{\Ab\xb, \hat{\yb}} - g^{\ast}(\hat{\yb}) - \beta b_{\Yc}(\hat{\yb}, \dot{\yb})} \notag \\
&\geq \iprods{\Ab\xb , \yopt} - g^{*}(\yopt) - \beta b_{\Yc}(\yopt, \dot{\yb}) \notag\\
& = \iprods{\Ab\xb - \Ab \xopt, \yopt} + \iprods{\Ab\xopt , \yopt} - g^*(\yopt) - \beta b_{\Yc}(\yopt, \dot{\yb}) \notag \\
& = \iprods{\Ab(\xb - \xopt), \yopt} + g(\Ab \xopt) - \beta b_{\Yc}(\yopt, \dot{\yb}),
\end{align}
where the last line is the equality case in the Fenchel-Young inequality using the fact that $\Ab\xopt \in \partial{g}^{*}(\yopt)$.
Similarly, we have 
\begin{align}\label{eq:f_ast_gamma_est}
f^{\ast}_\gamma(-\Ab^\top\yb; \dot{\xb}) &= \max_{\hat{\xb} \in \Xc}\set{ \iprods{ -\Ab^\top\yb, \hat{\xb}} - f(\hat{\xb}) - \gamma b_{\Xc}(\hat{\xb}, \dot{\xb})} \notag\\
&\geq \iprods{\Ab^\top(\yopt - \yb), \xopt} + f^{\ast}(-\Ab^\top \yopt) - \gamma b_{\Xc}(\xopt, \dot{\xb}).
\end{align}
Combining \eqref{eq:f_ast_gamma_est}, the definition \eqref{eq:smoothed_gap_func} of $G_{\gamma\beta}(\cdot;\dot{\wb})$, and the strong duality condition \eqref{eq:strong_duality}, we can show that
\begin{align}\label{eq:GandS}
G_{\gamma\beta}(\wb;\dot{\wb}) &:= P_{\beta}(\xb;\dot{\yb}) - D_{\gamma}(\yb;\dot{\xb}) \notag\\
& = f(\xb) + g_{\beta}(\Ab\xb; \dot{\yb}) + f^{*}_{\gamma}(-\Ab^{\top}\yb; \dot{\xb}) + g^{*}(\yb) \notag \\
&\overset{\tiny\eqref{eq:strong_duality}}{=} S_{\beta}(\xb;\dot{\yb}) + f^{\ast}_{\gamma}(-\Ab^\top\yb; \dot{\xb}) + g^{\ast}(\yb) - f^{*}(-\Ab^\top\yopt) - g^{*}(\yopt)\notag \\
& \overset{\tiny\eqref{eq:f_ast_gamma_est}}{\geq} S_{\beta}(\xb;\dot{\yb}) + \iprods{\Ab^\top(\yopt - \yb), \xopt} + g^{*}(\yb) - g^{*}(\yopt) - \gamma b_{\Xc}(\xopt, \dot{\xb}) \notag \\
& \geq S_{\beta}(\xb;\dot{\yb}) -  \gamma b_{\Xc}(\xopt, \dot{\xb}), 
\end{align}
where the last inequality holds because $g^{*}$ is convex and $\Ab \xopt \in \partial{g}^*(\yopt)$ due to \eqref{eq:opt_cond}. This proves the first inequality of \eqref{eq:S_bound}.

Since $b_{\Yc}(\cdot, \dot{\yb})$ is $1$-strongly convex with respect to the weighted-norm, using the optimality condition of the maximization problem in \eqref{eq:g_beta} at $\yb := \yopt$, and $\ub := \Ab\xb$, we obtain
\begin{equation}\label{eq:opt_cond_max2}
g_{\beta}(\Ab\xb;\dot{\yb}) \geq \iprods{\Ab\xb,\yopt} - g^{\ast}(\yopt) - \beta b_{\Yc}(\yopt, \dot{\yb}) + \frac{\beta}{2}\snorm{\yb^{\ast}_{\beta}(\Ab\xb;\dot{\yb}) - \yopt}_{\Yc}^2. 
\end{equation}
By \eqref{eq:opt_cond}, we have $-\Ab^{\top}\yopt\in\partial{f}(\xopt)$. Using this and the convexity of $f$, we have  $f(\xb) \geq f(\xopt) - \iprods{\Ab(\xb - \xopt),\yopt}$. 
Summing up the last inequality and \eqref{eq:opt_cond_max2}, then using the definition of $S_{\beta}(\xb;\dot{\yb})$, we obtain
\begin{align*}
 \frac{\beta}{2}\snorm{\yb^{\ast}_{\beta}(\Ab\xb;\dot{\yb}) \!-\! \yopt}_{\Yc}^2 \leq \beta b_{\Yc}(\yopt, \dot{\yb}) \!+\! S_{\beta}(\xb;\dot{\yb}) \!+\! g(\Ab\xopt) \!+\! g^{\ast}(\yopt) \!-\! \iprods{\Ab\xopt,\yopt} \leq \beta b_{\Yc}(\yopt, \dot{\yb}) + S_{\beta}(\xb;\dot{\yb}),
\end{align*}
which implies the second estimate in  \eqref{eq:S_bound}, where the last inequality is due to the Fenchel-Young equality $g(\Ab\xopt) + g^{\ast}(\yopt) = \iprods{\Ab\xb^{\star}, \yb^{\star}}$, and $\Ab\xopt\in\partial{g^{\ast}}(\yopt)$.

Now, we consider the choice $g(\cdot) := \delta_{\set{\cb}}(\cdot)$ in the constrained setting \eqref{eq:constr_cvx}.
Under Assumption A.\ref{as:A1}, any $\wopt := (\xopt, \yopt) \in\Wopt$ is a saddle point of the Lagrange function $\mathcal{L}(\xb, \yb) := f(\xb) + \iprod{\Ab\xb - \cb, \yb}$, i.e., $\Lc(\xb^{\star},\yb) \leq \Lc(\xb^{\star}, \yopt) \leq \Lc(\xb, \yopt)$ for all $\xb\in \Xc$ and $\yb\in\R^m$. The dual function $D$ in \eqref{eq:dual_cvx} becomes $D(\yb) := -f^{\ast}(-\Ab^{\top}\yb) - \cb^{\top}\yb = \min_{\xb}\set{f(\xb) + \iprods{\Ab\xb - \cb,\yb}}$.
It leads to $D(\yb) \leq D(\yb^{\star}) = f(\xopt) \leq f(\xb) + \iprods{\yopt, \Ab\xb - \cb}$, and hence
\vspace{-0.75ex}
\begin{align}\label{eq:pd_lower_bound}
f(\xb)  -  D(\yb) \geq f(\xb) - f(\xopt) \geq \iprod{\cb - \Ab\xb, \yopt} \geq -\norm{\yopt}_{\Yc}\norm{\Ab\xb - \cb}_{\Yc, *},
\vspace{-0.75ex}
\end{align}
for all $(\xb, \yb)\in \Wc$, which proves \eqref{eq:lower_bound}.

Finally, we prove \eqref{eq:main_bound_gap_3}. 
Indeed, using the definition of $g$ and $g_{\beta}$, and $\Ab\xopt = \cb$, we can write
\begin{align*}
f(\xb) - f(\xopt) &= f(\xb) + g_\beta(\Ab\xb; \dot{\yb}) - f(\xopt) - g(\Ab\xopt)  - g_\beta(\Ab\xb; \dot{\yb})  + g(\Ab\xopt) \\ 
&= S_{\beta}(\xb;\dot{\yb}) - g_\beta(\Ab\xb; \dot{\yb})  + g(\Ab\xopt) \overset{\tiny\eqref{eq:hbeta2hstar}}{\leq} S_{\beta}(\xb;\dot{\yb}) - \iprods{\Ab(\xb - \xopt) , \yopt} + \beta b_{\Yc}(\yopt, \dot{\yb}) \\
& \overset{\tiny\eqref{eq:GandS}}{\leq} G_{\gamma\beta}(\wb;\dot{\wb}) + \iprods{\cb - \Ab\xb, \yopt} + \beta b_{\Yc}(\yopt, \dot{\yb}) + \gamma b_{\Xc}(\xopt, \dot{\xb}).
\end{align*}
We then use the second inequality of \eqref{eq:pd_lower_bound} to get
\begin{align}\label{eq:bound_f}
\iprods{ \yopt, \cb - \Ab\xb} &\leq f(\xb) - f(\xopt) =  S_{\beta}(\xb;\dot{\yb}) - g_\beta(\Ab\xb; \dot{\yb})  + g(\Ab\xopt) = S_{\beta}(\xb;\dot{\yb}) - g_\beta(\Ab\xb; \dot{\yb}),
\end{align}
where $g(\Ab\xopt) = 0$ due to the feasibility of $\xopt$, i.e., $\Ab\xopt = \cb$. 
Now, it is obvious that
\begin{equation*}
g_\beta(\Ab\xb; \dot{\yb}) := \sup_{\hat{\yb} \in \Yc} \set{ \iprods{\Ab\xb - \cb, \hat{\yb}} - \beta b_{\Yc}(\hat{\yb}, \dot{\yb}) }\geq \iprods{\Ab\xb - \cb, \yopt} - \beta b_{\Yc}(\yopt, \dot{\yb}).
\end{equation*}
Hence, combining this estimate and \eqref{eq:bound_f} we obtain the first inequality in \eqref{eq:main_bound_gap_3}. 

As $\nabla{b}_{\Yc}(\cdot, \dot{\yb})$ is $L_{b_{\Yc}}$-Lipschitz continuous, $b_{\Yc}(\dot{\yb}, \dot{\yb}) = 0$ and $\nabla{b}_{\Yc}(\dot{\yb}, \dot{\yb}) = 0$, we have 
\vspace{-0.75ex}
\begin{align*}
g_\beta(\Ab\xb; \dot{\yb}) & = \sup_{\hat{\yb} \in \Yc} \set{ \iprods{\Ab\xb - \cb, \hat{\yb}} - \beta b_{\Yc}(\hat{\yb}, \dot{\yb}) } \geq \sup_{\hat{\yb} \in \Yc} \set{ \iprods{\Ab\xb - \cb, \hat{\yb}} - \frac{\beta L_{b_{\Yc}}}{2} \norm{ \hat{\yb} - \dot{\yb}}_{\Yc}^2} \\
& = \frac{1}{2\beta L_{b_{\Yc}}} \norm{\Ab\xb - \cb}_{\Yc, \ast}^2 + \iprods{\dot{\yb}, \Ab\xb - \cb}. 
\end{align*}
The last equality comes from the formula of the Fenchel conjugate of the squared norm.
Combining this inequality and \eqref{eq:bound_f}, we obtain
\begin{align*} 
\iprods{\yopt, \cb - \Ab x} &\leq  S_{\beta}(\xb;\dot{\yb}) - \frac{1}{2L_{b_y}\beta}\norm{\Ab x - \cb}^2_{\Yc, *} - \iprods{\dot y, \Ab x - \cb}
\end{align*}
Rearranging this expression and using the Cauchy-Schwarz inequality, we obtain $-\norm{\yopt - \dot{\yb}}_{\Yc}\norm{\Ab\xb - \cb}_{\Yc, *} \leq S_{\beta}(\xb;\dot{\yb}) - (2L_{b_{\Yc}}\beta)^{-1}\norm{\Ab\xb - \cb}^2_{\Yc,*}$, which leads to
\begin{align*}
\norm{\Ab\xb - \cb}^2_{\Yc, *} - 2\beta L_{b_{\Yc}}\norm{\yopt - \dot{\yb}}_{\Yc}\norm{\Ab\xb - \cb}_{\Yc, *} - 2L_{b_{\Yc}}\beta S_{\beta}(\xb;\dot{\yb}) \leq 0.
\end{align*}
Let $t := \Vert\Ab\xb - \cb\Vert_{\Yc, *}$. 
The last inequality becomes $t^2 - 2\beta L_{b_{\Yc}} \norm{\yopt - \dot{\yb}}_{\Yc}t - 2L_{b_{\Yc}}\beta S_{\beta}(\xb;\dot{\yb}) \leq 0$.
This inequation in $t$ has solution. Hence, $\norm{\yopt - \dot{\yb}}_{\Yc}^2 + 2L_{b_{\Yc}}^{-1}\beta^{-1}S_{\beta}(\xb;\dot{\yb}) \geq 0$ and
\begin{equation*}
t := \norm{\Ab\xb - \cb}_{\Yc,*} \leq \beta L_{b_{\Yc}}\Big[ \norm{\yopt - \dot{\yb}}_{\Yc}  + \big(\norm{\yopt - \dot{\yb}}_{*}^2 + 2L_{b_{\Yc}}^{-1}\beta^{-1}S_{\beta}(\xb;\dot{\yb}) \big)^{1/2}\Big],
\end{equation*}
which is the second estimate of \eqref{eq:main_bound_gap_3}. 
\Eproof
% End of the proof.

%%%% A.3. The convergence analysis of the first primal-dual algorithm. 
\vspace{-2ex}
\subsection{The convergence analysis of the \ref{eq:ac_pd_scheme} method}\label{apdx:subsec:convergence_analysis1}
In this appendix, we provide the full convergence analysis of \ref{eq:ac_pd_scheme}.
First, we prove a key inequality to guarantee the optimality gap reduction condition.

%% Lemma A.2.
\begin{lemma}\label{le:key_estimate_1a}
Let us define $S_{\beta_k}(\bar{\xb}^k;\dot{\yb}) := P_{\beta_k}(\xbar^k;\dot{\yb}) - P^{\ast} = f(\bar{\xb}^{k}) + g_{\beta_{k}}(\Ab\bar{\xb}^{k}; \dot{\yb}) - f(\xopt) - g(\Ab\xopt)$. 
If $\tau_k \in (0, 1]$, then 
\begin{align}\label{eq:key_estimate_1a}
S_{\beta_{k+1}}(\bar{\xb}^{k+1};\dot{\yb}) &+ \frac{\bar{L}_\Ab \tau_k^2}{2\beta_{k+1}} \norm{\xtilde^{k+1} - \xopt}_{\Xc}^2  \leq (1-\tau_k) S_{\beta_k}(\bar{\xb}^k;\dot{\yb})  + \frac{\bar{L}_\Ab \tau_k^2}{2\beta_{k+1}} \norm{\xtilde^{k} - \xopt}_{\Xc}^2 \notag\\
& + \frac{(1-\tau_k)}{2}\left[ (\beta_k - \beta_{k+1})L_{b_{\Yc}}- \beta_{k+1}\tau_k\right] \big\Vert\nabla{g}_{\beta_{k+1}}(\Ab\bar{\xb}^k; \dot{\yb}) - \dot{\yb}\big\Vert^2_{\Yc}.
 \end{align}
\end{lemma}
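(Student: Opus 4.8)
The plan is to read the primal update of \eqref{eq:ac_pd_scheme} as one accelerated proximal-gradient step applied to $P_{\beta_{k+1}}(\cdot)=f(\cdot)+g_{\beta_{k+1}}(\Ab\,\cdot\,;\dot{\yb})$, whose smooth part $\xb\mapsto g_{\beta_{k+1}}(\Ab\xb;\dot{\yb})$ has $\tfrac{\bar{L}_{\Ab}}{\beta_{k+1}}$-Lipschitz gradient $\Ab^{\top}\yast_{\beta_{k+1}}(\Ab\xb;\dot{\yb})$, by Lemma~\ref{lem:move_beta}(c) composed with $\Ab$ and the definition of $\bar{L}_{\Ab}$. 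Writing the optimality condition of the proximal step defining $\xbar^{k+1}$ produces a subgradient $s^{k+1}\in\partial f(\xbar^{k+1})$ with $s^{k+1}+\Ab^{\top}\yast_{\beta_{k+1}}(\Ab\xhat^k;\dot{\yb})=\tfrac{\bar{L}_{\Ab}}{\beta_{k+1}}(\xhat^k-\xbar^{k+1})$; this identity is what will cancel every first-order term. I would also record the two affine identities forced by the scheme, namely $\xhat^k-\ub^k=\tau_k(\xtilde^k-\xopt)$ and $\xbar^{k+1}-\ub^k=\tau_k(\xtilde^{k+1}-\xopt)$ where $\ub^k:=(1-\tau_k)\xbar^k+\tau_k\xopt$, which follow from $\xhat^k=(1-\tau_k)\xbar^k+\tau_k\xtilde^k$ and $\xbar^{k+1}=\xhat^k-\tau_k(\xtilde^k-\xtilde^{k+1})$.

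\textbf{Core one-step estimate.} First I would bound $g_{\beta_{k+1}}(\Ab\xbar^{k+1};\dot{\yb})$ above by its linearization at $\xhat^k$ plus $\tfrac{\bar{L}_{\Ab}}{2\beta_{k+1}}\norm{\xbar^{k+1}-\xhat^k}_{\Xc}^2$ via the descent inequality \eqref{eq:taylor_nabla_h_beta}. Next I would lower-bound that linearization at the combination $\ub^k$: at $\xbar^k$ (weight $1-\tau_k$) I would use the cocoercivity \eqref{eq:cocoercivity_nabla_h_beta}, which donates the negative term $-\tfrac{\beta_{k+1}}{2}\norm{\yast_{\beta_{k+1}}(\Ab\xhat^k;\dot{\yb})-\yast_{\beta_{k+1}}(\Ab\xbar^k;\dot{\yb})}_{\Yc}^2$; at $\xopt$ (weight $\tau_k$) I would use the return-to-zero inequality \eqref{eq:magic_return_to_zero}, which replaces the smoothed value by the exact $g(\Ab\xopt)$ and donates $-\tau_k\beta_{k+1}b_{\Yc}(\yast_{\beta_{k+1}}(\Ab\xhat^k;\dot{\yb}),\dot{\yb})$. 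Adding convexity of $f$ at $\xbar^k$ and $\xopt$ and substituting the proximal optimality identity to annihilate the inner products collapses the cross terms, via the three-point (Pythagoras) identity, into $\tfrac{\bar{L}_{\Ab}}{2\beta_{k+1}}\big(\norm{\xhat^k-\ub^k}_{\Xc}^2-\norm{\xbar^{k+1}-\ub^k}_{\Xc}^2\big)$; the affine identities turn this into $\tfrac{\bar{L}_{\Ab}\tau_k^2}{2\beta_{k+1}}\big(\norm{\xtilde^k-\xopt}_{\Xc}^2-\norm{\xtilde^{k+1}-\xopt}_{\Xc}^2\big)$, and the elementary bound $g_{\beta_{k+1}}(\Ab\xopt;\dot{\yb})\le g(\Ab\xopt)$ caps the $\tau_k$-weighted optimum by $\tau_k\Popt$.

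\textbf{Parameter change and collecting the dual quadratics.} The $(1-\tau_k)$-weighted value here is $(1-\tau_k)P_{\beta_{k+1}}(\xbar^k)$, carrying the \emph{new} parameter $\beta_{k+1}$, whereas $S_k$ carries $\beta_k$. I would bridge this with Lemma~\ref{lem:move_beta}(b), i.e.\ \eqref{eq:convexity_h_beta}, giving $g_{\beta_{k+1}}(\Ab\xbar^k;\dot{\yb})\le g_{\beta_k}(\Ab\xbar^k;\dot{\yb})+(\beta_k-\beta_{k+1})b_{\Yc}(\yast_{\beta_{k+1}}(\Ab\xbar^k;\dot{\yb}),\dot{\yb})$, which reconstitutes $(1-\tau_k)S_k$ plus a positive multiple of $b_{\Yc}(\yast_{\beta_{k+1}}(\Ab\xbar^k;\dot{\yb}),\dot{\yb})$; the Lipschitz gradient of $b_{\Yc}(\cdot,\dot{\yb})$ together with $b_{\Yc}(\dot{\yb},\dot{\yb})=0$ bounds this by $\tfrac{L_{b_{\Yc}}}{2}\norm{\yast_{\beta_{k+1}}(\Ab\xbar^k;\dot{\yb})-\dot{\yb}}_{\Yc}^2$. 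The final task is to fuse the three leftover dual quadratics — the cocoercivity term at $(\xhat^k,\xbar^k)$, the return-to-zero term at $\xhat^k$, and this $\beta$-change term at $\xbar^k$ — into the single coefficient $(\beta_k-\beta_{k+1})L_{b_{\Yc}}-\beta_{k+1}\tau_k$ multiplying $\norm{\nabla g_{\beta_{k+1}}(\Ab\xbar^k;\dot{\yb})-\dot{\yb}}_{\Yc}^2$. This is exactly where \eqref{eq:combine_tau} enters: it converts $(1-\tau_k)\norm{\nabla g_{\beta_{k+1}}(\Ab\xhat^k)-\nabla g_{\beta_{k+1}}(\Ab\xbar^k)}_{\Yc}^2$ plus $\tau_k\norm{\nabla g_{\beta_{k+1}}(\Ab\xhat^k)-\dot{\yb}}_{\Yc}^2$ into a negative term in $\norm{\nabla g_{\beta_{k+1}}(\Ab\xbar^k)-\dot{\yb}}_{\Yc}^2$, while the stray $\norm{\nabla g_{\beta_{k+1}}(\Ab\xhat^k)-\dot{\yb}}_{\Yc}^2$ is exactly cancelled by the return-to-zero contribution.

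\textbf{Main obstacle.} The descent/convexity bookkeeping is routine; the delicate part is the last paragraph, namely choosing the precise pairing of points in \eqref{eq:cocoercivity_nabla_h_beta} and the weight in \eqref{eq:combine_tau} so that every dual quadratic except $\norm{\nabla g_{\beta_{k+1}}(\Ab\xbar^k;\dot{\yb})-\dot{\yb}}_{\Yc}^2$ cancels, and then checking the sign of the resulting coefficient. This assembly is precisely the mechanism that, under the update $\beta_{k+1}=\beta_k/(1+\tau_k/L_{b_{\Yc}})$, drives $(\beta_k-\beta_{k+1})L_{b_{\Yc}}-\beta_{k+1}\tau_k$ to zero and hence yields the clean telescoping estimate used in Lemma~\ref{lem:choose_parameters_asgard}.
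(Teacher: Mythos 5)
Your proposal is correct and follows essentially the same route as the paper's proof: the same descent estimate \eqref{eq:taylor_nabla_h_beta} at $\xhat^k$ combined with the prox three-point inequality tested at $(1-\tau_k)\xbar^k + \tau_k\xopt$, cocoercivity \eqref{eq:cocoercivity_nabla_h_beta} at $\xbar^k$ and the return-to-zero inequality \eqref{eq:magic_return_to_zero} at $\xopt$, the $\beta$-switch via \eqref{eq:convexity_h_beta}, and finally \eqref{eq:combine_tau} plus the $L_{b_{\Yc}}$-Lipschitz bound on $\nabla b_{\Yc}$ to assemble the coefficient $(\beta_k-\beta_{k+1})L_{b_{\Yc}}-\beta_{k+1}\tau_k$. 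The cancellation mechanism you describe (the return-to-zero term supplying exactly the $\tau_k$-weighted quadratic needed in \eqref{eq:combine_tau}) is precisely how the paper's derivation closes.
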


%%% The proof of Lemma A.2.
\begin{proof}
Using Lemma~\ref{lem:move_beta} with $h := g$, $h_{\beta} := g_{\beta}$, $\Zc := \Yc$, and $\zb := \Ab\xb$, we can  proceed as
\begin{align}\label{eq:primal_comparison}
f(\bar x^{k+1}) &+ g_{\beta_{k+1}}(\Ab\xbar^{k+1}; \dot{\yb}) \overset{\eqref{eq:taylor_nabla_h_beta}}{\leq} f(\bar x^{k+1}) + g_{\beta_{k+1}}(\Ab\hat{\xb}^{k}; \dot{\yb}) + \iprods{\nabla{g}_{\beta_{k+1}}(\Ab\hat{\xb}^k; \dot{\yb}), \Ab \xbar^{k+1} - \Ab\hat{\xb}^k} \notag\\
&  + \frac{1}{2\beta_{k+1}}\norm{\Ab\hat{\xb}^k - \Ab\bar{\xb}^{k+1}}^2_{\Yc, *} \notag \\
% % % % % % % % % % % 
&\hspace{-0.7em}\overset{\nabla{g}_{\beta} = \yb^*_\beta}{\leq} f(\bar{\xb}^{k+1}) + g_{\beta_{k+1}}(\Ab \hat{\xb}^{k}; \dot{\yb}) + \iprods{\Ab^{\top}\yb^{\ast}_{\beta_{k+1}}(\Ab\hat{\xb}^k; \dot{\yb}), \xbar^{k+1} - \xhat^k }  + \frac{\bar{L}_\Ab}{2 \beta_{k+1}}\norm{\xhat^k - \xbar^{k+1}}^2_{\Xc} \\
% % % % % % % % % % %
&{\!\!\!} \hspace{-1.2em}\overset{\tiny\text{def. of }~\xbar_{k\!+\!1}}{\leq}{\!\!\!\!\!} f(\xb) \!+\! g_{\beta_{k\!+\!1}}(\Ab \xhat^{k}; \dot{\yb}) + \iprods{\Ab^{\top}\yb^{\ast}_{\beta_{k\!+\!1}}(\Ab\xhat^k; \dot{\yb}), \xb \!-\! \xhat^k}  + \frac{\bar{L}_\Ab}{2 \beta_{k\!+\!1}}\left[\norm{\xhat^k \!-\! \xb}^2_{\Xc}  - \norm{\xbar^{k\!+\!1} \!-\! \xb}^2_{\Xc}\right], \notag
\end{align}
where the last inequality comes from the definition of $\xbar^{k+1}$ by using its optimality condition and the functions value at $\xb \in \Xc$. 

Our next step is to choose $\xb := (1-\tau_k)\xbar^k + \tau_k\xopt$. In this case, we have 
\begin{equation*}
\begin{array}{lll}
\xb - \hat{\xb}^k  &= (1-\tau_k)\bar{\xb}^k + \tau_k \xopt - (1-\tau_k)\bar{\xb}^k - \tau_k \tilde{\xb}^k &= \tau_k (\xopt - \tilde{\xb}^k), \vspace{0.75ex}\\
\xb - \hat{\xb}^k  &= (1-\tau_k)\xbar^k + \tau_k\xopt - (1-\tau_k)\xhat^k - \tau_k\xhat^k &= (1-\tau_k)(\bar{\xb}^k - \hat{\xb}^k) + \tau_k (\xopt - \hat{\xb}^k), \vspace{0.75ex}\\ 
\xb - \xbar^{k+1} &= (1-\tau_k)\xbar^k + \tau_k\xopt - \hat{\xb}^k - \tau_k(\tilde{\xb}_{k+1} - \tilde{\xb}_k) &= \tau_k (\xopt - \tilde{\xb}^{k+1}).
\end{array}
\end{equation*}
Now, we plug these expressions into \eqref{eq:primal_comparison} and using the convexity of $f$, we can derive
\begin{align*}
f(\xbar^{k+1}) &+ g_{\beta_{k+1}}(\Ab\xbar^{k+1}; \dot{\yb})   \leq (1-\tau_k)f(\xbar^k) + \tau_kf(\xopt) + g_{\beta_{k+1}}(\Ab\xhat^{k}; \dot{\yb}) \nonumber\\
&\qquad \qquad + \tau_k \iprods{\Ab^{\top}\yb^{\ast}_{\beta_{k+1}}(\Ab\xhat^k; \dot{\yb}), \xopt - \xhat^k}  + (1-\tau_k) \iprods{\Ab^{\top}\yb^{\ast}_{\beta_{k+1}}(\Ab\xhat^k; \dot{\yb}), \xbar^k - \xhat^k} \nonumber\\
&\qquad \qquad + \frac{\bar{L}_\Ab \tau_k^2}{2 \beta_{k+1}}\norm{\tilde{\xb}^k - \xopt}^2_{\Xc}  - \frac{\bar{L}_\Ab \tau_k^2}{2 \beta_{k+1}}\norm{\tilde{\xb}^{k+1} - \xopt}^2_{\Xc} \\
% % % % % % % % % % % % % % %
&\hspace{-1em}\overset{\tiny\eqref{eq:cocoercivity_nabla_h_beta}+\eqref{eq:magic_return_to_zero}}{\leq} 
(1-\tau_k)f(\xbar^k) + \tau_k f(\xopt) + \tau_k g(\Ab\xopt) - \tau_k \beta_{k+1} b_{\Yc}(\nabla{g}_{\beta_{k+1}}(\Ab\xhat^k; \dot{\yb}), \dot{\yb}) \nonumber\\
& \qquad \qquad + (1-\tau_k)g_{\beta_{k+1}}(\Ab\xbar^k; \dot{\yb}) - (1-\tau_k)\frac{\beta_{k+1}}{2}\norm{\nabla{g}_{\beta_{k+1}}(\Ab\xhat^k; \dot{\yb}) -\nabla{g}_{\beta_{k+1}}(\Ab\xbar^k; \dot{\yb}) }^2_{\Yc} \nonumber\\
& \qquad \qquad  + \frac{\bar{L}_\Ab \tau_k^2}{2 \beta_{k+1}}\norm{\tilde{\xb}^k - \xopt}^2_{\Xc}  - \frac{\bar{L}_\Ab \tau_k^2}{2 \beta_{k+1}}\norm{\tilde{\xb}^{k+1} - \xopt}^2_{\Xc}\\
&\overset{\tiny\eqref{eq:convexity_h_beta}}{\leq} (1-\tau_k)f(\xbar^k) + \tau_k f(\xopt) + \tau_k g(\Ab\xopt) + (1-\tau_k)g_{\beta_{k}}(\Ab\xbar^k; \dot{\yb})  \nonumber\\
& \qquad \qquad - \frac{\tau_k \beta_{k+1}}{2} \norm{\nabla{g}_{\beta_{k+1}}(\Ab\xhat^k; \dot{\yb}) - \dot{\yb}}^2_{\Yc} + (1-\tau_k)(\beta_k - \beta_{k+1}) b_{\Yc}(\nabla{g}_{\beta_{k+1}}(\Ab\xbar^k; \dot{\yb}), \dot{\yb}) \\
& \qquad \qquad- (1-\tau_k)\frac{\beta_{k+1}}{2}\norm{\nabla{g}_{\beta_{k+1}}(\Ab\xhat^k; \dot{\yb}) -\nabla{g}_{\beta_{k+1}}(\Ab\xbar^k; \dot{\yb}) }^2_{\Yc} \\
& \qquad \qquad  + \frac{\bar{L}_\Ab \tau_k^2}{2 \beta_{k+1}}\norm{\tilde{\xb}^k - \xopt}^2_{\Xc}  - \frac{\bar{L}_{\Ab} \tau_k^2}{2 \beta_{k+1}}\norm{\tilde{\xb}^{k+1} - \xopt}^2_{\Xc}.
% % % % % % % % % % % % % % %
\end{align*}
By using \eqref{eq:combine_tau} from Lemma~\ref{lem:move_beta}, we can further estimate this inequality as
\begin{align*}
f(\xbar^{k+1}) + g_{\beta_{k+1}}(\Ab\xbar^{k+1}; \dot{\yb}) & \overset{\tiny\eqref{eq:combine_tau}}{\leq} (1-\tau_k)f(\xbar^k) + (1-\tau_k)g_{\beta_{k}}(\Ab\xbar^k; \dot{\yb}) + \tau_k f(\xopt) + \tau_k g(\Ab\xopt) \nonumber\\
&+ (\beta_k - \beta_{k+1})(1-\tau_k)b_{\Yc}(\nabla{g}_{\beta_{k+1}}(\Ab\xbar^k; \dot{\yb}), \dot{\yb}) \nonumber\\
& - \frac{\beta_{k+1}}{2}\tau_k(1-\tau_k) \norm{\nabla{g}_{\beta_{k+1}}(\Ab\xbar^k; \dot{\yb}) - \dot{\yb}}^2_{\Yc} \nonumber\\
&  + \frac{\bar{L}_{\Ab} \tau_k^2}{2 \beta_{k+1}}\norm{\tilde{\xb}^k - \xopt}^2_{\Xc}  - \frac{\bar{L}_{\Ab} \tau_k^2}{2 \beta_{k+1}}\norm{\tilde{\xb}^{k+1} - \xopt}^2_{\Xc}.
\end{align*}
Finally, using the $L_{b_{\Yc}}$-Lipschitz continuity of $\nabla{b}_{\Yc}$ in the weighted-norm $\norm{\cdot}_{\Yc}$ and the fact that $\nabla{b}_{\Yc}(\dot{\yb}, \dot{\yb}) = 0$, we obtain \eqref{eq:key_estimate_1a} from the last derivation.
\end{proof}

%% A.3.1. The proof of Lemma 3.1.: Small smooth primal optimality gap.
\subsubsection{The proof of Lemma~\ref{lem:choose_parameters_asgard}: Small smoothed primal optimality gap}\label{apdx:lem:choose_parameters_asgard}
Let us denote $S_{\beta_k}(\bar{\xb}^k; \dot{\yb}) := P_{\beta_{k+1}}(\xbar^k; \dot y) - \Popt = f(\xbar^{k}) + g_{\beta_{k}}(\Ab\xbar^{k}; \dot{\yb}) - f(\xopt) - g(\Ab\xopt)$. 
Using \eqref{eq:key_estimate_1a} from Lemma~\ref{le:key_estimate_1a}, we have
\begin{align}\label{eq:lm31_proof1}
S_{\beta_{k+1}}(\bar{\xb}^{k+1};\dot{\yb}) &+ \frac{\bar{L}_{\Ab} \tau_k^2}{2\beta_{k+1}} \norm{\xtilde^{k+1} - \xopt}^2_{\Xc}  \leq (1-\tau_k) S_{\beta_k}(\bar{\xb}^k;\dot{\yb})  + \frac{\bar{L}_{\Ab} \tau_k^2}{2\beta_{k+1}} \norm{\xtilde^{k} - \xopt}^2_{\Xc} \notag\\
& + \frac{(1-\tau_k)}{2}\big[ (\beta_k - \beta_{k+1})L_{b_{\Yc}} - \beta_{k+1}\tau_k\big] \norm{\nabla{g}_{\beta_{k+1}}(\Ab\xbar^k; \dot{\yb}) - \dot{\yb}}^2_{\Yc}.
\end{align}
In order to remove the last term in this estimate and to get a telescoping sum, we can impose the following conditions:
\begin{align}\label{eq:key_condition_1a}
(\beta_k - \beta_{k+1}) L_{b_{\Yc}} = \beta_{k+1} \tau_k ~~~\text{and}~~~ (1 - \tau_k)\frac{\beta_{k+1}}{\tau_k^2} = \frac{\beta_k}{\tau_{k-1}^2}.
\end{align}
By eliminating $\beta_k$ and $\beta_{k+1}$ from these equalities, we obtain $\tau_k^2(1+\tau_k/L_{b_{\Yc}}) = \tau_{k-1}^2(1-\tau_k)$.
Hence, we can compute $\tau_k$ by solving the cubic equation 
\begin{equation}\label{eq:cubic_eq}
p_3(\tau) := \tau^3/L_{b_{\Yc}} + \tau^2 + \tau_{k-1}^2 \tau - \tau_{k-1}^2 = 0.
\end{equation}
At the same time, we also obtain from \eqref{eq:key_condition_1a} an update rule $\beta_{k+1} := \frac{\beta_k}{1 + \frac{\tau_k}{L_{b_{\Yc}}}} < \beta_k$.

Now, we show that \eqref{eq:cubic_eq} has a unique positive solution $\tau_k\in (0, 1)$ for any $L_{b_{\Yc}} \geq 1$ and $\tau_{k-1}\in (0, 1]$.
We consider the cubic polynomial $p_3(\tau)$ defined by the left-hand side of \eqref{eq:cubic_eq}.
Clearly, for any $\tau > 0$, we have $p_3'(\tau) = 3\tau^2/L_{b_{\Yc}} + 2\tau + \tau_{k-1}^2 > 0$. Hence, $p_3(\cdot)$ is monotonically increasing on $(0, +\infty)$. 
In addition, since $p_3(0) = -\tau_{k-1}^2 < 0$ and $p_3(1) = 1/L_{b_{\Yc}} + 1 > 0$, the equation \eqref{eq:cubic_eq} has only one positive solution $\tau_k \in (0, 1)$.

Next, we show that $\tau_k \leq \frac{2}{k+2}$.
Indeed, by \eqref{eq:cubic_eq} we have $p_3(\tau) \geq \tau^2 +  \tau_{k-1}^2\tau - \tau_{k-1}^2 := p_2(\tau)$.
Since the unique positive root of $p_2(\tau) = 0$ is $\tilde{\tau}_k := \frac{\tau_{k-1}}{2}\left( \sqrt{ \tau_{k-1}^2 + 4} - \tau_{k-1}\right)$, we have $p_3(\tau) \geq p_2(\tilde{\tau}_k) = 0$ for $\tau \geq \tilde{\tau}_k$.
As $p_3(\tau)$ is monotonically increasing on $\R_{+}$, its positive solution $\tau_k$ must be in $(0, \tilde{\tau}_k]$. Hence, we have $\tau_k \leq \frac{\tau_{k-1}}{2}\left( \sqrt{ \tau_{k-1}^2 + 4} - \tau_{k-1}\right)$.
By induction, we can easily show that $\tau_k \leq \frac{2}{k+2}$.

We show by induction that $\tau_k \geq \frac{1}{k+1}$. 
First of all, by the choice of $\tau_0$, we have $\tau_0 = 1 \geq \frac{1}{0+1}$.
Suppose that $\tau_{k-1} \geq \frac{1}{k}$, we show that $\tau_k \geq \frac{1}{k+1}$. 
Assume by contradiction that $\tau_k < \frac{1}{k+1}$. Then, using \eqref{eq:key_condition_1a} we have
\begin{align*} 
\frac{1}{k^2} \leq \tau_{k-1}^2 = \tau_k^2 \frac{1 + \tau_k/L_{b_{\Yc}}}{1-\tau_k} < \frac{1}{(k+1)^2} \frac{1 + \frac{L_{b_{\Yc}}^{-1}}{k+1}}{1 - \frac{1}{k+1}} = \frac{1}{(k+1)^2} \frac{k + 1 + L_{b_{\Yc}}}{k}.
\end{align*}
This is equivalent to $(k+1)^2 < k (k+1+L_{b_{\Yc}})$, which contradicts the assumption that $L_{b_{\Yc}} = 1$ in Lemma~\ref{lem:choose_parameters_asgard}.
Hence, if $\tau_{k-1} \geq \frac{1}{k}$, then we have $\tau_{k} \geq \frac{1}{k+1}$. 
We have  $\frac{1}{k+1} \leq \tau_k \leq \frac{2}{k+2}$ for $k\geq 0$.

By the update rule $\beta_{k+1} := \frac{\beta_k}{1 + \frac{\tau_k}{L_{b_{\Yc}}}}$ of $\beta_k$, we can show that
\begin{equation*}
\beta_{k+1} = \frac{\beta_k}{1 + \tau_k/L_{b_{\Yc}}} \leq \beta_k \frac{k+1}{k+1 +L_{b_{\Yc}}^{-1}} \leq \beta_1 \prod_{l = 1}^k \frac{l+1}{l+1 +L_{b_{\Yc}}^{-1}} = \mathcal{O}\Big(\frac{1}{k^{1/L_{b_{\mathcal Y}}}}\Big)\underset{k \to \infty}{\longrightarrow} 0.
\end{equation*}
Clearly, if  $L_{b_{\Yc}} =1$, then $\beta_{k+1} = \frac{\beta_k}{1 + \tau_k} \leq \frac{k+1}{k+2}\beta_k \leq \frac{2\beta_1}{k+2}$ by induction.

Finally, we upper bound the ratio $\tau_k^2/\beta_{k+1}$ by using the second equality in \eqref{eq:key_condition_1a} as
\begin{equation*}
\frac{\tau_k^2}{\beta_{k+1}} = \frac{\tau_{k-1}^2}{\beta_{k}}(1-\tau_k) = \frac{\tau_{0}^2}{\beta_{1}}\prod_{l=1}^k(1-\tau_l) \leq \frac{\tau_{0}^2}{\beta_{1}}\prod_{l=1}^k (1-\frac{1}{l+1}) = \frac{\tau_{0}^2}{\beta_{1}}\prod_{l=1}^k \frac{l}{l+1} = \frac{\tau_{0}^2}{\beta_{1}(k+1)}.
\end{equation*}
Using these relations into \eqref{eq:lm31_proof1} and letting $S_k := S_{\beta_k}(\bar{\xb}^k;\dot{\yb})$, we obtain
\begin{align*}
\frac{\beta_{k\!+\!1} }{\tau_k^2} S_{k\!+\!1} + \frac{\bar{L}_{\Ab}}{2} \norm{\xtilde^{k\!+\!1} \!\!-\! \xopt}^2_{\Xc} \leq \frac{\beta_{k} }{\tau_{k\!-\!1}^2} S_k  + \frac{\bar{L}_{\Ab}}{2} \norm{\xtilde^{k} \!\!-\! \xopt}^2_{\Xc} \leq \frac{\beta_{0}(1 \!-\! \tau_0) }{\tau_{0}^2} S_0   \!+\! \frac{\bar{L}_\Ab}{2} \norm{\xtilde^{0} \!-\! \xopt}^2_{\Xc},
\end{align*}
we get \eqref{eq:key_est_a0} with noting that $\tau_0 = 1$, the bound on $\frac{\tau_k^2}{\beta_{k+1}}$ and $S_k := P_{\beta_{k}}(\xbar^k;\dot{\yb}) - \Popt$.
\Eproof

\subsection{The analysis of the update rule~\eqref{eq:update_tau_new}}\label{apdx:new_update_tau}
If we choose $p_{\Yc}(\yb) := \frac{1}{2}\Vert\yb\Vert_2^2$, then $b_{\Yc}(\yb,\dot{\yb}) = \frac{1}{2}\Vert\yb - \dot{\yb}\Vert_2^2$. 
We can compute $\yb^{\ast}_{\beta}(\ub;\dot{\yb})$ from \eqref{eq:yast_beta} explicitly as $\yb^{\ast}_{\beta}(\ub;\dot{\yb}) = \dot{\yb} + \frac{1}{\beta}(\ub - \cb)$, and $g_{\beta}(\ub;\dot{\yb})$ from \eqref{eq:g_beta} as $g_{\beta}(\ub;\dot{\yb}) = \frac{1}{2\beta}\Vert\ub - \cb\Vert_2^2 + \iprods{\dot{\yb}, \ub - \cb}$.
Hence, $g_{\beta_{k+1}}(\ub;\dot{\yb})  = g_{\beta_k}(\ub;\dot{\yb}) + \frac{(\beta_k - \beta_{k+1})}{\beta_{k+1}\beta_k}\Vert\ub - \cb\Vert_2^2$.
Using this relation into the proof of Lemma~\ref{le:key_estimate_1a} instead of \eqref{eq:convexity_h_beta}, we obtain 
\begin{align*} 
S_{\beta_{k+1}}(\bar{\xb}^{k+1};\dot{\yb}) &+ \frac{\bar{L}_\Ab \tau_k^2}{2\beta_{k+1}} \norm{\xtilde^{k+1} - \xopt}_{\Xc}^2  \leq (1-\tau_k) S_{\beta_k}(\bar{\xb}^k;\dot{\yb})  + \frac{\bar{L}_\Ab \tau_k^2}{2\beta_{k+1}} \norm{\xtilde^{k} - \xopt}_{\Xc}^2 \notag\\
& + \frac{(1-\tau_k)}{2\beta_k\beta_{k+1}}\left[\beta_{k+1} - (1-\tau_k)\beta_k\right] \norm{\nabla{g}_{\beta_{k+1}}(\Ab\bar{\xb}^k; \dot{\yb}) - \dot{\yb}}^2_{\Yc}.
 \end{align*}
Hence, if we choose $\beta_{k+1} = (1-\tau_k)\beta_k$, then, we can remove the last term in the above estimate. 
Combining this rule and the second condition of \eqref{eq:key_condition_1a}, we obtain the update rule \eqref{eq:update_tau_new}.
\Eproof

%%%%%%%%%%%%%%%%%%%%%%%%%%%%%%%%%%%%%%%%%%%%%%

%%%%% A.4. The convergence analysis of the second algorithm.
\subsection{The proof of Lemma~\ref{le:maintain_excessive_gap2}:  Gap reduction in \ref{eq:pd_scheme_2d}}\label{apdx:le:maintain_excessive_gap2}
For simplicity of notation, we denote by $f^{*}_k(y) := f^{\ast}_{\gamma_{k+1}}(-\Ab^{\top} y; \dot{\xb})$ using \eqref{eq:D_gamma}, $\ybar^{\ast}_k := \yb^{\ast}_{\beta_k}(\Ab\xbar^k;\dot{\yb})$, $\xhat^{\ast}_{k\!+\!1} := \xb^{\ast}_{\gamma_{k\!+\!1}}(\yhat^k; \dot{\xb})$ and $\xbar^{\ast}_{k\!+\!1} := \xb^{\ast}_{\gamma_{k\!+\!1}}(\ybar^k; \dot{\xb})$.
By  \eqref{eq:taylor_nabla_h_beta}, $\nabla f^{\ast}_{\gamma}$ is Lipschitz continuous with the Lipschitz constant $L_{f^{\ast}_{\gamma}} := \gamma^{-1}$ and thus $\nabla f^{\ast}_k$ is Lipschitz continuous with the Lipschitz constant $\gamma_{k+1}^{-1} \bar L_\Ab$.

First, using the optimality condition for problem \eqref{eq:F_beta}, we obtain
\vspace{-0.75ex}
\begin{align}\label{eq:lm41_proof1a}
f(\xbar^k) + \iprods{\Ab\xbar^k,\yb} - \beta_kb_{\Yc}(\yb, \dot{\yb}) - g^{\ast}(\yb) \leq P_{\beta_k}(\xbar^k;\dot{\yb}) - (\beta_k/2)\norm{\yb-\ybar^{\ast}_k}_{\Yc}^2.
\vspace{-0.75ex}
\end{align}
Second, using the definition of $f^{\ast}_{\gamma}(\cdot;\dot{\xb})$ in \eqref{eq:D_gamma}, we can show that
\vspace{-1ex}
\begin{align}\label{eq:lm41_proof1b}
\iprods{\Ab\xhat^{\ast}_{k\!+\!1},\yb} + f(\xhat^{\ast}_{k\!+\!1})  &= -\gamma_{k\!+\!1}b_{\Xc}(\xhat^{\ast}_{k\!+\!1}, \dot{\xb}) - f^{\ast}_k(\yhat^k) + \iprods{\Ab\xhat^{\ast}_{k\!+\!1}, \yb - \yhat^k}\notag\\
&=  -\gamma_{k\!+\!1}b_{\Xc}(\xhat^{\ast}_{k\!+\!1}, \dot{\xb}) - f^{\ast}_k(\yhat^k) - \iprods{\nabla f^{\ast}_k(\yhat^k), \yb - \yhat^k}.
\vspace{-1ex}
\end{align}
Third, using \eqref{eq:convexity_h_beta} for $f^{\ast}_{\gamma}$ and  the inequality~\eqref{eq:cocoercivity_nabla_h_beta} of $f^{\ast}_{\gamma_{k\!+\!1}}(\cdot;\dot{\xb})$, we can derive
\vspace{-1ex}
\begin{align}\label{eq:lm41_proof1c}
-D_{\gamma_k}(\ybar^k;\dot{\xb}) &= f^{\ast}_{\gamma_k}(-\Ab^\top \ybar^k;\dot{\xb}) + g^{\ast}(\ybar^k)  \notag \\
&\overset{\tiny\eqref{eq:convexity_h_beta}}{\geq} f^{\ast}_{\gamma_{k+1}}(-\Ab^{\top} \ybar^k; \dot{\xb}) + g^{\ast}(\ybar^k) - (\gamma_k-\gamma_{k\!+\!1})b_{\Xc}(\xbar^{\ast}_{k\!+\!1};\dot{\xb}) \notag\\
&\overset{\tiny\eqref{eq:cocoercivity_nabla_h_beta}}{\geq}  f^{\ast}_{\gamma_{k+1}}(-\Ab^{\top} \yhat^k; \dot{\xb}) + \iprods{\nabla f^{\ast}_{\gamma_{k+1}}(- \Ab^{\top} \yhat^k; \dot{\xb}), \Ab^{\top}(\hat{\yb}^k -  \ybar^k)} \notag \\
&\qquad\qquad + \frac{\gamma_{k\!+\!1}}{2}\norm{\nabla f^{\ast}_{\gamma_{k+1}}(- \Ab^{\top} \ybar^k; \dot{\xb})- \nabla f^{\ast}_{\gamma_{k+1}}(- \Ab^{\top} \yhat^k; \dot{\xb})}^2_{\Xc}\notag\\
&\qquad\qquad + g^{\ast}(\ybar^k) - (\gamma_k-\gamma_{k\!+\!1})b_{\Xc}(\xbar^{\ast}_{k\!+\!1}, \dot{\xb}) \notag \\
& =   f^{\ast}_k(\yhat^k) + \iprods{\nabla f^{\ast}_k(\yhat^k), \ybar^k - \hat{\yb}^k} 
 + \frac{\gamma_{k\!+\!1}}{2}\norm{\bar x^*_{k+1} - \hat x^*_{k+1} }^2_{\Xc}\notag\\
&\qquad\qquad + g^{\ast}(\ybar^k) - (\gamma_k-\gamma_{k\!+\!1})b_{\Xc}(\xbar^{\ast}_{k\!+\!1}, \dot{\xb}).
\end{align}
Then, by the definition of $\ybar^{k\!+\!1}$, we can write
\begin{align}
D_{\gamma_{k\!+\!1}}(\ybar^{k\!+\!1};\dot{\xb}) \!&=\! - g^{\ast}(\ybar^{k+1}) - f^{\ast}_{\gamma_{k+1}}(-\Ab^{\top} \ybar^{k+1}, \dot{\xb}) \notag \\
&\geq - g^{\ast}(\ybar^{k+1}) - f^{\ast}_{k}(\yhat^{k}) - \iprods{\nabla f^{\ast}_k(\yhat^k), \ybar^{k+1} \!-\! \yhat^k}  \!-\!  \frac{\bar L_\Ab}{2\gamma_{k+1}}\norm{\ybar^{k+1} \!-\!\yhat^k}_{\Yc}^2 \notag \\
&= -\min_{\ub\in\Yc}\! \set{\! g^{\ast}(\ub) + f^{\ast}_k(\yhat^k) \!+\! \iprods{\nabla f^{\ast}_k(\yhat^k), \ub \!-\! \yhat^k} \!+\!  \frac{\bar L_\Ab}{2\gamma_{k+1}}\norm{\ub \!-\!\yhat^k}_{\Yc}^2 \!}.{\!\!\!\!\!\!\!\!}\label{eq:lm41_proof1d}
\vspace{-1ex}
\end{align}
Using these relations, the definition of $\xbar^{k\!+\!1}$, and the convexity of $f$, we have
\begin{align*}
P_{\beta_{k\!+\!1}}(\xbar^{k\!+\!1};\dot{\yb}) & = f(\xbar^{k\!+\!1}) + \max_{\yb\in\Yc}\set{\iprods{\Ab\xbar^{k\!+\!1},\yb} - g^{\ast}(\yb) - \beta_{k\!+\!1}b_{\Yc}(\yb, \dot{\yb})} \notag\\
%%%%%%%%%%%%%%%%%%%%
&\overset{\tiny\eqref{eq:pd_condition2}}{\leq} \max_{\yb\in\Yc}\Big\{ (1-\tau_k)\left[f(\xbar^k) + \iprods{\Ab\xbar^k,\yb} - \beta_kb_{\Yc}(\yb, \dot{\yb}) - g^{\ast}(\yb)\right] \notag\\
&\qquad + \tau_k\left[\iprods{\Ab\xhat^{\ast}_{k\!+\!1},\yb} + f(\xhat^{\ast}_{k\!+\!1}) - g^{\ast}(\yb)\right] \Big\}\notag\\
%%%%%%%%%%%%%%%%%%%%
&\hspace{-1.5em}\overset{\tiny\eqref{eq:lm41_proof1a}+\eqref{eq:lm41_proof1b}}{\leq}  (1-\tau_k)P_{\beta_k}(\xbar^k;\dot{\yb}) - \tau_k\gamma_{k\!+\!1}b_{\Xc}(\xhat^{\ast}_{k\!+\!1}, \dot{\xb}) \notag\\
&\qquad - \min_{\yb\in\Yc}\Big\{  \tau_k f^{\ast}_k(\yhat^k) \!+\! \tau_k\iprods{\nabla f^{\ast}_k(\yhat^k), \yb \!-\! \yhat^k}  + \frac{(1 \!-\! \tau_k)\beta_k}{2}\norm{\yb \!-\! \ybar^{\ast}_k}_{\Yc}^2 \!+\! \tau_kg^{\ast}(\yb) \Big\}\notag\\
%%%%%%%%%%%%%%%%%%%%
&\overset{\tiny\eqref{eq:lm41_proof1c}}{\leq} (1-\tau_k)\left[P_{\beta_k}(\xbar^k;\dot{\yb}) - D_{\gamma_k}(\ybar^k;\dot{\xb})\right] - \tau_k\gamma_{k\!+\!1}b_{\Xc}(\xhat^{\ast}_{k\!+\!1}, \dot{\xb})\notag\\
&\qquad -  \frac{(1-\tau_k)\gamma_{k\!+\!1}}{2}\norm{\bar x^*_{k+1} - \hat x^*_{k+1} }^2_{\Xc}  + (1\!-\! \tau_k)(\gamma_k\!-\! \gamma_{k\!+\!1})b_{\Xc}(\xbar^{\ast}_{k\!+\!1}, \dot{\xb})\notag\\
&\qquad- \min_{\yb\in\Yc}\Big\{ f^{\ast}_k(\yhat^k) + \iprods{\nabla f^{\ast}_k(\yhat^k), (1\!-\!\tau_k)\ybar^k \!+\! \tau_k\yb \!-\! \yhat^k}\notag\\
&\quad\qquad\qquad  +  \frac{(1\!-\! \tau_k)\beta_k}{2}\norm{\yb \! -\! \ybar^{\ast}_k}_{\Yc}^2 + g^{\ast}((1-\tau_k)\ybar^k + \tau_k\yb) \Big\}.
\end{align*}
Let us define the auxiliary term $\Tc_k$  as
\begin{align}\label{eq:Tk_term}
\begin{array}{ll}
\Tc_k &:= (1 \!-\! \tau_k)\frac{\gamma_{k\!+\!1}}{2}\norm{\bar x^*_{k+1} - \hat x^*_{k+1} }^2_{\Xc} - (1\!-\!\tau_k)(\gamma_k\!-\!\gamma_{k\!+\!1})b_{\Xc}(\xbar^{\ast}_{k\!+\!1}, \dot{\xb})\vspace{0.75ex}\\
& +  \tau_k\gamma_{k\!+\!1}b_{\Xc}(\xhat^{\ast}_{k\!+\!1}, \dot{\xb}).
\end{array}
\vspace{-1ex}
\end{align}
Now, we consider the change of variable $\ub := (1-\tau_k)\ybar^k + \tau_k\yb$ for $\yb\in\Yc$. Then, $\ub\in\Yc$, and $\ub - \yhat^k = \tau_k(\yb - \ybar^{\ast}_k)$. 
We have
\begin{align}
%%%%%%%%%%%%%%%%%%%%
P_{\beta_{k\!+\!1}}(\xbar^{k\!+\!1};\dot{\yb})  &\leq (1-\tau_k)G_{\gamma_k\beta_k}(\wbar^k;\dot{\wb})  - \Tc_k\notag\\
&\qquad - \min_{\ub\in\Yc}\Big\{ f^{\ast}_k(\yhat^k) \!+\! \iprods{\nabla f^{\ast}_k(\yhat^k), \ub - \yhat^k} +  \frac{(1-\tau_k)\beta_k}{2 \tau_k^2}\norm{\ub -\yhat^k}_{\Yc}^2  + g^{\ast}(\ub) \Big\}\notag\\
&\hspace{-1.5em} \overset{\tiny\eqref{eq:lm41_proof1d}+\eqref{eq:pd_condition2}}{\leq} (1-\tau_k)G_{\gamma_k\beta_k}(\wbar^k;\dot{\wb}) + D_{\gamma_{k\!+\!1}}(\ybar^{k\!+\!1};\dot{\xb}) - \Tc_k, \label{eq:lm41_proof2}
\end{align}
Finally, we estimate  $\Tc_k$ in \eqref{eq:Tk_term} using the strong convexity of $b_\Xc(\cdot, \dot{\xb})$ as follows:
\begin{align}\label{eq:Tk_est}
2\Tc_k & \geq (1 \!-\! \tau_k)\gamma_{k\!+\!1}\Vert \xbar^{\ast}_{k\!+\!1} - \xhat^{\ast}_{k\!+\!1}\Vert_{\Xc}^2 + \tau_k\gamma_{k\!+\!1}\Vert \xhat^{\ast}_{k\!+\!1} - \dot{\xb}\Vert_{\Xc}^2 \notag\\
& - (1\!-\!\tau_k)(\gamma_k\!-\!\gamma_{k\!+\!1})L_{b_{\Xc}}\norm{\xbar^{\ast}_{k\!+\!1} - \dot{\xb}}_{\Xc}^2\notag\\
&\overset{\tiny\eqref{eq:combine_tau}}{\geq} (1-\tau_k)\left[\tau_k\gamma_{k\!+\!1}- (\gamma_k\!-\!\gamma_{k\!+\!1})L_{b_{\Xc}}\right]\norm{\xbar^{\ast}_{k\!+\!1} - \dot{\xb}}_{\Xc}^2\notag\\
&\overset{\tiny\eqref{eq:pd_condition2}}{\geq} 0.
\end{align}
Substituting \eqref{eq:Tk_est} into \eqref{eq:lm41_proof2}, we  get $G_{\gamma_{k\!+\!1}\beta_{k\!+\!1}}(\wbar^{k\!+\!1};\dot{\wb}) \leq (1-\tau_k)G_{\gamma_k\beta_k}(\wbar^k;\dot{\wb})$.

Note that this is valid for all $k \geq 1$. 
Using similar ideas together with the relations $\bar x^1 = \hat x^*_1$ and $\hat y^0 = \bar y^*_0$, we also get
\begin{equation*}
G_{\gamma_1, \beta_1}(\bar{\wb}^1; \dot{\wb}) \leq - \gamma_1 b_\Xc(\bar{\xb}^1, \dot{\xb}) + \frac{\bar L_\Ab}{2 \gamma_1} \norm{\bar{\yb}^{\ast}_1 - \bar{\yb}^{\ast}_0}^2_{\Yc} - \beta_1 b_\Yc(\bar{\yb}_1^{\ast}, \dot{\yb})
\end{equation*}
As $\beta_1\gamma_1 \geq \bar L_A$ and $\bar{\yb}^{\ast}_0 := \dot{\yb}$, we obtain $G_{\gamma_1\beta_1}(\bar{\wb}^1;\dot{\wb}) \leq 0$.

Next, we set the equality in three conditions of \eqref{eq:pd_condition2} to get $\gamma_{k+1} = \gamma_k(1+\tau_k/L_{b_{\Xc}})^{-1}$, $\beta_{k+1} = (1-\tau_k)\beta_k$ and $(1-\tau_k)\beta_k\gamma_{k+1} = \tau_k^2 \bar L_\Ab$. 
In particular, $\gamma_{k+1} \beta_{k+1} = \tau_k^2 \bar L_\Ab$ and thus $\gamma_1 \beta_1 = \bar L_\Ab$. By eliminating $\gamma_k$ and $\beta_k$, we obtain $\tau_k^3/L_{b_{\Xc}} + \tau_k^2 + \tau_{k-1}^2\tau_k - \tau_{k-1}^2 = 0$. 
Hence, similar to the proof of Lemma~\ref{lem:choose_parameters_asgard}, we can show that $\tau_k\in (0, 1)$ is the unique positive solution of the cubic equation $p_3(\tau) := \tau^3/L_{b_{\Xc}} + \tau^2 + \tau_{k-1}^2\tau - \tau_{k-1}^2 = 0$. In addition, $\frac{1}{k+1} \leq \tau_k \leq \frac{2}{k+2}$ for $k\geq 1$ and $\tau_0 = 1$.
If $L_{b_{\Xc}} = 1$, then $\gamma_{k+1} = \frac{\gamma_k}{1 + \tau_k} \leq \frac{\gamma_k(k+1)}{k+2} \leq \frac{2\gamma_1}{k+2}$.
Similarly, $\beta_{k+1} = (1-\tau_k)\beta_k \leq \frac{k}{k+1}\beta_k \leq \frac{\beta_1}{k+1}$.
Finally, we note that $\beta_{k+1} = \frac{\tau_k^2\bar L_\Ab}{\gamma_{k+1}} \geq \frac{\bar L_\Ab}{(k+1)^2}\frac{k+2}{2\gamma_1} \geq \frac{\bar L_\Ab}{2\gamma_1(k+1)}$.
\Eproof
%%% End of the proof.

%%%% A.4. Proof of Corollary 2.1.
\subsection{The proof of Proposition \ref{th:convergence_A2}: The accelerated augmented Lagrangian method}\label{apdx:th:convergence_A2}
First of all, with the choice of norm associated with the Lagrangian smoother, we have
\vspace{-0.5ex}
\begin{equation*}
\bar{L}_{\Ab} := \norm{\Ab}^2 = \max_{\xb \in \R^n} \set{ \frac{\norm{\Ab\xb}_{\Yc, *}^2}{\norm{\xb}_\Xc^2} } = \max_{\xb \in \R^n} \set{ \frac{\norm{\Ab\xb}_{\Yc,\ast}^2}{\norm{\Ab \xb}_{\Yc,\ast}^2} } = 1.
\vspace{-0.5ex}
\end{equation*}
Next, note that the conclusions of Lemma~\ref{le:maintain_excessive_gap2}
are valid for any semi-norm. In particular,
if we choose $\beta_1 \gamma_0 \geq \bar L_\Ab = 1$,  
\vspace{-0.5ex}
\begin{equation*}
\gamma_{k+1} = \gamma_0 \geq \frac{\gamma_0}{1 + \tau_k / L_{b_\Xc}} ,\quad  \beta_{k+1} = (1-\tau_k) \beta_k, \quad\text{ and }\quad \frac{\bar L_\Ab}{\gamma_0} = \frac{(1-\tau_k)\beta_k}{\tau_k},
\vspace{-0.5ex}
\end{equation*}
then $G_{\gamma_0, \beta_{k+1}}(\bar{\wb}^{k+1}; \dot{\wb}) \leq (1-\tau_k) G_{\gamma_0, \beta_{k}}(\bar{\wb}^{k}; \dot{\wb}) \leq 0$.

Eliminating $\beta_{k+1}$ and $\beta_k$ in these equalities, we get $\frac{\tau_{k+1}^2}{1 - \tau_{k+1}} = \tau_{k}^2$.
One can easily check by induction that $\beta_{k} =\beta_1 \prod_{l=1}^k (1-\tau_l) = \beta_1\frac{\tau_k^2}{\tau_0^2} = \frac{\tau_k^2}{\gamma_0}$ and $\tau_k \leq \frac{2}{k+2}$.
We then conclude using Lemma~\ref{le:excessive_gap_aug_Lag_func} and the fact that  $b_\Xc(\xopt, \dot x) = 0$ that
\begin{equation*}
S_{\beta_k}(\bar{\xb}^k;\dot{\yb}) \leq G_{\gamma_0\beta_k}(\bar{\wb}^k; \dot{\wb}) \leq 0,
\vspace{-1.5ex}
\end{equation*}
\begin{align*}
\norm{\Ab \bar{\xb}^k - \cb}_{\Yc, *} &\leq  \beta_k L_{b_{\Yc}} \Big[ \norm{\yopt - \dot{\yb}}_{\Yc}  + \big(\norm{\yopt - \dot{\yb}}_{\Yc}^2 + 2L_{b_{\Yc}}^{-1}\beta_k^{-1}S_{\beta_k}(\bar{\xb}^k; \dot{\yb}) \big)^{1/2}\Big] \leq \frac{8 L_{b_\Yc}\norm{\yopt - \dot{\yb}}_{\Yc}}{\gamma_0(k+2)^2},
\end{align*}
and
\begin{align*}
f(\bar \xb^k) - \fopt &\leq S_{\beta_k}(\bar \xb^k;\dot{\yb}) -\iprods{\yopt, \Ab\bar \xb^k -\cb} + \beta_k b_{\Yc}(\yopt, \dot{\yb}) \\
& \leq \norm{\yopt}_\Yc \norm{\Ab\bar \xb^k -\cb}_{\Yc, *} + \beta_k b_{\Yc}(\yopt, \dot{\yb}) \leq \frac{8 L_{b_\Yc}\norm{\yopt}_\Yc \norm{\yopt - \dot{\yb}}_{\Yc}+ 4b_{\Yc}(\yopt, \dot{\yb})}{\gamma_0 (k+2)^2}, \\
f(\bar \xb^k) - \fopt &\geq -\Vert\yopt\Vert_{\Yc}\Vert\Ab\xb - \cb\Vert_{\Yc, *} \geq 
- \frac{8 L_{b_\Yc}\norm{\yopt}_\Yc\norm{\yopt - \dot{\yb}}_{\Yc}}{\gamma_0 (k+2)^2}.
\end{align*}
The proposition is proved.
\Eproof
%% End of the proof.

% Proof of Corollary 2.1.
\subsection{The proof of Proposition \ref{co:strong_convex_convergence}: The strongly convex objective case}\label{apdx:co:strong_convex_convergence}
The proof follows the same arguments as the proof of Lemma~\ref{le:maintain_excessive_gap2}.
We only need to replace the Lipschitz continuity coefficient $\frac{\bar{L}_{\Ab}}{\gamma_{k+1}}$ by $L_{f^{*}_{\Ab}} = \frac{\bar{L}_{\Ab}}{\mu_f}$ in \eqref{eq:lm41_proof1d} and replace all other occurrences of $\gamma_{k+1}$ by zero.
Under a choice of parameters satisfying \eqref{eq:update_tau_beta7_muf}, we obtain the gap reduction condition 
\begin{equation*}
G_{0,\beta_{k+1}}(\wbar^{k+1}; \dot{\wb}) \leq (1-\tau_k)G_{0, \beta_k}(\wbar^k; \dot{\wb})  \leq 0,
\end{equation*} 
as in Lemma \ref{le:maintain_excessive_gap2}.
We can also check by induction that $\beta_k \leq \frac{4}{(k+2)^2} \frac{\bar{L}_{\Ab}}{\mu_f}$. 
Hence, we obtain the conclusion of Proposition~\ref{co:strong_convex_convergence} by using Lemma \ref{le:excessive_gap_aug_Lag_func}.
\Eproof
%% End of the proof.
%%% END ...

%%%%%%%%%%%%%%%%%%%%%%%%%%%%%%%%%%%%%%
%+ References.
%%%%%%%%%%%%%%%%%%%%%%%%%%%%%%%%%%%%%%
\bibliographystyle{siamplain}
%\bibliography{/Users/quoctd/Dropbox/E-Books/tran_bibtex_new}
%\bibliography{tran_bibtex_new}

\end{document}